\newtheorem{thm}{Theorem} [section]
\newtheorem{lem}[thm]{Lemma}
\newtheorem{exm}[thm]{Example}
\theoremstyle{definition}
\newtheorem{defn}[thm]{Definition} 
\theoremstyle{remark}
\numberwithin{equation}{section}
\begin{document}
\title[On Igusa local zeta functions of Hauser hybrid polynomials]
{On Igusa local zeta functions of Hauser hybrid polynomials}%
\author[S.F. Hong]{Shaofang Hong}
\address{Mathematical College, Sichuan University, Chengdu 610064, P.R. China}
\email{sfhong@scu.edu.cn, s-f.hong@tom.com, hongsf02@yahoo.com }
\author[Q.Y. Yin]{Qiuyu Yin}
%    Address of rec{\rm ord} for the research reported here
\address{Mathematical College, Sichuan University, Chengdu 610064, P.R. China}
\address{Current address: Center for Combinatorics, LPMC-TJKLC, Nankai University,
Tianjin 300071, P.R. China}
\email{yinqiuyu26@126.com}
\thanks{S.F. Hong is the corresponding author and was supported partially
by National Science Foundation of China Grant \#11771304.}
 
%\subjclass{Primary 11T22,11R18}%
\keywords{Igusa local zeta functions, hybrid polynomial,
Newton Polyhedra, $\pi$-adic stationary phase formula}
\subjclass[2000]{Primary 11S40, 14G10, 11S80}
\date{\today}%
%\dedicatory{}%
%\commby{}%
% ----------------------------------------------------------------
\begin{abstract}
Let $K$ be a local field and $f(x)\in K[x]$ be a non-constant
polynomial. When ${\rm char}K=0$, Igusa showed the local zeta
function is a rational function. However, when ${\rm char}K>0$,
the rationality of the local zeta function is unknown in general.
In this paper, we study the local zeta functions for the so-called
hybrid polynomials in three variables with coefficients in a
non-archimedean local field of positive characteristic. These
hybrid polynomials were first introduced by Hauser in 2003 to
study the resolution of singularities in positive characteristic.
We establish the rationality theorem for these local zeta functions 
and list explicitly all the candidate poles. Our result generalizes 
the work of Le$\acute{o}$n-Cardenal, Ibadula and Segers and that
of Yin and Hong.
\end{abstract}

\maketitle

\section{\bf Introduction and the main results}
Let $K$ be a local field and $f(x)\in K[x]$ be a non-constant
polynomial. The local zeta function of $f$ was first introduced
by Weil \cite{[W]}. Later on, in the 1970's, Igusa \cite{[Igu1], [Igu2]}
systematically investigated this local zeta function. Lots of basic
results about local zeta functions was archived by Igusa. Meanwhile,
Igusa also made several conjectures that inspired many further
research in the past decades. In this paper, our main goal
is to study the rationality of Igusa local zeta function. For
this purpose, we first present the definition of the local zeta function.

Now let $K$ be a non-archimedean local field with $\mathcal{O}_K$ as
its ring of integers, that is, $\mathcal{O}_K$ is the maximal compact
subring of $K$. Let $\mathcal{O}_K^{\times}$ be the group of units of
$\mathcal{O}_K$ and let $\mathcal{P}_K$ be the unique maximal ideal
of $\mathcal{O}_K$. We fix an element $\pi\in K$ such that
$\mathcal{P}_K=\pi\mathcal{O}_K$. Let
$\mathbb{F}_q\cong\mathcal{O}_K/\mathcal{P}_K$
be the residue field of $K$ which is the finite field with
$q={\rm Card}(\mathcal{O}_K/\mathcal{P}_K)$ elements.
For $x\in K$, we define the {\it valuation} of $x$, denoted by
${\rm ord}(x)$, to be such that ${\rm ord}(\pi)=1$. We define
${\rm ord}(0):=\infty$. Then $x$ can be written uniquely as
$ac(x)\pi^{{\rm ord}(x)}$, where $ac(x)\in\mathcal{O}_K^{\times}$
is called the {\it angular component} of $x$. Moreover, let
$|x|_K:=|x|=q^{-{\rm ord}(x)}$ be its {\it absolute value},
and let $|dx|$ be the Haar measure on $K^n$ such that the
measure of $\mathcal{O}_K^{n}$ is one. Let
$f(x)\in \mathcal{O}_K[x]$ with $x=(x_1, \cdots, x_n)$ be a
non-constant polynomial and let
$\chi: \mathcal{O}_K^{\times}\rightarrow \mathbb{C}^{\times}$
be any given multiplicative character of $\mathcal{O}_K^{\times}$.
We put $\chi(0):=0$. Then for any $s\in\mathbb{C}$ with ${\rm Re}(s)>0$,
the {\it Igusa's local zeta function} $Z_f(s, \chi)$ is defined as
$$Z_f(s, \chi):=\int_{\mathcal{O}_K^{n}}{\chi(ac f(x))|f(x)|^s|dx|}.$$

When ${\rm char}(K)=0$, that is, $K$ is a finite extension of the
$p$-adic field $\mathbb{Q}_p$, Igusa \cite{[Igu1]} \cite{[Igu2]} proved
that $Z_f(s, \chi)$ is a rational function of $q^{-s}$ by using the
resolution of singularities. On the other hand, let
$$N_i:=\#\{x\in (\mathcal{O}_K/\mathcal{P}_K^i)^{n}
|f(x)\equiv 0 \pmod{\mathcal{P}_K^i}\},$$
and set $N_0:=1$. We denote the {\it Poincar$\acute{e}$ series} of $f$ by
$$P(t):=\sum_{i=0}^{\infty}N_i(q^{-n}t)^i.$$
Then (see, for example \cite{[M]}):
$$P(q^{-s})=\dfrac{1-q^{-s}Z_f(s, \chi_{{\rm triv}})}{1-q^{-s}}$$
with $\chi_{{\rm triv}}$ being the trivial character.
Thus, as a consequence of Igusa's rationality theorem, one
knows that $P(q^{-s})$ is a rational function of $q^{-s}$. A proof
of the rationality of $P(q^{-s})$ without using resolution of
singularities was given by Denef \cite{[De1]}. Actually, he used
a deep result for the $p$-adic fields what he called $p$-adic
cell decomposition. Furthermore, using this relation, Segers
\cite{[S]} gave the lower bound for the poles of Igusa's local
zeta functions.

However, when ${\rm char}(K)>0$, the question of rationality
becomes much more difficult because the techniques used by Igusa
and Denef are not available in this case. In particular, for any
$f(x)\in\mathcal{O}_K[x]$, the rationality of $Z_f(s, \chi)$
is still kept open. In fact, there are few known results up to now.
For example, Z$\acute{u}$$\tilde{n}$iga-Galindo \cite{[ZG1]} proved that
$Z_f(s, \chi_{{\rm triv}})$ is a rational function of $q^{-s}$
when $f$ is a semiquasihomogeneous polynomial with an absolutely
algebraically isolated singularity at the origin. The basic tool he used
is called the {\it $\pi$-adic stationary phase formula}, which was first
introduced by Igusa \cite{[Igu3]}. For more progress on the rationality
of Igusa local zeta function, the readers are referred to \cite{[M]}.

In this paper, we study the local zeta functions for a class of
hybrid polynomials for the case of positive characteristic. We
establish  the rationality of the local zeta functions for these
hybrid polynomials and also obtain all the candidate poles.
From now on, we let $K$ be a non-archimedean local field of
characteristic $p$ with $p$ being a fixed prime number. As usual,
for any $x\in\mathbb{R}$,
let $\lfloor x\rfloor$ be the largest integer which is no more than $x$.
First of all, we give the definition of hybrid polynomial in three variables.
\begin{defn}\label{defn 1.1}
Let $k$, $r$ and $l$ be positive integers with
$p\nmid rl$, $p\mid (r+l+k)$ and $\bar{r}+\bar{l}\leq p$, where $\bar{r}$ and
$\bar{l}$ denote the residues of $r$ and $l$ modulo $p$. If $t$ is an arbitrary
constant from the ground field $K$, then the polynomial
\begin{align}\label{1.1}
f(x, y, z)=&x^p+y^rz^l\sum_{i=0}^{k}{\dbinom{k+r}{i+r}y^i(tz-y)^{k-i}}\nonumber\\
:=&x^p+y^rz^l\mathbb{H}_r^k(y, tz-y)
\end{align}
is called a {\it hybrid polynomial}, where
$\mathbb{H}_r^k(y, w)=\sum\limits_{i=0}^{k}{\binom{k+r}{i+r}y^iw^{k-i}}$.
\end{defn}

Hybrid polynomials were first introduced by Hauser (see \cite{[Ha1]}
or \cite{[Ha2]}) to study the resolution of singularities in positive
characteristic. With $f(x,y,z)$ being a hybrid polynomial with $r=1$
and $t^{(k+1)/p}\in\mathcal{O}_K^{\times}$, Le$\acute{o}$n-Cardenal,
Ibadula and Segers \cite{[LIS]} proved that $Z_f(s, \chi)$ is a rational
function of $q^{-s}$ with only three candidate poles when $l=1$. Later on,
for any positive integer $l$, Yin and Hong \cite{[YH]} showed that
$Z_f(s, \chi)$ is a rational function of $q^{-s}$ with only four
possible poles of $Z_f(s, \chi)$.

In the present paper, we extend the works presented in \cite{[LIS]} and
\cite{[YH]} to the general case. In other words, we study the local
zeta function for arbitrary hybrid polynomial with only a natural
assumption on the constant $t$. Before considering the local zeta
function of hybrid polynomials, we first study a class of two-variable
polynomials of the following form:
\begin{equation}\label{eqno 1.2}
g(u,v):=\sum_{i=i_0}^{\infty}\alpha_iu^i
+\sum_{j=j_0}^{\infty}\beta_jv^j\in\mathcal{O}_K[u, v],
\end{equation}
where $i_0, j_0$ are positive integers with $\gcd(i_0, j_0)=1$,
and $\alpha_i, \beta_j\in\mathcal{O}_K$ satisfying that
$\alpha_{i_0}\beta_{j_0}\ne 0$ and for all integers $i$ and $j$ with
$i\ge i_0+1$, $j\ge j_0+1$, ${\rm ord}(\alpha_i)>{\rm ord}(\alpha_{i_0})$
and ${\rm ord}(\beta_j)>{\rm ord}(\beta_{j_0})$.
Notice that the last condition implies that
${\rm ord}(\alpha_i)\ge 1$ and ${\rm ord}(\beta_j)\ge 1$ since
${\rm ord}(\alpha_{i_0})\ge 0$ and ${\rm ord}(\beta_{j_0})\ge 0$,
i.e., $\pi\mid \alpha_i$ and $\pi\mid \beta_j$.
We study the local zeta function of $g$ and prove
its rationality. That is, we have the following result.

%*******************************Theorem 1.2*******************************
\begin{thm}\label{thm 1.1}
Let $g(u, v)$ be a polynomial of the form (\ref{eqno 1.2}).
Then the local zeta function $Z_g(s, \chi)$
is a rational function of $q^{-s}$. Moreover, we have
$$Z_g(s, \chi)=\dfrac{G(q^{-s})}{(1-q^{-1-s})(1-q^{-i_0-j_0-i_0j_0s})},$$
where $G(x)\in\mathbb{C}[x]$.

\end{thm}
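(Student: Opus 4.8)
The plan is to compute $Z_g(s,\chi)$ by decomposing the domain of integration $\mathcal{O}_K^2$ according to the $\pi$-adic valuations of the two coordinates, which is the standard strategy behind the $\pi$-adic stationary phase formula (SPF). First I would split $\mathcal{O}_K^2 = (\mathcal{O}_K^\times)^2 \cup (\mathcal{O}_K^\times \times \mathcal{P}_K) \cup (\mathcal{P}_K \times \mathcal{O}_K^\times) \cup \mathcal{P}_K^2$. On $(\mathcal{O}_K^\times)^2$, the leading monomials $\alpha_{i_0}u^{i_0}$ and $\beta_{j_0}v^{j_0}$ dominate (recall $\pi \mid \alpha_i, \beta_j$ for $i>i_0$, $j>j_0$, while $\alpha_{i_0}\beta_{j_0}\neq 0$), so modulo $\mathcal{P}_K$ one has $g \equiv \bar\alpha_{i_0}\bar u^{i_0} + \bar\beta_{j_0}\bar v^{j_0}$ over the residue field; here I would invoke the SPF to reduce the local integral over this piece to a finite exponential sum over $\mathbb{F}_q$, which contributes a factor that is a polynomial in $q^{-s}$ (no pole, since $|g|=1$ on the non-vanishing locus and the vanishing locus, being a plane curve, contributes via Hensel-lifted smooth points only a $(1-q^{-1-s})^{-1}$-type denominator at worst). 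On the two mixed pieces $\mathcal{O}_K^\times \times \mathcal{P}_K$ and $\mathcal{P}_K \times \mathcal{O}_K^\times$, say the first, write $v = \pi v'$ with $v' \in \mathcal{O}_K$: then the $\sum \beta_j v^j$ part becomes divisible by $\pi$ and hence subsumed into the error terms, while $u \in \mathcal{O}_K^\times$ forces $g$ to be a unit, so these pieces give only polynomial (pole-free) contributions after the obvious change of variables.

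The essential recursion lives on the piece $\mathcal{P}_K^2$. There I would substitute $u = \pi u_1$, $v = \pi v_1$ with $(u_1,v_1) \in \mathcal{O}_K^2$, which pulls out a factor $q^{-2}$ from $|du\,dv|$ and transforms $g(u,v)$ into $g_1(u_1,v_1) = \pi^{i_0}\alpha_{i_0}u_1^{i_0} + \cdots + \pi^{j_0}\beta_{j_0}v_1^{j_0} + (\text{higher})$. The key algebraic observation is that after factoring out $\pi^{\min(i_0,j_0)}$ (or handling it via the angular component if $i_0 \ne j_0$), the resulting polynomial is again of the form \eqref{eqno 1.2} with the \emph{same} $i_0, j_0$ — the hypotheses $\gcd(i_0,j_0)=1$ and the strict-inequality conditions on the orders are preserved, since multiplying each $\alpha_i$ by $\pi^i$ and each $\beta_j$ by $\pi^j$ only increases all orders and keeps $\mathrm{ord}(\pi^{i_0}\alpha_{i_0}) < \mathrm{ord}(\pi^i\alpha_i)$, etc. This self-similarity produces a functional equation of the shape
\begin{equation*}
Z_g(s,\chi) = P(q^{-s}) + q^{-2-(i_0 \wedge j_0)s\cdot(\text{appropriate scaling})}\, Z_{g_1}(s,\chi),
\end{equation*}
where $P$ is a polynomial in $q^{-s}$; iterating and summing the resulting geometric series in the variable $q^{-i_0-j_0-i_0 j_0 s}$ yields exactly the claimed denominator. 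The bookkeeping of the exponent is where one must be careful: when $i_0 \ne j_0$ the two monomials acquire different $\pi$-powers and one must track how the minimum interacts with the recursion so that the accumulated exponent in the geometric ratio is precisely $i_0 + j_0 + i_0 j_0 s$ (this is the Newton-polyhedron contribution of the single compact facet of $g$, whose supporting vector gives the pole candidate $s = -(i_0+j_0)/(i_0 j_0)$).

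I expect the main obstacle to be twofold. First, justifying that the "higher order" terms $\sum_{i>i_0}\alpha_i u^i$ and $\sum_{j>j_0}\beta_j v^j$ genuinely do not affect the residue-field reductions nor the recursion — one needs the strict order inequalities precisely here, and must argue uniformly across all iterations of the recursion that the leading behavior is never destroyed; this is a convergence/domination argument in the $\pi$-adic topology rather than a single computation. Second, the SPF as stated classically applies to polynomials, and on $\mathcal{P}_K^2$ we are iterating indefinitely, so one should either truncate the series at a controlled order (valid since $g$ is, by hypothesis, a polynomial in $\mathcal{O}_K[u,v]$, so the sums are actually finite) or set up the recursion directly and prove convergence of $Z_g$ as a function of $q^{-s}$ on $\mathrm{Re}(s) > 0$, then extend by rationality. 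Once these two points are secured, assembling the pieces and identifying the denominator $(1-q^{-1-s})(1-q^{-i_0-j_0-i_0 j_0 s})$ is routine.
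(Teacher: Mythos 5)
Your strategy (split by valuations, apply the stationary phase formula, and recurse) belongs to the right family, but the central step fails as stated. The substitution $u=\pi u_1,\ v=\pi v_1$ on $\mathcal{P}_K^2$ is \emph{not} self-similar for $g$ of the form (\ref{eqno 1.2}) unless $i_0=j_0$: it multiplies $\alpha_i$ by $\pi^i$ and $\beta_j$ by $\pi^j$, so after $n$ iterations the two leading valuations are ${\rm ord}(\alpha_{i_0})+ni_0$ and ${\rm ord}(\beta_{j_0})+nj_0$, whose gap grows linearly in $n$; after factoring out the minimal power of $\pi$ you never return to (a unit multiple of) the same polynomial, the recursion does not close up, and in particular it does not produce a geometric series with ratio $q^{-i_0-j_0-i_0j_0s}$. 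That ratio is the self-similarity factor of the \emph{weighted} substitution $(u,v)\mapsto(\pi^{j_0}u,\pi^{i_0}v)$ attached to the compact facet of the Newton polygon: then $g(\pi^{j_0}u,\pi^{i_0}v)=\pi^{i_0j_0}\cdot g^{\ast}(u,v)$ with $g^{\ast}$ again of the form (\ref{eqno 1.2}) with the same leading data, and the measure contributes $q^{-i_0-j_0}$. To exploit this one must partition $\mathbb{N}^2$ into the cones of $\Gamma(g)$ (this is what the paper does via Lemmas \ref{lem 2.4} and \ref{lem 2.5}, the factor $1-q^{-i_0-j_0-i_0j_0s}$ coming from the sums over the cones containing the direction $(j_0,i_0)$), not scale diagonally. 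A concrete test: for $g=u^2+v^3$ your diagonal recursion produces $u^2+\pi^n v^3$ at depth $n$, never self-similar, whereas $(u,v)\mapsto(\pi^3u,\pi^2v)$ yields the factor $q^{-5-6s}$ and the pole with real part $-5/6$. With your organization, the pieces you must sum over $n$ contain unboundedly many distinct powers $q^{-({\rm ord}(\alpha_{i_0})+ni_0)s}$ and $q^{-({\rm ord}(\beta_{j_0})+nj_0)s}$, and showing their total is rational with exactly the stated denominator is the actual content of the theorem, not the ``routine'' bookkeeping you defer.

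Two subsidiary claims also need repair, because the hypotheses in (\ref{eqno 1.2}) allow ${\rm ord}(\alpha_{i_0})>0$ and ${\rm ord}(\beta_{j_0})>0$ (and such coefficients inevitably arise in the iterates of any recursion). On $\mathcal{O}_K^{\times}\times\mathcal{P}_K$ it is then false that $g$ is forced to be a unit: the two groups of terms can have equal valuation and cancel, producing a genuine zero locus and a $(1-q^{-1-s})^{-1}$ contribution (this is visible in the paper's treatment of the cone $\Delta_{\gamma_1}$, where the terms with $1\le a\le d_1$ carry that denominator). Likewise on $(\mathcal{O}_K^{\times})^2$ the reduction $\bar g$ may be identically zero, so the stationary phase formula cannot be applied before normalizing by $\pi^{\min({\rm ord}(\alpha_{i_0}),{\rm ord}(\beta_{j_0}))}$, and one must then justify discarding the higher-order terms after normalization and control the resulting infinite tails. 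The paper secures exactly these points with Lemma \ref{lem 2.8}, which rests on Z\'u\~niga-Galindo's comparison result (Lemma \ref{lem 2.7}) and a convergence estimate $|c(\chi)|_{\infty}\le 1$ for the tail sums; some input of this kind is indispensable and is absent from your sketch.
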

Using Theorem \ref{thm 1.1}, we prove the rationality of local zeta
functions for the hybrid polynomial and provide all the candidate poles as follows.

%*******************************Theorem 1.3*******************************
\begin{thm}\label{thm 1.2}
Let $f(x, y, z)=x^p+y^rz^l\mathbb{H}_r^k(y, tz-y)$ be a hybrid polynomial with
$t^{1/p}\in \mathcal{O}_K^{\times}$. Then the local zeta function
$Z_f(s, \chi)$ is a rational function of $q^{-s}$. Furthermore, 
each of the following is true:

{\rm (1)} If $p\mid\binom{k+r}{r}$, then
\begin{align*}
Z_f(s, \chi)={\left\{\begin{array}{rl}
\dfrac{1-q^{-1}}{1-q^{-1-ps}}, \ \ \ \ &{\rm if} \ \chi^p=\chi_{{\rm triv}},\\
0,\ \ \ \ &{\rm otherwise}.
\end{array}\right.}
\end{align*}

{\rm (2)} If $p\nmid\binom{k+r}{r}$, then
$$
Z_f(s, \chi)=\dfrac{F(q^{-s})}{(1-q^{-1-s})(1-q^{-((k+l+r)/p+2)-(k+l+r)s})
\prod\limits_{a\in \{r, l, k+1\}}(1-q^{-p-a-pas})},
$$
where $F(x)\in\mathbb{C}[x]$.
\end{thm}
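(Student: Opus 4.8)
The plan is to reduce the computation of $Z_f(s,\chi)$ to an application of Theorem~\ref{thm 1.1} together with the $\pi$-adic stationary phase formula (SPF), treating the two cases on $p\mid\binom{k+r}{r}$ separately. First I would write $Z_f(s,\chi)=\int_{\mathcal{O}_K^3}\chi(ac\,f)|f|^s\,|dx|$ and split the domain $\mathcal{O}_K^3$ according to the reduction mod $\mathcal{P}_K$ of the variables, i.e. by the residue of $(x,y,z)$ in $\mathbb{F}_q^3$. The key structural fact is that $f(x,y,z)=x^p+y^rz^l\mathbb{H}_r^k(y,tz-y)$ reduces mod $\mathcal{P}_K$ to $\bar{x}^p+\bar{y}^r\bar{z}^l\,\overline{\mathbb{H}_r^k}(\bar{y},\bar{t}\bar{z}-\bar{y})$, and by Hauser's identity (the binomial identity behind the definition of $\mathbb{H}_r^k$) one has $y^rw^k\mathbb{H}_r^k(y,w) \equiv$ a product/expansion that, under $p\mid(r+l+k)$ and $\bar r+\bar l\le p$, lets $f$ be rewritten, after the substitution $w=tz-y$, in a form where the mod-$p$ part collapses to a $p$-th power plus lower-order terms. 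Concretely, when $p\mid\binom{k+r}{r}$ every coefficient $\binom{k+r}{i+r}$ relevant to the leading behaviour is divisible by $p$ hence lies in $\mathcal{P}_K$ (since $\mathrm{char}\,K=p$ means $p=0$ there — more precisely the integer coefficients land in $\mathbb{F}_p\subset\mathbb{F}_q$ and vanish), so $f\equiv x^p\pmod{\mathcal{P}_K}$ as a polynomial; then $Z_f$ decouples: the $y,z$ integrations are free, contributing the measure $1$, and one is left with $\int_{\mathcal{O}_K}\chi(ac(x^p))|x|^{ps}|dx|$, which is an elementary one-variable Igusa integral equal to $\frac{1-q^{-1}}{1-q^{-1-ps}}$ when $\chi^p=\chi_{\mathrm{triv}}$ and $0$ otherwise. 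That settles part~(1).

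For part~(2), with $p\nmid\binom{k+r}{r}$, I would stratify $\mathcal{O}_K^3$ by where $y$ and $z$ are units or lie in $\mathcal{P}_K$. On the open stratum $y\in\mathcal{O}_K^\times$ (resp. $z\in\mathcal{O}_K^\times$), the factor $y^r$ (resp. $z^l$) is a unit and one can apply SPF or a direct change of variables: the singular locus of $\bar f$ forces $\bar x=0$, and on each such piece $f$ looks, after dividing by the unit, like $x^p + (\text{unit})\cdot y^{a}$ or $x^p+(\text{unit})\cdot z^{a}$-type behaviour along the relevant coordinate hyperplane, producing poles among $1-q^{-1-s}$ and $1-q^{-p-a-pas}$ for $a\in\{r,l,k+1\}$ (the exponent $k+1$ coming from the top-degree term $\binom{k+r}{r}y^k$ inside $\mathbb{H}_r^k$, which after multiplication by $y^r$ contributes $y^{k+r}$-type valuation data, and similarly from the $w=tz-y$ expansion). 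On the deepest stratum $y,z\in\mathcal{P}_K$ one rescales $y=\pi u'$, $z=\pi v'$ and, using the hybrid identity together with $t^{1/p}\in\mathcal{O}_K^\times$, recognizes the resulting inner integral as (a unit times) a $Z_g$-type integral for a two-variable $g(u,v)$ of the form \eqref{eqno 1.2}, with $(i_0,j_0)$ determined by the minimal valuations appearing — these turn out to force $i_0 j_0$ and $i_0+j_0$ to match the exponents in the claimed pole $1-q^{-((k+l+r)/p+2)-(k+l+r)s}$ (note $(k+l+r)/p$ is an integer precisely because $p\mid(r+l+k)$). Assembling the strata and clearing the finitely many common denominators gives the stated rational form with numerator $F(x)\in\mathbb{C}[x]$.

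The main technical obstacle I anticipate is the analysis on the deepest stratum $y,z\in\mathcal{P}_K$: one must show that after the substitution $y=\pi u'$, $z=\pi v'$ and absorbing the unit $t^{1/p}$, the polynomial $f$ genuinely acquires the shape $x^p + (\text{something of the form }\eqref{eqno 1.2}\text{ in }u',v')$ — this requires the Hauser binomial identity to collapse the $\mathbb{H}_r^k$-sum modulo $p$-th powers so that the $x^p$ can be merged with a $p$-th power coming from the $y^rz^l$-part (this is where $\bar r+\bar l\le p$ and $p\mid(r+l+k)$ and $t^{1/p}\in\mathcal{O}_K^\times$ all get used: to write $y^rz^l\mathbb{H}_r^k(y,tz-y) = (\text{$p$-th power}) + (\text{strictly higher valuation remainder})$ so that the change of variable $x\mapsto x - (\text{$p$-th root})$ works in characteristic $p$). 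Verifying that the leading exponents $(i_0,j_0)$ of the resulting $g$ are coprime and equal to the right combinations of $r,l,k$ so that Theorem~\ref{thm 1.1}'s denominator $(1-q^{-1-s})(1-q^{-i_0-j_0-i_0 j_0 s})$ produces exactly the advertised poles is the crux; it is essentially a careful bookkeeping of Newton-polyhedron data for the hybrid polynomial.

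For completeness I should note that one also needs to handle the mixed strata (one of $y,z$ a unit, the other in $\mathcal{P}_K$) and, within the unit strata, to further localize around the zeros of the reduced one-variable factor of $\bar f$; in each case SPF reduces the count to residue-field point counts plus a recursion that terminates, with all newly produced poles already on the advertised list. Summing the geometric contributions and using that a finite sum of rational functions sharing the denominators $(1-q^{-1-s})$, $(1-q^{-((k+l+r)/p+2)-(k+l+r)s})$ and $\prod_{a\in\{r,l,k+1\}}(1-q^{-p-a-pas})$ is again such a rational function completes the proof.
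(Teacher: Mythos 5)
Your proposal has two genuine gaps, one in each case. For case (1), your central claim --- that $p\mid\binom{k+r}{r}$ forces every relevant coefficient $\binom{k+r}{i+r}$ (equivalently every coefficient in the expansion (\ref{eqno 2.1})) to vanish in characteristic $p$, so that $f\equiv x^p$ and the $y,z$-integration is trivial --- is false. Writing the coefficients as $a_{k,r}(i)=\binom{k+r}{k-i}\binom{i+r-1}{i}=\frac{r}{r+i}\binom{k}{i}\binom{k+r}{r}$, the factor $\frac{r}{r+i}$ can cancel the $p$ in $\binom{k+r}{r}$: in the paper's Example \ref{exm 5.1} ($p=3$, $k=6$, $r=4$, $l=2$) one has $3\mid\binom{10}{4}$ yet $f(x,y,z)=x^3+y^9z^3\neq x^3$. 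The correct mechanism is the one the paper uses: whenever $p\nmid a_{k,r}(i)$ one necessarily has $p\mid(r+i)$, hence also $p\mid(k+l-i)$ and $p\mid(k-i)$-type divisibility of all exponents (using $p\mid(k+r+l)$ and $t^{1/p}\in\mathcal{O}_K^\times$), so the surviving terms assemble into a $p$-th power $h(y,z)^p$; then the translation $x\mapsto x-h(y,z)$, not a decoupling, reduces $Z_f$ to $\int_{\mathcal{O}_K}\chi(ac(x^p))|x^p|^s|dx|$. (Note also that even a congruence $f\equiv x^p\pmod{\mathcal{P}_K}$ would not by itself let you drop the $y,z$-part, since those terms dominate $|f|$ on the region $x\in\mathcal{P}_K$.)

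For case (2), the source of the factor $1-q^{-((k+l+r)/p+2)-(k+l+r)s}$ cannot be Theorem \ref{thm 1.1} applied to a two-variable polynomial on the stratum $y,z\in\mathcal{P}_K$, as you propose: matching that factor to a denominator $1-q^{-i_0-j_0-i_0j_0s}$ would require positive integers with $i_0+j_0=(k+r+l)/p+2$ and $i_0j_0=k+r+l$, which generically do not exist (in Example \ref{exm 5.2}, $p=5$, $k=5$, $r=2$, $l=3$, one would need $i_0+j_0=4$, $i_0j_0=10$). Moreover your proposed device there --- splitting off a $p$-th root of the $y^rz^l\mathbb{H}_r^k$-part and translating $x$ --- is exactly the case-(1) phenomenon and is unavailable when $p\nmid\binom{k+r}{r}$. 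What the deepest stratum actually requires is the quasi-homogeneity of $f$: under $(x,y,z)\mapsto(\pi^{\omega}x,\pi y,\pi z)$ with $\omega=(k+r+l)/p$, the set $A=\{\mathrm{ord}(x)\ge\omega,\ \mathrm{ord}(y)\ge1,\ \mathrm{ord}(z)\ge1\}$ maps onto $\mathcal{O}_K^3$ and $f$ scales by $\pi^{k+r+l}$, giving the self-similarity relation $Z_f(s,\chi,A)=q^{-(\omega+2)-(k+r+l)s}Z_f(s,\chi)$, from which the factor is obtained by solving for $Z_f$; the complement $A^c$ is then cut into seven pieces, and it is on the unit strata that Theorem \ref{thm 1.1} (via changes of variables such as $(x,y,z)\mapsto(uw^{\omega},w,vw)$ exploiting homogeneity of $\mathbb{H}_r^k$, plus Lemma \ref{lem 2.13} and the cancellation identity of Lemma \ref{lem 2.10} at the singular point) produces the poles with $a\in\{r,l,k+1\}$. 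Without the self-similarity step your recursion on the stratum $y,z\in\mathcal{P}_K$ does not terminate and the advertised fourth pole factor is not accounted for.
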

As a direct application of Theorem \ref{thm 1.2},
we list all the candidate poles of $Z_f(s, \chi)$.

%*******************************Theorem 1.4*******************************
\begin{thm}\label{thm 1.3}
Let $f(x, y, z)$ be a hybrid polynomial and $t^{1/p}\in \mathcal{O}_K^{\times}$.
Let $s$ be a pole of $Z_f(s, \chi)$. Then each of the following is true:

{\rm (1)} If $p\mid\binom{k+r}{r}$, then
$$s=-\dfrac{1}{p}+\dfrac{2\pi i\mathbb{Z}}{p\log q},$$

{\rm (2)} If $p\nmid\binom{k+r}{r}$,
then $s$ is equal to one of the following five forms:
\begin{align*}
&-1+\dfrac{2\pi i\mathbb{Z}}{\log q},\ \ \ \ \
-\dfrac{1}{p}-\dfrac{2}{k+r+l}+\dfrac{2\pi i\mathbb{Z}}{(k+r+l)\log q},\\
&-\dfrac{1}{p}-\dfrac{1}{r}+\dfrac{2\pi i\mathbb{Z}}{pr\log q},
\ \ \ -\dfrac{1}{p}-\dfrac{1}{l}+\dfrac{2\pi i\mathbb{Z}}{pl\log q},
\ \ \ -\dfrac{1}{p}-\dfrac{1}{k+1}+\dfrac{2\pi i\mathbb{Z}}{p(k+1)\log q}.
\end{align*}
\end{thm}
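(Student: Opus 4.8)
The plan is to read off the poles directly from the rational expressions for $Z_f(s,\chi)$ furnished by Theorem \ref{thm 1.2}, so that no new computation is really needed beyond identifying which roots of the denominators genuinely survive. First I would dispose of case (1): when $p\mid\binom{k+r}{r}$, Theorem \ref{thm 1.2}(1) gives $Z_f(s,\chi)=\frac{1-q^{-1}}{1-q^{-1-ps}}$ (or $0$), whose only poles come from the vanishing of $1-q^{-1-ps}$, i.e. $q^{-1-ps}=1$, equivalently $1+ps\in\frac{2\pi i}{\log q}\mathbb{Z}$, which rearranges to $s=-\frac{1}{p}+\frac{2\pi i\mathbb{Z}}{p\log q}$. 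This is exactly the claimed list.

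Next, for case (2), Theorem \ref{thm 1.2}(2) expresses $Z_f(s,\chi)$ as $F(q^{-s})$ divided by
\[
(1-q^{-1-s})\bigl(1-q^{-((k+l+r)/p+2)-(k+l+r)s}\bigr)\prod_{a\in\{r,l,k+1\}}\bigl(1-q^{-p-a-pas}\bigr).
\]
The candidate poles are precisely the $s\in\mathbb{C}$ at which one of these five factors vanishes. For a factor of the shape $1-q^{-A-Bs}$ with $A\in\mathbb{Q}$, $B\in\mathbb{Q}_{>0}$, the condition $q^{-A-Bs}=1$ means $A+Bs\in\frac{2\pi i}{\log q}\mathbb{Z}$, hence $s=-\frac{A}{B}+\frac{2\pi i\mathbb{Z}}{B\log q}$. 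Applying this to each factor: the factor $1-q^{-1-s}$ gives $s=-1+\frac{2\pi i\mathbb{Z}}{\log q}$; the factor $1-q^{-((k+l+r)/p+2)-(k+l+r)s}$ gives $s=-\frac{(k+l+r)/p+2}{k+l+r}+\frac{2\pi i\mathbb{Z}}{(k+l+r)\log q}=-\frac{1}{p}-\frac{2}{k+l+r}+\frac{2\pi i\mathbb{Z}}{(k+l+r)\log q}$; and for $a\in\{r,l,k+1\}$ the factor $1-q^{-p-a-pas}$ gives $s=-\frac{p+a}{pa}+\frac{2\pi i\mathbb{Z}}{pa\log q}=-\frac{1}{a}-\frac{1}{p}+\frac{2\pi i\mathbb{Z}}{pa\log q}$. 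Collecting these yields exactly the five families listed in part (2).

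There is really no serious obstacle here: the theorem is a formal corollary of Theorem \ref{thm 1.2}, and the only thing to be careful about is the bookkeeping in rewriting each $-A/B$ in the normalized form appearing in the statement (e.g. recognizing $\frac{(k+l+r)/p+2}{k+l+r}=\frac{1}{p}+\frac{2}{k+l+r}$ and $\frac{p+a}{pa}=\frac1p+\frac1a$), together with the observation that a pole of $Z_f$ must be a zero of the stated denominator — so every actual pole lies in the listed set, which is all that is asserted. One should also note that these are \emph{candidate} poles: the numerator $F(q^{-s})$ (respectively $1-q^{-1}$ in case (1)) may cancel some of them, so no claim is made that each listed value is attained. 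Hence the proof consists simply of invoking Theorem \ref{thm 1.2} and solving $q^{-A-Bs}=1$ for each denominator factor.
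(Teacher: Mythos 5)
Your proposal is correct and coincides with the paper's own treatment: the paper presents Theorem \ref{thm 1.3} as a direct application of Theorem \ref{thm 1.2}, obtained exactly as you do by solving $q^{-A-Bs}=1$ for each denominator factor and noting that any pole of $Z_f(s,\chi)$ must be a zero of the stated denominator. Your normalizations $\frac{(k+l+r)/p+2}{k+l+r}=\frac1p+\frac2{k+l+r}$ and $\frac{p+a}{pa}=\frac1p+\frac1a$ match the forms in the statement, so nothing is missing.
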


\noindent{\bf Remark 1.5.}
(1) Notice that the condition $t^{1/p}\in \mathcal{O}_K^{\times}$
in Theorem \ref{thm 1.2} is slight different from the assumption
given in \cite{[LIS]} and \cite{[YH]} that states
$t^{(k+1)/p}\in \mathcal{O}_K^{\times}$ with $p\nmid (k+1)$.
But it is easy to see that these two assumptions are equivalent.
Let us show it here. On the one hand, it is obvious that if
$t^{1/p}\in \mathcal{O}_K^{\times}$, then
$t^{(k+1)/p}\in \mathcal{O}_K^{\times}$. On the other hand,
let $t^{(k+1)/p}\in \mathcal{O}_K^{\times}$. Since
$p\nmid(k+1)$, one has $\gcd(p, k+1)=1$. It follows that
there exist integers $u$ and $v$ such that $u(k+1)+vp=1$. Hence
$$
t^{1/p}=t^{(u(k+1)+vp)/p}=(t^{(k+1)/p})^u t^v\in K.
$$
But $|t^{1/p}|^{k+1}=|t^{(k+1)/p}|=1$. So we have $|t^{1/p}|=1$
which implies that $t^{1/p}\in \mathcal{O}_K^{\times}$.

(2) If $r=1$, one has $\binom{k+r}{r}=k+1$. It then follows that
$p\nmid (k+1)$ (see (1)). Thus by Theorem \ref{thm 1.2} (2),
we obtain the same four poles given in \cite{[YH]} in the case $r=1$,
but we also get a new extra pole, which depends on the characteristic
of $K$ and the parameter $r$ when $r>2$.
We will explain later why this pole disappears if $r=1$.

The paper is organized as follows. In Section 2, we first supply
more information about the hybrid polynomials. Then we review the
stationary phase formula and some results about Newton polyhedra.
Furthermore, we prove some lemmas which are needed in the proof of
our main theorems. Consequently, in Sections 3 and 4, we give the
proofs of Theorems \ref{thm 1.1} and \ref{thm 1.2}, respectively.
Finally, we present some examples to demonstrate Theorems
\ref{thm 1.1} and \ref{thm 1.2}.

\section{\bf Preliminaries and lemmas }

\subsection{\bf Hybrid polynomial}

In Section 1, we introduced the hybrid polynomial, that is,
$$f(x, y, z)=x^p+y^rz^l\mathbb{H}_r^k(y, tz-y),$$
where
$$\mathbb{H}_r^k(y, w)=\sum\limits_{i=0}^{k}{\binom{k+r}{i+r}y^iw^{k-i}}.$$
In this section, we list some results about $\mathbb{H}_r^k(y, w)$.
One can find more details in the original paper of Hauser \cite{[Ha1]}.

First, it follows immediately from the definition that $\mathbb{H}_r^k(y, tz-y)$
is a homogeneous polynomial of degree $k$, i.e., for any $a\in \mathcal{O}_K$, we have
$$\mathbb{H}_r^k(ay, taz-ay)=a^k\mathbb{H}_r^k(y, tz-y).$$ 
Moreover, we have the following identity:
\begin{equation}\label{eqno 2.1}
\mathbb{H}_r^k(y, tz-y)=\sum_{i=0}^k(-1)^i\dbinom{k+r}{k-i}
\dbinom{i+r-1}{i}t^{k-i}y^iz^{k-i}.
\end{equation}
For a proof of (\ref{eqno 2.1}), we refer the readers to \cite{[Ha1]}.
Furthermore, we have the following result.
\begin{lem}\label{lem 2.1}
We have
\begin{equation*}
\dfrac{\partial}{\partial y}(y^rz^l\mathbb{H}_r^k(y, tz-y))
=r\dbinom{k+r}{r}y^{r-1}z^l(tz-y)^k.
\end{equation*}
\end{lem}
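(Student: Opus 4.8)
The plan is to prove Lemma~\ref{lem 2.1} by a direct computation, starting from the explicit monomial expansion~(\ref{eqno 2.1}) rather than from the binomial form of $\mathbb{H}_r^k$. Writing
$$
y^rz^l\mathbb{H}_r^k(y, tz-y)=\sum_{i=0}^k(-1)^i\dbinom{k+r}{k-i}
\dbinom{i+r-1}{i}t^{k-i}y^{i+r}z^{k-i+l},
$$
I would differentiate term by term with respect to $y$, which produces
$$
\dfrac{\partial}{\partial y}(y^rz^l\mathbb{H}_r^k(y, tz-y))
=\sum_{i=0}^k(-1)^i(i+r)\dbinom{k+r}{k-i}\dbinom{i+r-1}{i}t^{k-i}y^{i+r-1}z^{k-i+l}.
$$
So the task reduces to the binomial identity
$$
\sum_{i=0}^k(-1)^i(i+r)\dbinom{k+r}{k-i}\dbinom{i+r-1}{i}t^{k-i}z^{k-i}
=r\dbinom{k+r}{r}z^k(tz-y)^k\big/z^l\cdot z^l,
$$
i.e. after cancelling $y^{r-1}z^l$ on both sides, I must show that the displayed sum equals $r\binom{k+r}{r}(tz-y)^k$, viewed as a polynomial in $y$ and $z$.

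The key step is to reorganize the left-hand side so that the factor $(i+r)$ is absorbed into the binomial coefficients. Using $(i+r)\binom{i+r-1}{i}=(i+r)\binom{i+r-1}{r-1}=r\binom{i+r}{r}=r\binom{i+r}{i}$, the sum becomes
$$
r\sum_{i=0}^k(-1)^i\dbinom{k+r}{k-i}\dbinom{i+r}{i}t^{k-i}z^{k-i}.
$$
Now I would expand $(tz-y)^k=\sum_{j=0}^k\binom{k}{j}(tz)^{k-j}(-y)^j$ and compare coefficients of $t^{k-i}z^{k-i}y^i$ on both sides; equivalently, since everything is homogeneous of degree $k$ in the pair $(y,z)$ after absorbing $t$, it suffices to specialize and verify a single-variable identity. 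Setting $z=1$ and $t=1$ (the general case follows by homogeneity, as noted after~(\ref{eqno 2.1})), I must check
$$
\dbinom{k+r}{r}\sum_{i=0}^k(-1)^i\dbinom{k}{i}y^i \overset{?}{=} \dbinom{k+r}{r}(1-y)^k,
$$
after verifying that $\binom{k+r}{k-i}\binom{i+r}{i}=\binom{k+r}{r}\binom{k}{i}$. This last identity is the trilinear (Vandermonde-type) relation
$$
\dbinom{k+r}{k-i}\dbinom{i+r}{i}=\dbinom{k+r}{r}\dbinom{k}{i},
$$
which follows by writing both sides as ratios of factorials: $\frac{(k+r)!}{(k-i)!\,(i+r)!}\cdot\frac{(i+r)!}{i!\,r!}=\frac{(k+r)!}{(k-i)!\,i!\,r!}=\frac{(k+r)!}{k!\,r!}\cdot\frac{k!}{(k-i)!\,i!}$. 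Once this is in hand, the remaining sum is just the binomial theorem.

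The main obstacle, such as it is, is purely bookkeeping: correctly tracking the shift in exponents under differentiation and making sure the index manipulation $(i+r)\binom{i+r-1}{i}=r\binom{i+r}{i}$ and the factorial identity above are applied without off-by-one errors. An alternative route, which I would mention as a sanity check, is to differentiate the binomial form $y^rz^l\sum_i\binom{k+r}{i+r}y^i(tz-y)^{k-i}$ directly: the $y$-derivative has three pieces (from $y^r$, from $y^i$, and from $(tz-y)^{k-i}$), and one shows the last two telescope using $\binom{k+r}{i+r}i=\binom{k+r}{i+r-1}(k-i+1)$-type relations, leaving only the contribution $r y^{r-1}z^l\sum_i\binom{k+r}{i+r}y^i(tz-y)^{k-i}$ plus a correction that collapses via $\sum_i\binom{k+r}{i+r}y^i(tz-y)^{k-i}$ to a pure power; but the monomial-expansion approach via~(\ref{eqno 2.1}) is cleaner and is the one I would write up.
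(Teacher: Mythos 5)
Your argument is correct, but it takes a genuinely different route from the paper. You start from the monomial expansion (\ref{eqno 2.1}), differentiate term by term, and then reduce everything to two binomial-coefficient identities, namely $(i+r)\binom{i+r-1}{i}=r\binom{i+r}{i}$ and $\binom{k+r}{k-i}\binom{i+r}{i}=\binom{k+r}{r}\binom{k}{i}$, after which the sum collapses to $r\binom{k+r}{r}(tz-y)^k$ by the binomial theorem; both identities are verified correctly by factorials, and in fact once the second identity is in hand you do not even need the specialization $z=t=1$ and the homogeneity remark, since the sum is literally $r\binom{k+r}{r}\sum_{i=0}^k(-1)^i\binom{k}{i}t^{k-i}z^{k-i}y^i$. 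The paper instead works directly with the defining binomial form: it absorbs $y^r$ into the sum as $z^l\sum_{i=r}^{k+r}\binom{k+r}{i}y^i(tz-y)^{k+r-i}$, differentiates, and uses the telescoping relation $\binom{k+r}{i+1}(i+1)=\binom{k+r}{i}(k+r-i)$ so that all terms cancel except the boundary term $r\binom{k+r}{r}y^{r-1}z^l(tz-y)^k$; this is essentially the ``alternative route'' you sketch at the end as a sanity check. The trade-off: the paper's proof is self-contained from Definition \ref{defn 1.1}, whereas yours leans on (\ref{eqno 2.1}), which the paper states but whose proof is outsourced to Hauser; on the other hand, your route is a clean mechanical computation with no index-shifting subtleties. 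Two cosmetic slips worth fixing in a write-up: in your displayed ``reduced identity'' the factors $y^i$ have been dropped from each summand (they must remain after cancelling only $y^{r-1}$), and the expression $r\binom{k+r}{r}z^k(tz-y)^k/z^l\cdot z^l$ on the right-hand side is garbled; your subsequent prose states the intended identity correctly, so these are typographical rather than mathematical defects.
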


\begin{proof}
The proof is straightforward. First, we deduce that
\begin{align*}
y^rz^l\mathbb{H}_r^k(y, tz-y)
=&z^l\sum_{i=0}^{k}{\dbinom{k+r}{i+r}y^{i+r}(tz-y)^{k-i}}\\
=&z^l\sum_{i=r}^{k+r}{\dbinom{k+r}{i}y^i(tz-y)^{k+r-i}}.
\end{align*}
It then follows that
\begin{align}\label{2.2}
&\dfrac{\partial}{\partial y}(y^rz^l\mathbb{H}_r^k(y, tz-y))\nonumber\\
=&z^l\sum_{i=r}^{k+r}{\dbinom{k+r}{i}iy^{i-1}(tz-y)^{k+r-i}}
-z^l\sum_{i=r}^{k+r-1}{\dbinom{k+r}{i}(k+r-i)y^i(tz-y)^{k+r-i-1}}\nonumber\\
=&z^l\Big(\sum_{i=r-1}^{k+r-1}{\dbinom{k+r}{i+1}(i+1)y^i(tz-y)^{k+r-i-1}}
-\sum_{i=r}^{k+r-1}{\dbinom{k+r}{i}(k+r-i)y^i(tz-y)^{k+r-i-1}}\Big)\nonumber\\
=&r\dbinom{k+r}{r}y^{r-1}z^l(tz-y)^k\nonumber\\
+&z^l\sum_{i=r}^{k+r-1}{\Big(\dbinom{k+r}{i+1}(i+1)
-\dbinom{k+r}{i}(k+r-i)\Big)y^i(tz-y)^{k+r-i-1}}
\end{align}
On the other hand, for any integer $i$ with $r\le i\le k+r-1$, one has
\begin{align}\label{2.3}
\dbinom{k+r}{i+1}(i+1)=&\dfrac{(k+r)\cdots(k+r-i)}{(i+1)!}(i+1)\nonumber\\
=&\dfrac{(k+r)\cdots(k+r-i+1)}{i!}(k+r-i)\nonumber\\
=&\dbinom{k+r}{i}(k+r-i)
\end{align}
So from (\ref{2.2}) and (\ref{2.3}), we arrive at
$$\dfrac{\partial}{\partial y}(y^rz^l\mathbb{H}_r^k(y, tz-y))
=r\dbinom{k+r}{r}y^{r-1}z^l(tz-y)^k$$
as desired. So Lemma \ref{lem 2.1} is proved.
\end{proof}

\subsection{\bf Stationary phase formula}

In \cite{[Igu3]}, Igusa first introduced the stationary phase formula.
Since then it became a powerful tool to compute the local zeta function
in arbitrary characteristic. In this section, we recall the stationary 
phase formula.

For any $x\in\mathcal{O}_K^n$, let $\bar{x}$ be the image of $x$ under
the canonical homomorphism
$$\mathcal{O}_K^n\rightarrow (\mathcal{O}_K/\pi\mathcal{O}_K)^n\cong\mathbb{F}_q^n.$$
For $f(x)\in \mathcal{O}_K[x]$, $\bar{f} (x)$ stands for the polynomial
obtained by reducing modulo $\pi$ the coefficients of $f(x)$.
Let $A$ be any ring and $f(x)\in A[x]$. We define
$V_f(A):=\{x\in A^n|f(x)=0\}$. Let Sing$_f(A)$ be the set
of $A$-value singular points of $V_f$, namely,
$${\rm Sing}_f(A):=\Big\{x\in A^n\Big|f(x)
=\dfrac{\partial f}{\partial x_1}(x)=\cdots
=\dfrac{\partial f}{\partial x_n}(x)=0\Big\}.$$

We fix a lifting $R$ of $\mathbb{F}_q$ in $\mathcal{O}_K$. That is,
the set $R^n$ is mapped bijectively onto $\mathbb{F}_q^n$ by the
canonical homomorphism. Let $\bar{D}$ be a subset of 
$\mathbb{F}_q^n$ and let $D$ be its preimage
under the canonical homomorphism. Denote by $S(f, D)$ the subset
of $R^n$ mapped bijectively to the set
${\rm Sing}_{\bar{f}}(\mathbb{F}_q)\cap \bar{D}$. Furthermore, we denote
\begin{align*}
v(\bar{f}, D, \chi):={\left\{\begin{array}{rl}
q^{-n} \cdot \#\{\bar{P}\in \bar{D}|\bar{P}\notin V_{\bar{f}}(\mathbb{F}_q)\}, \ \ \ \
&{\rm if} \ \chi=\chi_{{\rm triv}},\\
q^{-nc_{\chi}}\sum\limits_{\{P\in D|\bar{P}\notin V_{\bar{f}}(\mathbb{F}_q)\}
{\rm mod}\ \mathcal{P}_K^{c_{\chi}}}{\chi(ac f(P)),}\  \ \ \
&{\rm if} \ \chi \neq \chi_{{\rm triv}},
\end{array}\right.}
\end{align*}
where $c_{\chi}$ is the conductor of $\chi$, and
\begin{align*}
\sigma(\bar{f}, D, \chi):={\left\{\begin{array}{rl}
q^{-n}\cdot \#\{\bar{P}\in \bar{D}|\bar{P}\ {\rm is\ a\ nonsingular\ point\ of} \
V_{\bar{f}}(\mathbb{F}_q)\}, \ \  &{\rm if} \ \chi=\chi_{{\rm triv},}\\
0,\ \   & {\rm if} \ \chi \neq \chi_{{\rm triv}}.
\end{array}\right.}
\end{align*}
Finally, let
$$Z_f(s, \chi, D):=\int_D\chi(acf(x))|f(x)|^s|dx|.$$

Now we can state the Igusa's stationary phase formula in
the following form.

%*******************************Lemma2.2*******************************
\begin{lem}{\rm (Stationary phase formula)} {\rm \cite{[YH]}}\label{lem 2.2}
For any complex number $s$ with ${\rm Re}(s)>0$, we have
\begin{align*}
Z_f(s, \chi, D)=v(\bar{f}, D, \chi)&+\sigma(\bar{f}, D, \chi)
\dfrac{(1-q^{-1})q^{-s}}{1-q^{-1-s}}+Z_f(s, \chi, D_{S(f, D)}),
\end{align*}
where 
$$D_{S(f, D)}:=\bigcup\limits_{P\in S(f, D)}D_P$$ 
with
$D_P:=\{x\in\mathcal{O}_K^n|x-P\in(\pi\mathcal{O}_K)^n\}$,
that is, $D_{S(f, D)}$ is the preimage of
${\rm Sing}_{\bar{f}}(\mathbb{F}_q)\cap \bar{D}$
under the canonical homomorphism
$\mathcal{O}_K^n\rightarrow (\mathcal{O}_K/\pi\mathcal{O}_K)^n$.
\end{lem}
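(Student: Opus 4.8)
The plan is to reduce to a purely local computation by partitioning the domain $D$ into the cosets of $(\pi\mathcal{O}_K)^n$ it contains and evaluating the integral over each coset according to the behaviour of the reduction $\bar f$ at the corresponding $\mathbb{F}_q$-point. Since $D$ is the full preimage of $\bar D$, I would write $D=\bigsqcup_{\bar P\in\bar D}D_P$ with $D_P=P+(\pi\mathcal{O}_K)^n$ for the fixed lift $P\in R^n$, so that $Z_f(s,\chi,D)=\sum_{\bar P\in\bar D}\int_{D_P}\chi(acf(x))|f(x)|^s|dx|$, and then sort the terms into three groups: (a) $\bar P\notin V_{\bar f}(\mathbb{F}_q)$; (b) $\bar P$ a nonsingular point of $V_{\bar f}(\mathbb{F}_q)$; (c) $\bar P\in{\rm Sing}_{\bar f}(\mathbb{F}_q)$.

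For group (a) I would observe that $f$ is a unit on all of $D_P$, so $|f(x)|\equiv 1$ and $\chi(acf(x))=\chi(f(x))$ there; the coset integral is then ${\rm meas}(D_P)=q^{-n}$ when $\chi=\chi_{{\rm triv}}$, and $q^{-nc_\chi}\sum\chi(f(x))$ (the sum over $D_P$ modulo $\mathcal{P}_K^{c_\chi}$) otherwise, because $f(x)\bmod\mathcal{P}_K^{c_\chi}$ is determined by $x\bmod\mathcal{P}_K^{c_\chi}$. Summing over the group-(a) points reproduces $v(\bar f,D,\chi)$ exactly. For group (c), the union of the corresponding cosets $D_P$ is by construction precisely $D_{S(f,D)}$, so this group contributes exactly the remainder term $Z_f(s,\chi,D_{S(f,D)})$; there is nothing further to prove there.

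The substance is group (b). For such a $\bar P$ I would, after permuting coordinates, assume $\partial\bar f/\partial x_n(\bar P)\neq 0$, hence $\partial f/\partial x_n\in\mathcal{O}_K^\times$ on $D_P$, and then use the change of variables $x\mapsto(x_1-P_1,\dots,x_{n-1}-P_{n-1},f(x))$. By the non-archimedean inverse function theorem this is an analytic bijection of $D_P$ onto $(\pi\mathcal{O}_K)^n$ (surjectivity uses Hensel's lemma together with $f(P)\in\pi\mathcal{O}_K$) whose Jacobian has absolute value $1$, so Haar measure is preserved; the coset integral collapses to $q^{-(n-1)}\int_{\pi\mathcal{O}_K}\chi(ac\,w)|w|^s|dw|$. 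A one-line stratification of this last integral by ${\rm ord}(w)$ gives $q^{-n}(1-q^{-1})q^{-s}/(1-q^{-1-s})$ when $\chi$ is trivial and $0$ otherwise (in the latter case by orthogonality of $\chi$ on $\mathcal{O}_K^\times$); multiplying by the number of nonsingular $\bar D$-points of $V_{\bar f}(\mathbb{F}_q)$ and recalling the definition of $\sigma(\bar f,D,\chi)$ turns this into $\sigma(\bar f,D,\chi)\,(1-q^{-1})q^{-s}/(1-q^{-1-s})$ in both cases. Adding the three groups then yields the stated identity. I expect the only delicate step to be the justification of the change of variables in group (b) — that the map really is a measure-preserving bijection onto $(\pi\mathcal{O}_K)^n$ — which is exactly where Hensel's lemma (equivalently the $p$-adic implicit function theorem) enters; everything else is elementary bookkeeping with the Haar measure.
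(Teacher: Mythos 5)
Your argument is correct, and it is essentially the standard proof of Igusa's stationary phase formula: the paper itself states Lemma~2.2 without proof, citing \cite{[YH]} and ultimately Igusa \cite{[Igu3]}, and the proof given there proceeds exactly as you do --- decompose $D$ into the cosets $D_P$ of $(\pi\mathcal{O}_K)^n$, identify the contributions $v(\bar f,D,\chi)$ (units, with the conductor $c_\chi$ controlling constancy of $\chi(ac\,f)$ on classes mod $\mathcal{P}_K^{c_\chi}$) and $Z_f(s,\chi,D_{S(f,D)})$ (singular reductions), and treat a nonsingular zero $\bar P$ by the measure-preserving change of variables $x\mapsto(x_1-P_1,\dots,x_{n-1}-P_{n-1},f(x))$, justified by Hensel's lemma and the unit Jacobian $\partial f/\partial x_n$, which reduces the coset integral to $q^{-(n-1)}\int_{\pi\mathcal{O}_K}\chi(ac\,w)|w|^s|dw|$ and hence to the middle term via orthogonality of $\chi$ on $\mathcal{O}_K^{\times}$. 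Your bookkeeping of the resulting factors ($q^{-n}(1-q^{-1})q^{-s}/(1-q^{-1-s})$ per nonsingular point, matching the definition of $\sigma(\bar f,D,\chi)$ in both the trivial and nontrivial cases) is accurate, so no gap remains.
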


\subsection{\bf Newton polyhedra}
%**************************************************************
In this section, we review some results about Newton polyhedra.
One can see more details in \cite{[ZG2]}.

First, we define a set as $\mathbb{R}_{+}:=\{x\in \mathbb{R}|x\geq 0\}$. Let
$f(x)=\sum_{l}{a_lx^l}\in K[x]$ be a polynomial in $n$ variables satisfying
$f(0)=0$, where the notation
$a_lx^l=a_{l_1, \cdots l_n}{x_1}^{l_1}\cdots {x_n}^{l_n}$, $l=(l_1, \cdots, l_n)$.
The {\it support set} of $f$, denoted by ${\rm supp}(f)$, is defined by 
$${\rm supp}(f)=\{l\in \mathbb{N}^n|a_l\neq 0\}.$$ 
Then we define the {\it Newton polyhedra} $\Gamma(f)$
of $f$ as the convex hull in $\mathbb{R}_{+}^n$ of the set
$$\bigcup_{l\in {\rm supp}(f)}{(l+\mathbb{R}_{+}^n)}.$$

By a proper {\it face} $\gamma$ of $\Gamma(f)$, we mean the non-empty convex set
$\gamma$ obtained by intersecting $\Gamma(f)$ with an affine hyperplane $H$,
such that $\Gamma(f)$ is contained in one of two half-plane determined by $H$.
The hyperplane $H$ is called the {\it supporting hyperplane} of $\gamma$. Let
$a_{\gamma}=(a_1, a_2,\cdots, a_n)\in \mathbb{N}^n\backslash\{0\}$
denote the vector that is perpendicular with the supporting
hyperplane $H$ and let  $|a_{\gamma}|:=\sum\limits_{i}{a_i}$.
A face of codimension one is named {\it facet}.

Let $\langle , \rangle$ denote the usual inner product of $\mathbb{R}^n$.
For any $a\in \mathbb{R}^n_{+}$, we define
$$m(a):=\inf_{b\in \Gamma(f)}{\{\langle a, b\rangle \}},$$
and for any $a\in \mathbb{R}^n_{+}\setminus \{0\}$,
{\it the first meet locus} of $a$ is defined as
$$F(a):=\{x\in \Gamma(f)|\langle a, x\rangle=m(a)\}.$$
In fact, $F(a)$ is a proper face of $\Gamma(f)$.

Now we define an equivalence relation $\simeq$ on $\mathbb{R}^n_{+}\setminus \{0\}$
as follows: For $a, a^{'}\in \mathbb{R}^n_{+}\setminus \{0\}$,
$$a\simeq a^{'} \ {\rm if\ and\ only\ if}\ F(a)=F(a^{'}).$$
Let $\gamma$ be a proper face of $\Gamma(f)$,
we define {\it the cone associated to $\gamma$} as
$$\Delta_{\gamma}:=\{a\in \mathbb{R}^n_{+}\setminus \{0\}|F(a)=\gamma\}.$$
From the definition, it immediately follows that
$\Delta_{\gamma}\cap\Delta_{\gamma^{'}}=\emptyset$
for different proper faces $\gamma, \gamma^{'}$ of $\Gamma(f)$.
The following lemma describe the generators of $\Delta_{\gamma}$,
and its proof can be found in \cite{[De2]}.

%*******************************Lemma2.3*******************************
\begin{lem}\label{lem 2.3}
Let $\gamma$ be a proper face of $\Gamma(f)$ and let $\omega_1, \cdots, \omega_e$
be the facets of $\Gamma(f)$ which contain $\gamma$.
Let $\alpha_1, \cdots, \alpha_e$ be vectors which are perpendicular to 
$\omega_1, \cdots, \omega_e$, respectively. Then
\begin{equation*}
\Delta_{\gamma}=\Big\{\sum_{i=1}^ea_i\alpha_i|a_i\in \mathbb{R}^{+}\Big\}.
\end{equation*}
\end{lem}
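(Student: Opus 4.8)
The plan is to prove Lemma \ref{lem 2.3} as a purely convex-geometric statement about the polyhedron $\Gamma(f)$; it is classical (it is exactly Denef's \cite{[De2]}), and I sketch the argument. First I would record the structural facts about $\Gamma := \Gamma(f)$ that are used. Since $\Gamma$ is the convex hull of finitely many translates of $\mathbb{R}_+^n$, it is a rational polyhedron whose recession cone is exactly $\mathbb{R}_+^n$; in particular it is pointed and has an irredundant facet description $\Gamma = \bigcap_{j\in J}\{x\in\mathbb{R}^n:\langle\beta_j,x\rangle\geq m(\beta_j)\}$, where $\omega_j:=F(\beta_j)$ runs over the facets of $\Gamma$ and each $\beta_j\in\mathbb{N}^n\setminus\{0\}$ (the equality ${\rm cone}(\beta_j)=(\mathbb{R}_+^n)^\circ=\mathbb{R}_+^n$ forces $\beta_j\geq 0$). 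I also use the standard fact that a proper face of a pointed polyhedron is the intersection of all facets containing it; applied to $\gamma$ this gives $\gamma=\omega_1\cap\cdots\cap\omega_e$, where $\alpha_1,\cdots,\alpha_e$ are, up to positive scaling, the normals $\beta_j$ of those $\omega_i$, each satisfying $\langle\alpha_i,x\rangle=m(\alpha_i)$ on $\omega_i$ and $\langle\alpha_i,x\rangle>m(\alpha_i)$ on $\Gamma\setminus\omega_i$.

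For the inclusion $\supseteq$, take $a=\sum_{i=1}^e a_i\alpha_i$ with every $a_i$ strictly positive. By linearity $m(a)\geq\sum_i a_i m(\alpha_i)$, and equality holds because any point of the nonempty face $\gamma\subseteq\bigcap_i\omega_i$ simultaneously minimizes every $\langle\alpha_i,\cdot\rangle$ over $\Gamma$. Hence, for $x\in\Gamma$, $\langle a,x\rangle=\sum_i a_i\langle\alpha_i,x\rangle\geq\sum_i a_i m(\alpha_i)=m(a)$ term by term, and since all $a_i>0$, equality $\langle a,x\rangle=m(a)$ forces $\langle\alpha_i,x\rangle=m(\alpha_i)$ for every $i$, i.e. $x\in\omega_1\cap\cdots\cap\omega_e=\gamma$. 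Thus $F(a)=\gamma$, so $a\in\Delta_\gamma$.

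For the inclusion $\subseteq$, take $a\in\Delta_\gamma$, so $a\neq 0$ and $F(a)=\gamma$; fix a point $x_0$ in the relative interior of $\gamma$. Then $x_0$ minimizes $\langle a,\cdot\rangle$ over $\Gamma$, and the constraints of the facet description that are active at $x_0$ are exactly the $\omega_i$ (a facet meeting ${\rm relint}\,\gamma$ must contain $\gamma$, hence be one of the $\omega_i$). A separation argument then gives $a\in{\rm cone}(\alpha_1,\cdots,\alpha_e)$: otherwise one separates $a$ from this closed cone by a vector $w$ with $\langle a,w\rangle<0\leq\langle\alpha_i,w\rangle$ for all $i$, and then $x_0+tw\in\Gamma$ for small $t>0$ (all active constraints still hold, inactive ones by continuity) while $\langle a,x_0+tw\rangle=m(a)+t\langle a,w\rangle<m(a)$, contradicting minimality. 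It remains to promote the nonnegative coefficients to strictly positive ones. For this I would identify $\Delta_\gamma$ with the relative interior of the inner normal cone $N_\gamma:={\rm cone}(\alpha_1,\cdots,\alpha_e)$ — using that the faces of $N_\gamma$ correspond to the faces of $\Gamma$ containing $\gamma$, so $F(a)=\gamma$ precisely when $a$ lies in no proper face of $N_\gamma$ — and then invoke the elementary fact that ${\rm relint}\,{\rm cone}(v_1,\cdots,v_m)=\{\sum_i\lambda_i v_i:\lambda_i>0\}$ for any generating set (it is the image of the open orthant $\mathbb{R}_{>0}^m$ under the linear surjection $\lambda\mapsto\sum_i\lambda_i v_i$). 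Combining, $a=\sum_{i=1}^e a_i\alpha_i$ with all $a_i>0$, which closes the proof.

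The main obstacle is exactly this last positivity step. The naive idea — if some coefficient of $a$ vanishes, discard it and conclude that $F(a)$ is strictly larger than $\gamma$ — fails in dimension $\geq 3$, because a face can lie on a facet that is redundant for cutting it out (for instance the apex of a square pyramid lies on four facets but is already the intersection of three of them). This is why one must pass through the relative-interior description of the normal cone rather than argue directly with the listed generators. For the low-dimensional Newton polyhedra actually needed later this pathology does not arise and the direct argument suffices; in general the statement is precisely \cite{[De2]}.
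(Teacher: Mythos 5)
The paper never proves this lemma itself --- it is quoted from Denef \cite{[De2]} --- so there is no internal argument to measure yours against; what you give is a correct self-contained convex-geometry proof of the cited fact. Your inclusion $\supseteq$ is exactly right: for strictly positive $a_i$ (and note you correctly read the paper's $\mathbb{R}^{+}$ here as strict positivity, which is what $\Delta_\gamma\subseteq\mathbb{R}^n_+\setminus\{0\}$ with $F(a)=\gamma$ requires), minimizing $\langle\sum_i a_i\alpha_i,\cdot\rangle$ over $\Gamma(f)$ term by term gives $F(\sum_i a_i\alpha_i)=\omega_1\cap\cdots\cap\omega_e$, and the identity $\gamma=\omega_1\cap\cdots\cap\omega_e$ for a proper face of the full-dimensional polyhedron $\Gamma(f)$ finishes it. For $\subseteq$, the Farkas/separation argument at a point $x_0\in{\rm relint}\,\gamma$ correctly lands $a$ in $N_\gamma:={\rm cone}(\alpha_1,\dots,\alpha_e)$: the active constraints at $x_0$ are precisely the facets containing $\gamma$, and $x_0+tw$ stays in $\Gamma(f)$ for small $t>0$ because the active inequalities are preserved by $\langle\alpha_i,w\rangle\ge 0$ and the finitely many inactive ones are strict. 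You also correctly identify the real crux --- passing from nonnegative to strictly positive coefficients --- and your route, $\Delta_\gamma={\rm relint}\,N_\gamma$ via the face correspondence between $N_\gamma$ and the faces of $\Gamma(f)$ containing $\gamma$, combined with ${\rm relint}\,{\rm cone}(v_1,\dots,v_m)=\{\sum_i\lambda_iv_i:\lambda_i>0\}$ (relative interior commutes with the linear image of $\mathbb{R}^m_{>0}$), is sound; your warning that the naive ``drop the zero coefficients'' shortcut fails in dimension $\ge 3$ is well taken. The only ingredients you cite rather than prove (``a face is the intersection of the facets containing it'' and the normal-cone/face duality giving $F(a)=\gamma\iff a\in{\rm relint}\,N_\gamma$) are standard polyhedral facts, which is a reasonable level of detail given that the paper itself offers only the reference to \cite{[De2]}; spelling out that duality would be the one addition needed to make your note fully self-contained.
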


From the above discussion, it follows that $\mathbb{R}^n_{+}$ can
be partitioned into equivalence classes module $\simeq$, that is
\begin{equation*}
\mathbb{R}^n_{+}=\{0\}\bigcup\bigcup_{\gamma}\Delta_{\gamma},
\end{equation*}
where $\gamma$ runs over all proper faces of $\Gamma(f)$.
Then one derives that
\begin{equation}\label{eqno 2.4}
\mathbb{N}^n=\{0\}\bigcup\bigcup_{\gamma}
\big(\Delta_{\gamma}\bigcap (\mathbb{N}^n\setminus \{0\})\big).
\end{equation}

Let $g(u, v)$ be a polynomial of the form (\ref{eqno 1.2}). Using Lemma 
\ref{lem 2.3}, we can calculate all the proper faces of $\Gamma(g)$,
and the cones associated to them. Explicitly, we have the following lemma. 

%*******************************Lemma2.4*******************************
\begin{lem}\label{lem 2.4}
Let $g(u,v)=\sum\limits_{i=i_0}^{\infty}\alpha_iu^i
+\sum\limits_{j=j_0}^{\infty}\beta_jv^j\in\mathcal{O}_K[u, v]$.
Then one has
\begin{equation*}
\Gamma(g)=\{(u, v)|u\ge i_0, v\ge j_0, j_0u+i_0v\ge i_0j_0\}.
\end{equation*}
Moreover, $\Gamma(g)$ has exact five proper faces, and
\begin{align*}
\Delta_{\gamma}={\left\{\begin{array}{rl}
\{(0, a)|a\in \mathbb{R}^{+}\},\  &\gamma=\{(u, 0)|u\ge i_0\},\\
\{(bj_0, a+bi_0)|a, b\in \mathbb{R}^{+}\},\   &\gamma=(i_0, 0),\\
\{(aj_0, ai_0)|a\in \mathbb{R}^{+}\},\
&\gamma=\{(u, v)|j_0u+i_0v=i_0j_0, 0\le u\le i_0, 0\le v\le j_0\},\\
\{(aj_0+b, ai_0)|a, b\in \mathbb{R}^{+}\},\    &\gamma=(0, j_0),\\
\{(a, 0)|a\in \mathbb{R}^{+}\},\  &\gamma=\{(0, v)|v\ge j_0\}.
\end{array}\right.}
\end{align*}
\end{lem}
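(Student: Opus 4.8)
The plan is to read $\Gamma(g)$ off its support set, determine the facets and vertices of $\Gamma(g)$ by inspection, and then apply Lemma~\ref{lem 2.3} to produce the cone attached to each proper face. First I would compute the support of $g$: since $g$ is a polynomial, only finitely many $\alpha_i$, $\beta_j$ are nonzero, so ${\rm supp}(g)\subseteq\{(i,0):i\ge i_0\}\cup\{(0,j):j\ge j_0\}$, and $(i_0,0),(0,j_0)\in{\rm supp}(g)$ because $\alpha_{i_0}\beta_{j_0}\ne 0$. Every other exponent satisfies $(i,0)\in(i_0,0)+\mathbb{R}_{+}^2$ for $i>i_0$ and $(0,j)\in(0,j_0)+\mathbb{R}_{+}^2$ for $j>j_0$, so
$$\Gamma(g)={\rm conv}\Big(\big((i_0,0)+\mathbb{R}_{+}^2\big)\cup\big((0,j_0)+\mathbb{R}_{+}^2\big)\Big).$$
Writing a point of this hull as a convex combination of a point of $(i_0,0)+\mathbb{R}_{+}^2$ and a point of $(0,j_0)+\mathbb{R}_{+}^2$, a short computation shows that $(u,v)\in\mathbb{R}_{+}^2$ lies in $\Gamma(g)$ exactly when $u/i_0+v/j_0\ge 1$, i.e. when $j_0u+i_0v\ge i_0j_0$; this yields the description of $\Gamma(g)$ in the statement.

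Next I would pin down the boundary. From the description just obtained, $\Gamma(g)$ is the intersection of the three half-planes $u\ge 0$, $v\ge 0$ and $j_0u+i_0v\ge i_0j_0$, none of them redundant, so it has exactly three facets: the ray $\omega_1=\{(u,0):u\ge i_0\}$ on the line $v=0$, with normal $(0,1)$; the segment $\omega_2$ from $(i_0,0)$ to $(0,j_0)$ on the line $j_0u+i_0v=i_0j_0$, with normal $(j_0,i_0)$; and the ray $\omega_3=\{(0,v):v\ge j_0\}$ on the line $u=0$, with normal $(1,0)$. In each case one checks that $m(a)$ is attained along the whole facet, so these are genuine supporting hyperplanes in the sense of Section~2.3. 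The $0$-dimensional faces are the pairwise intersections of facets lying on $\Gamma(g)$, namely $\omega_1\cap\omega_2=(i_0,0)$ and $\omega_2\cap\omega_3=(0,j_0)$, while $\omega_1\cap\omega_3$ is the origin, which is not in $\Gamma(g)$. Hence $\Gamma(g)$ has exactly five proper faces.

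Finally, for each proper face $\gamma$ I would invoke Lemma~\ref{lem 2.3}, which says that $\Delta_\gamma$ is the $\mathbb{R}^{+}$-cone generated by the normals of the facets containing $\gamma$. The facet $\omega_1$ is contained only in itself, so $\Delta_{\omega_1}=\mathbb{R}^{+}(0,1)=\{(0,a):a\in\mathbb{R}^{+}\}$; likewise $\Delta_{\omega_2}=\mathbb{R}^{+}(j_0,i_0)=\{(aj_0,ai_0):a\in\mathbb{R}^{+}\}$ and $\Delta_{\omega_3}=\mathbb{R}^{+}(1,0)=\{(a,0):a\in\mathbb{R}^{+}\}$. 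The vertex $(i_0,0)$ lies in $\omega_1$ and $\omega_2$, so $\Delta_{(i_0,0)}=\mathbb{R}^{+}(0,1)+\mathbb{R}^{+}(j_0,i_0)=\{(bj_0,a+bi_0):a,b\in\mathbb{R}^{+}\}$; the vertex $(0,j_0)$ lies in $\omega_2$ and $\omega_3$, so $\Delta_{(0,j_0)}=\mathbb{R}^{+}(j_0,i_0)+\mathbb{R}^{+}(1,0)=\{(aj_0+b,ai_0):a,b\in\mathbb{R}^{+}\}$. This is exactly the table in the statement.

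The one place that needs genuine care is the first half: showing that the higher-order monomials $u^i$ ($i>i_0$) and $v^j$ ($j>j_0$) contribute nothing new to $\Gamma(g)$, so that the polyhedron is controlled entirely by the two leading exponents $(i_0,0)$ and $(0,j_0)$, and then verifying that the three proposed lines are really supporting hyperplanes (i.e. that $\Gamma(g)$ sits in the correct half-plane in each case) with no further vertex hiding on the boundary. Once $\Gamma(g)$ and its three facets are fixed, reading off the five cones is a mechanical application of Lemma~\ref{lem 2.3}, so I expect no real difficulty there.
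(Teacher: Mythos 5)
Your proof is correct and follows essentially the same route as the paper: identify the three facets and two vertices of $\Gamma(g)$ and read each cone off Lemma~\ref{lem 2.3} using the normals $(0,1)$, $(j_0,i_0)$ and $(1,0)$; the paper simply declares the facial structure obvious, whereas you supply the convex-hull computation. Note that your derivation gives $\Gamma(g)=\{(u,v)\in\mathbb{R}_{+}^2 \mid j_0u+i_0v\ge i_0j_0\}$, which is the intended reading of the displayed description in the lemma (the conditions $u\ge i_0$, $v\ge j_0$ there should be $u\ge 0$, $v\ge 0$).
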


\begin{proof}
%From the definition of $g$, one can easily draw the graph of $\Gamma(g)$ as
%follows:
%\begin{figure}[htbp]
 % \centering
  % Requires \usepackage{graphicx}
 % \includegraphics[width=0.5\textwidth]{yqy.jpg}
 % \caption{$\Gamma(g)$}
% \end{figure}

First of all, it is obvious that $\Gamma(g)$ has the five proper faces:
\begin{align*}
\gamma_1&=\{(u, 0)|u\ge i_0\},\\
\gamma_2&=(i_0, 0),\\
\gamma_3&=\{(u, v)|j_0u+i_0v=i_0j_0, 0\le u\le i_0, 0\le v\le j_0\},\\
\gamma_4&=(0, j_0),\\
\gamma_5&=\{(0, v)|v\ge j_0\}
\end{align*}
as desired.

For $\gamma_1$, we choose the vector which is perpendicular to
$\gamma_1$ to be $(0, 1)$. Then by Lemma \ref{lem 2.3}, we have
$\Delta_{\gamma_1}=\{a(0, 1)|a\in \mathbb{R}^{+}\}=\{(0, a)|a\in \mathbb{R}^{+}\}$.

For $\gamma_2$, it is easy to see the point is intersected by two lines
(actually two facets of $\Gamma(g)$) $\gamma_1$ and $\gamma_3$.
We choose the vectors which are perpendicular to
$\gamma_1$ and $\gamma_3$ to be $(0, 1)$ and $(j_0, i_0)$, respectively.
Then Lemma \ref{lem 2.3} gives us that
$$\Delta_{\gamma_2}=\{a(0, 1)+b(j_0, i_0)|a, b\in \mathbb{R}^{+}\}
=\{(bj_0, a+bi_0)|a, b\in \mathbb{R}^{+}\}.$$

For $\gamma_3$, we choose $(j_0, i_0)$ to be the vector
which is perpendicular to $\gamma_3$. Then using Lemma \ref{lem 2.3},
one derives that
$\Delta_{\gamma_3}=\{a(j_0, i_0)|a\in \mathbb{R}^{+}\}
=\{(aj_0, ai_0)|a\in \mathbb{R}^{+}\}.$

For $\gamma_4$, the point is intersected by  $\gamma_3$ and $\gamma_5$.
We choose the vectors which are perpendicular to $\gamma_3$ and $\gamma_5$
to be $(j_0, i_0)$ and $(1, 0)$, respectively. By Lemma \ref{lem 2.3},
we have
$$\Delta_{\gamma_4}=\{a(j_0, i_0)+b(1, 0)|a, b\in \mathbb{R}^{+}\}
=\{(aj_0+b, ai_0)|a, b\in \mathbb{R}^{+}\}.$$

For $\gamma_5$, we choose the vector which is perpendicular to
$\gamma_5$ to be $(1, 0)$. Hence Lemma \ref{lem 2.3} tells us that
$\Delta_{\gamma_5}=\{a(1, 0)|a\in \mathbb{R}^{+}\}=\{(a, 0)|a\in \mathbb{R}^{+}\}$.

This ends the proof of Lemma \ref{lem 2.4}.
\end{proof}

As an application of Lemma \ref{lem 2.4}, we have the following important lemma.

%*******************************Lemma2.5*******************************
\begin{lem}\label{lem 2.5}
Let $g(u,v)=\sum\limits_{i=i_0}^{\infty}\alpha_iu^i
+\sum\limits_{j=j_0}^{\infty}\beta_jv^j\in\mathcal{O}_K[u, v]$ with
$\gamma_1, \cdots, \gamma_5$ being the proper faces of $\Gamma(g)$
defined in the proof of Lemma \ref{lem 2.4}. Then 
\begin{align*}
\Delta_{\gamma_c}\bigcap (\mathbb{N}^2\setminus \{0\})
={\left\{\begin{array}{rl}
\{(0, a)|a\in \mathbb{Z}^{+}\},\ \  &c=1\\
\bigcup\limits_{m=0}^{j_0-1}\big\{(m+bj_0, \frac{mi_0+n_m}{j_0}+bi_0+a)
|a, b\in \mathbb{Z}^{+}\},\ \ &c=2,\\
\{(aj_0, ai_0)|a\in \mathbb{Z}^{+}\},\ \ &c=3\\
\bigcup\limits_{m=0}^{i_0-1}\big\{(\frac{mj_0+n_m^{'}}{i_0}+aj_0+b, m+ai_0)
|a, b\in \mathbb{Z}^{+}\big\},\ \ &c=4,\\
\{(a, 0)|a\in \mathbb{Z}^{+}\},\ \  &c=5,
\end{array}\right.}
\end{align*}
where $n_m$ and $n_m^{'}$ are positive integers depending on $m$.
\end{lem}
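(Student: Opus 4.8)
The plan is to intersect each cone $\Delta_{\gamma_c}$ from Lemma \ref{lem 2.4} with the lattice $\mathbb{N}^2\setminus\{0\}$ and describe the result explicitly. The cases $c=1,3,5$ are immediate: these cones are one-dimensional rays generated by primitive integer vectors $(0,1)$, $(j_0,i_0)$ (primitive since $\gcd(i_0,j_0)=1$), and $(1,0)$, so the lattice points on each ray are exactly the positive integer multiples of the generator, giving the stated sets. The real content lies in the two-dimensional cones $\Delta_{\gamma_2}$ and $\Delta_{\gamma_4}$, which are the cases $c=2$ and $c=4$; I will treat $c=2$ in detail and note that $c=4$ is entirely symmetric under swapping the roles of $u\leftrightarrow v$ and $i_0\leftrightarrow j_0$.

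For $c=2$, recall $\Delta_{\gamma_2}=\{a(0,1)+b(j_0,i_0)\mid a,b\in\mathbb{R}^+\}$. First I would determine which points of this open cone lie in $\mathbb{N}^2$. A point $(b j_0, a+b i_0)$ has integer first coordinate iff $b j_0\in\mathbb{Z}$; writing the first coordinate as $\ell$ and using $\gcd(i_0,j_0)=1$, one sees $b$ ranges over $\frac{1}{j_0}\mathbb{Z}^+$, and the residue of $\ell$ modulo $j_0$ is determined by $\lfloor b\rfloor$ only through its fractional part $m/j_0$ with $m\in\{0,1,\dots,j_0-1\}$. Fixing this $m$, the first coordinate is $\ell = m + b' j_0$ for a nonnegative integer $b'$ (with $b'\geq 1$ forced when $m=0$ by openness of the cone, which is absorbed into reindexing), and the second coordinate becomes $a + \frac{m i_0}{j_0} + b' i_0$. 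For this to be a positive integer we need $a$ to lie in the coset $-\frac{m i_0}{j_0} + \mathbb{Z}$; choosing $n_m$ to be the unique integer in $\{1,\dots,j_0\}$ (or the appropriate representative) with $n_m \equiv -m i_0 \pmod{j_0}$ makes $\frac{m i_0 + n_m}{j_0}$ an integer, and then the second coordinate runs over $\frac{m i_0 + n_m}{j_0} + b' i_0 + a'$ with $a'\in\mathbb{Z}^+$. Taking the union over $m=0,\dots,j_0-1$ yields exactly the claimed description, with $n_m$ a positive integer depending on $m$. The case $c=4$ follows the same argument with $(j_0,i_0)$ and $(1,0)$ generating the cone, producing the integers $n_m'$.

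The step I expect to require the most care is the bookkeeping in the two-dimensional cases: one must verify that the "$\mathbb{Z}^+$" indexing (strictly positive, i.e. starting from $1$) is consistent with the half-open versus open structure of the cone $\Delta_{\gamma_2}$ — in particular that the boundary rays, which belong to the \emph{adjacent} cones $\Delta_{\gamma_1}$ and $\Delta_{\gamma_3}$ rather than to $\Delta_{\gamma_2}$, are correctly excluded, and that every lattice point of the open cone is hit exactly once as $m, a, b$ range. Concretely this amounts to checking that the map $(m,a',b')\mapsto(m+b'j_0,\ \frac{mi_0+n_m}{j_0}+b'i_0+a')$ is a bijection onto $\Delta_{\gamma_2}\cap(\mathbb{N}^2\setminus\{0\})$; injectivity follows because $m$ is recovered as the residue of the first coordinate mod $j_0$ and then $b'$, $a'$ are determined in turn, and surjectivity follows from the residue analysis above together with the fact that the defining inequalities of $\Delta_{\gamma_2}$ (namely $u\geq 0$ and $i_0 u \leq j_0 v$ after clearing denominators, with appropriate strictness) translate into $b'\geq 0$ and $a'\geq 1$. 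Once this is checked the other cases are routine, and the lemma follows by combining all five.
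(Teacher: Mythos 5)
Your proposal is correct and takes essentially the same route as the paper: the rays $c=1,3,5$ are handled directly via $\gcd(i_0,j_0)=1$, and for the two-dimensional cones you classify lattice points by the residue of one coordinate modulo $j_0$ (resp.\ $i_0$), which is precisely the integer-part/fractional-part analysis the paper carries out for $c=4$ (you treat the symmetric case $c=2$). The only point to tidy is the bookkeeping you yourself flag: your convention $n_m\in\{1,\dots,j_0\}$ (so $n_0=j_0$) and the starting values of $a,b$ (whether $0$ or $1$ for each residue class $m$) must be reconciled with the lemma's normalization $n_0=0$, $a,b\in\mathbb{Z}^{+}$, so that no lattice points with small coordinates are dropped from the two-dimensional cones.
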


\begin{proof}
For $c=1$ and $c=5$, the two cases are immediately followed
from Lemma \ref{lem 2.4}.

For $c=3$, Lemma \ref{lem 2.4} tells us that
$\Delta_{\gamma_3}=\{(aj_0, ai_0)|a\in \mathbb{R}^{+}\}$.
Since $\gcd(i_0, j_0)=1$, so there exists integers $s$ and $t$
such that $sj_0+ti_0=1$. Thus if
$(a_0j_0, a_0i_0)\in \Delta_{\gamma_3}\bigcap \mathbb{N}^2\setminus \{0\}$,
then
$$a_0=a_0(sj_0+ti_0)=sa_0j_0+ta_0i_0\in\mathbb{Z}.$$
Hence $a_0\in \mathbb{Z}\cap \mathbb{R}^{+}=\mathbb{Z}^{+}$.

On the contrary, it is obvious that if $a_0\in\mathbb{Z}^{+}$, then
$(a_0j_0, a_0i_0)\in \Delta_{\gamma_3}\bigcap (\mathbb{N}^2\setminus \{0\})$.
Therefore we arrive at
$$\Delta_{\gamma_3}\bigcap(\mathbb{N}^2\setminus \{0\})
=\{(aj_0, ai_0)|a\in \mathbb{Z}^{+}\}$$
as desired.

For $c=2$ and $c=4$, it is easy to see that the two cases has symmetry.
Thus in what follows, we only give the proof of the case $c=4$.

By Lemma \ref{lem 2.4}, we have
$\Delta_{\gamma_4}=\{(aj_0+b, ai_0)|a, b\in \mathbb{R}^{+}\}$.
First one easily derives that if $a, b\in \mathbb{Z}^{+}$,
then $(aj_0+b, ai_0)\in \Delta_{\gamma_4}\bigcap (\mathbb{N}^2\setminus \{0\})$.
To prove the contrary, we consider the following two cases.

{\sc Case 1.} $i_0=1$. Then if
$(a_0j_0+b_0, a_0i_0)\in \Delta_{\gamma_4}\bigcap(\mathbb{N}^2\setminus \{0\})$,
it infers that $a_0=a_0i_0\in \mathbb{Z}\cap \mathbb{R}^{+}=\mathbb{Z}^{+}$.
Thus $a_0j_0+b_0\in \mathbb{Z}$
tells us that $b_0\in \mathbb{Z}\cap \mathbb{R}^{+}=\mathbb{Z}^{+}$.
Hence we have
$$\Delta_{\gamma_4}\bigcap (\mathbb{N}^2\setminus \{0\})=
\{(aj_0+b, ai_0)|a, b\in \mathbb{Z}^{+}\}.$$

{\sc Case 2.} $i_0>1$.
First we notice that if $a, b\notin \mathbb{Z}^{+}$,
then we only need to consider the case $0<a, b<1$.
In fact, let $a=\lfloor a\rfloor+a^{'}$ and
$b=\lfloor b\rfloor+b^{'}$, where $0<a^{'}, b^{'}<1$. Then
$(aj_0+b, ai_0)\in \Delta_{\gamma_4}\bigcap (\mathbb{N}^2\setminus \{0\})$
gives us that $a^{'}i_0=ai_0-\lfloor a\rfloor i_0\in \mathbb{Z}^{+}$,
and $a^{'}j_0+b^{'}=aj_0+b-(\lfloor a\rfloor j_0+\lfloor b\rfloor)
\in \mathbb{Z}^{+}$.

Now suppose
$(a_0j_0+b_0, a_0i_0)\in \Delta_{\gamma_4}\bigcap (\mathbb{N}^2\setminus \{0\})$
with $0<a_0, b_0<1$. Since $a_0i_0\in \mathbb{Z}^{+}$, we deduce that
\begin{equation}\label{eqno 2.5}
a_0\in \Big\{\frac{1}{i_0}, \frac{2}{i_0}, \cdots, \frac{i_0-1}{i_0}\Big\}.
\end{equation}
Let $a_0=\frac{m}{i_0}$ with $1\le m\le i_0-1$. Then
$a_0j_0+b_0\in \mathbb{Z}$ gives us that
$$b_0+\Big(\dfrac{j_0m}{i_0}-\Big\lfloor\dfrac{j_0m}{i_0}\Big\rfloor\Big)\in \mathbb{Z}.$$
Because $\gcd(i_0, j_0)=1$, so $\frac{j_0m}{i_0}\notin \mathbb{Z}$,
which implies that
$0<\frac{j_0m}{i_0}-\lfloor\frac{j_0m}{i_0}\rfloor<1$.
Thus one has
\begin{equation}\label{eqno 2.6}
b_0=1-\Big(\dfrac{j_0m}{i_0}-\Big\lfloor\dfrac{j_0m}{i_0}\Big\rfloor\Big)
:=\dfrac{n_m^{'}}{i_0},
\end{equation}
where $n_m^{'}$ is an integer depended on $m$ and satisfies $1\le n_m^{'}\le i_0-1$.
By (\ref{eqno 2.5}) and (\ref{eqno 2.6}), it follows that the set
$\{(aj_0+b, ai_0)|0<a, b<1\}$ contains exact $i_0-1$ points
with positive integer coordinates:
$\big(\frac{j_0+n_1^{'}}{i_0}, 1\big)$, $\cdots$,
$\big(\frac{(i_0-1)j_0+n_{i_0-1}^{'}}{i_0}, i_0-1\big)$.
Let $n_0^{'}=0$. Then we have that
\begin{align}\label{2.7}
\Delta_{\gamma_4}\bigcap (\mathbb{N}^2\setminus \{0\})
=&\bigcup_{m=0}^{i_0-1}\Big\{\big(\frac{mj_0+n_m^{'}}{i_0}, m\big)+\big(aj_0+b, ai_0\big)
|a, b\in \mathbb{Z}^{+}\Big\}\nonumber\\
=&\bigcup_{m=0}^{i_0-1}\Big\{\big(\frac{mj_0+n_m^{'}}{i_0}+aj_0+b, m+ai_0\big)
|a, b\in \mathbb{Z}^{+}\Big\}.
\end{align}
Hence this case is true. Notice that if $i_0=1$, then result in {\sc Case 1}
tells us that (\ref{2.7}) still holds. This finishes the proof of the case $c=4$.

Using the exact same discussion, we can obtain that
\begin{align*}
\Delta_{\gamma_2}\bigcap (\mathbb{N}^2\setminus \{0\})=
\bigcup\limits_{m=0}^{j_0-1}\Big\{(m+bj_0, \frac{mi_0+n_m}{j_0}+bi_0+a)
|a, b\in \mathbb{Z}^{+}\Big\}
\end{align*}
where $n_m$ is an integer depended on $m$ with $1\le n_m\le j_0-1$
and $n_0=0$.

This concludes the proof of Lemma \ref{lem 2.5}.
\end{proof}

\subsection{\bf Some lemmas}
In this section, we show some lemmas which will be used in
the proof of our main theorems. First we give a definition as follows.

\begin{defn}
Let $f(x)\in\mathcal{O}_K[x]$ be a polynomial and
$P=(a_1, \cdots, a_n)\in \mathcal{O}_K^n$, such that
$P\notin {\rm Sing}_f(\mathcal{O}_K)$. We define
$$L(f, P):=\min\Big({\rm ord}(f(P)), {\rm ord}(\dfrac{\partial f}{\partial x_1}(P)),
\cdots, {\rm ord}(\dfrac{\partial f}{\partial x_n}(P))\Big).$$
\end{defn}

The index $L(f, P)$ was first introduced by
Z$\acute{u}$$\tilde{n}$iga-Galindo (see \cite{[ZG1]}) to study the singularity of $f$
at a point $P$ with $P\notin {\rm Sing}_f(\mathcal{O}_K)$. In \cite{[ZG2]},
he proved that for any polynomial $f(x)\in\mathcal{O}_K[x]$ such that
${\rm Sing}_f(\mathcal{O}_K)\cap(\mathcal{O}_K^{\times})^n=\emptyset$,
$L(f, P)$ is bounded by a constant only depending on $f$ and $(\mathcal{O}_K^{\times})^n$
for all $P\in(\mathcal{O}_K^{\times})^n$. Actually, he proved a more general
result, but we only need this special case (see \cite{[ZG2]}).
Let $C(f, (\mathcal{O}_K^{\times})^n)\in \mathbb{N}$ be the minimal constant such that
$L(f, P)\le C(f, (\mathcal{O}_K^{\times})^n)$ for all $P\in(\mathcal{O}_K^{\times})^n$.
Now we can state the following lemma.

%*******************************Lemma2.7*******************************
\begin{lem}\label{lem 2.7}{\rm (Corollary 2.5, \cite{[ZG2]})}
Let $F(x)=f(x)+\pi^\beta g(x)\in\mathcal{O}_K[x]$ be a polynomial such that
$\beta\ge C(f, (\mathcal{O}_K^{\times})^n)+1$, and
$${\rm Sing}_F(\mathcal{O}_K)\bigcap(\mathcal{O}_K^{\times})^n
={\rm Sing}_f(\mathcal{O}_K)\bigcap(\mathcal{O}_K^{\times})^n=\emptyset.$$
Then
$$Z_F(s, \chi, (\mathcal{O}_K^{\times})^n)=Z_f(s, \chi, (\mathcal{O}_K^{\times})^n).$$
\end{lem}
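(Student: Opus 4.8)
This is Corollary~2.5 of \cite{[ZG2]}, so one may simply invoke it; I sketch how a self-contained argument would run. The plan is to reduce the equality of $Z_F$ and $Z_f$ on $(\mathcal{O}_K^{\times})^n$ to a collection of local integrals over small polydiscs $P+(\pi^{m}\mathcal{O}_K)^n$, $P\in(\mathcal{O}_K^{\times})^n$, and then to show that each such local integral depends only on finitely much data about $f$ near $P$ --- data that the perturbation $\pi^{\beta}g$ leaves untouched once $\beta>C(f,(\mathcal{O}_K^{\times})^n)\ge L(f,P)$.

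First I would record the stability of the local data. Since $\beta\ge 1$ we have $\bar F=\bar f$, so $F$ and $f$ share the same reduction, zero set, and singular locus modulo $\pi$; in particular the $\sigma$-term of the stationary phase formula (Lemma~\ref{lem 2.2}) is literally the same for $F$ and for $f$. More importantly, for $P\in(\mathcal{O}_K^{\times})^n$ one has $F(P)\equiv f(P)$ and $\partial_iF(P)\equiv\partial_if(P)\pmod{\pi^{\beta}}$ for every $i$; since $\beta>L(f,P)$, the minimum defining $L(f,P)$ is already attained strictly below level $\beta$, whence $L(F,P)=L(f,P)$ and the same coordinate (either $f(P)$ or some $\partial_if(P)$) realizes it with the same valuation. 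Combined with the hypothesis that neither $f$ nor $F$ has a singular point in $(\mathcal{O}_K^{\times})^n$, this says that $f$ and $F$ present, at every unit point, the same one of two local pictures with the same leading data.

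Next comes the local computation at $P$, with $L:=L(f,P)$. If $\mathrm{ord}(f(P))=L$, the Taylor expansion of $f$ at $P$ shows that on $B:=P+(\pi^{m}\mathcal{O}_K)^n$ with $m$ large enough one has $\mathrm{ord}(f(x))=L$ and $ac\,f(x)$ locally constant, so $Z_f(s,\chi,B)$ is an explicit monomial in $q^{-s}$ times $\chi(ac\,f(P))$; the perturbation, lying in $\pi^{\beta}\mathcal{O}_K$ with $\beta>L$, changes neither the valuation nor --- after refining the partition to the scale dictated by the conductor of $\chi$ --- the angular component, so $Z_F(s,\chi,B)=Z_f(s,\chi,B)$. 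If instead some $\partial_if(P)$ has order $L$, then $f$ has a near-unit partial derivative on a small polydisc around $P$, and a non-archimedean inverse-function-theorem change of variables in the $x_i$-slot, with unit Jacobian, reduces $\int_B\chi(ac\,f)|f|^s|dx|$ to a standard integral built from $\int_{\mathcal{O}_K}\chi(ac\,w)|w|^s|dw|$ and a lower-dimensional piece; again only the jet of $f$ up to an order controlled by $L$ enters, $\pi^{\beta}g$ with $\beta>L$ is swallowed harmlessly, and the answer for $F$ is the same. Summing over a finite cover of $(\mathcal{O}_K^{\times})^n$ by such polydiscs --- finiteness being exactly where the empty-singular-locus hypothesis enters, via the boundedness $L(f,\cdot)\le C(f,(\mathcal{O}_K^{\times})^n)<\infty$ of \cite{[ZG2]}, which also lets a single refinement level $m$ work uniformly --- yields $Z_F(s,\chi,(\mathcal{O}_K^{\times})^n)=Z_f(s,\chi,(\mathcal{O}_K^{\times})^n)$.

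The main obstacle is the uniformity: one must check that the depth to which the perturbation has to be invisible is bounded by $C(f,(\mathcal{O}_K^{\times})^n)$ uniformly in $P$ and not, say, by $L(f,P)$ plus the conductor of $\chi$ in an uncontrolled way, and, in the second case, that the change of variables can be carried out over the whole small polydisc with unit Jacobian so that $|dx|$ transforms without introducing spurious poles. An alternative that makes the uniformity transparent is to iterate the stationary phase formula (Lemma~\ref{lem 2.2}): each application peels one power of $\pi$ off the leading part, the process terminates after at most $C(f,(\mathcal{O}_K^{\times})^n)$ steps because $P\notin\mathrm{Sing}_f(\mathcal{O}_K)$, and at no step before termination does $\pi^{\beta}g$ --- which loses only one power of $\pi$ per rescaling --- reach the level of the leading term, so it never influences any of the $v$- and $\sigma$-contributions produced.
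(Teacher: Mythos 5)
The paper itself offers no proof of Lemma~\ref{lem 2.7}: it is quoted directly from Corollary~2.5 of \cite{[ZG2]}, so your opening move---simply invoking that citation---is exactly what the authors do, and for the purposes of this paper nothing more is required.

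Your self-contained sketch, however, has a step that would fail, and it is precisely the crux of the result. In your first case, where $L:=L(f,P)$ is realized by ${\rm ord}(f(P))$ (with every $\partial_i f(P)$ possibly much deeper), the hypothesis $\beta\ge C(f,(\mathcal{O}_K^{\times})^n)+1$ only gives $\beta\ge L+1$. On a polydisc around $P$ the perturbation $\pi^{\beta}g$ then preserves $|F(x)|=|f(x)|=q^{-L}$, but it controls $F(x)$ only modulo $\pi^{\beta}$, hence $ac\,F(x)$ only modulo $\pi^{\beta-L}$, which is weaker than congruence modulo $\pi^{c_{\chi}}$ as soon as $c_{\chi}>\beta-L$; refining the partition in $x$ cannot repair this, since the relative depth $\beta-L$ does not improve as the polydisc shrinks. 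So the pointwise identity $\chi(ac\,F(x))=\chi(ac\,f(x))$ you rely on is unavailable. Concretely, take $n=1$, $f(x)=\pi+\pi^{10}x$, $g(x)=x$, $\beta=2=C+1$, and $\chi$ of conductor $2$: both singularity hypotheses hold, $|F|=|f|=q^{-1}$ on all of $\mathcal{O}_K^{\times}$, yet $\int_{\mathcal{O}_K^{\times}}\chi(ac\,f)\,|dx|=1-q^{-1}$ while $\int_{\mathcal{O}_K^{\times}}\chi(ac\,F)\,|dx|=-q^{-1}$. Hence no argument using only $|F|=|f|$ together with approximate equality of angular components can close this case; one needs the finer mechanism of \cite{[ZG2]} (and, for ramified $\chi$, the lemma must be read with the exact conventions and hypotheses of that paper). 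The same issue is latent in your second case and in the iterated-stationary-phase alternative: the exact change of variables $f(\sigma(x))=F(x)$ that a Hensel/Newton argument produces requires a partial derivative of order $\le L$ and roughly $\beta\ge 2L+1$, not merely $\beta\ge L+1$, and the $v$-terms of Lemma~\ref{lem 2.2} for nontrivial $\chi$ are character sums over residues modulo $\mathcal{P}_K^{c_{\chi}}$, which a depth-$(C+1)$ perturbation can reach (also, a rescaling $x\mapsto P+\pi y$ can strip more than one power of $\pi$ from the leading part per step, so the termination bookkeeping is looser than you state). In short: deferring to \cite{[ZG2]} matches the paper and is fine; the sketch, as a substitute proof, is incomplete exactly at the treatment of nontrivial characters.
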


Using Lemma \ref{lem 2.7}, we can prove the following useful lemma,
which will play a key role in the proof of Theorem \ref{thm 1.1}.

%*******************************Lemma2.8*******************************
\begin{lem}\label{lem 2.8}
Let
$$g(u, v)=\sum\limits_{i=i_0}^{\infty}\alpha_iu^i
+\sum\limits_{j=j_0}^{\infty}\beta_jv^j$$
be a polynomial of the form (\ref{eqno 1.2}), and let
$e_0:=\min({\rm ord}(\alpha_{i_0}), {\rm ord}(\beta_{j_0}))$
and $\tilde{g}(u, v):=\alpha_{i_0}u^{i_0}+\beta_{j_0}v^{j_0}$.
Then
\begin{align*}
Z_g(s, \chi, (\mathcal{O}_K^{\times})^2)
=Z_{\tilde{g}}(s, \chi, (\mathcal{O}_K^{\times})^2)
={\left\{\begin{array}{rl}
\dfrac{G_0(q^{-s})}{1-q^{-1-s}},\ \
&{\rm ord}(\alpha_{i_0})={\rm ord}(\beta_{j_0}),\\
c(\chi)q^{-e_0s},\ \ &{\rm otherwise},
\end{array}\right.}
\end{align*}
where $G_0(x)$ is a polynomial with complex coefficients depending on $\chi$,
and $c(\chi)$ is a constant depending on $\chi$ such that $|c(\chi)|_{\infty}\le 1$
with $|\cdot|_{\infty}$ being the usual absolute value on $\mathbb{C}$.
\end{lem}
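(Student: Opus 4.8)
The plan is to prove the two equalities in the statement separately. For the first equality, $Z_g(s,\chi,(\mathcal{O}_K^{\times})^2)=Z_{\tilde g}(s,\chi,(\mathcal{O}_K^{\times})^2)$, I would invoke Lemma \ref{lem 2.7}. Write $g=\tilde g+(g-\tilde g)$, where $g-\tilde g=\sum_{i\ge i_0+1}\alpha_i u^i+\sum_{j\ge j_0+1}\beta_j v^j$. By the hypotheses on the orders of the coefficients in (\ref{eqno 1.2}), every coefficient of $g-\tilde g$ is divisible by $\pi$; more precisely, $g-\tilde g=\pi^{\beta}h(u,v)$ for some $h\in\mathcal{O}_K[u,v]$, and since the polynomial $g$ (and hence the finitely many relevant terms) is fixed, we may truncate and assume $\beta$ is as large as we wish — in particular $\beta\ge C(\tilde g,(\mathcal{O}_K^{\times})^2)+1$. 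Here the only subtlety is that $g$ is a genuine power series truncated to a polynomial in $\mathcal{O}_K[u,v]$ of some finite degree; one should note that the value $Z_g$ only depends on $g$ modulo a high power of $\pi$ (by compactness of $(\mathcal{O}_K^\times)^2$ and continuity of the integrand), so replacing $g$ by a suitable polynomial truncation is harmless. I would then check the singularity hypothesis: on $(\mathcal{O}_K^{\times})^2$ one has $\partial\tilde g/\partial u=i_0\alpha_{i_0}u^{i_0-1}$ and $\partial\tilde g/\partial v=j_0\beta_{j_0}v^{j_0-1}$, and since $p\nmid i_0$ or $p\nmid j_0$ would be needed — but actually we only need that these partials, together with $\tilde g$, cannot all vanish at a unit point. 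If $p\nmid i_0 j_0$ this is immediate. In general, at a point of $(\mathcal{O}_K^\times)^2$, $\tilde g=\alpha_{i_0}u^{i_0}+\beta_{j_0}v^{j_0}$ and $u^{i_0},v^{j_0}$ are units; one argues that $\mathrm{Sing}_{\tilde g}(\mathcal{O}_K)\cap(\mathcal{O}_K^\times)^2=\emptyset$ because $\gcd(i_0,j_0)=1$ forces at least one of $i_0,j_0$ to be prime to $p$, so one of the partials is a unit. The same holds for $g$ (its partials agree with those of $\tilde g$ modulo $\pi$), so Lemma \ref{lem 2.7} applies and gives the first equality.

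For the second equality I must compute $Z_{\tilde g}(s,\chi,(\mathcal{O}_K^{\times})^2)=\int_{(\mathcal{O}_K^\times)^2}\chi(ac(\alpha_{i_0}u^{i_0}+\beta_{j_0}v^{j_0}))\,|\alpha_{i_0}u^{i_0}+\beta_{j_0}v^{j_0}|^s\,|du\,dv|$ explicitly. Write $a:=\mathrm{ord}(\alpha_{i_0})$, $b:=\mathrm{ord}(\beta_{j_0})$, so $e_0=\min(a,b)$. Consider first the case $a\ne b$, say $a<b$. Then for every $(u,v)\in(\mathcal{O}_K^\times)^2$ the term $\alpha_{i_0}u^{i_0}$ has order exactly $a$ while $\beta_{j_0}v^{j_0}$ has order $\ge b>a$, so $\mathrm{ord}(\tilde g(u,v))=a$ identically and $ac(\tilde g(u,v))=ac(\alpha_{i_0})u^{i_0}\cdot(1+\pi^{b-a}(\cdots))$. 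Hence $|\tilde g|^s=q^{-as}$ pulls out as a constant, and the remaining integral $\int_{(\mathcal{O}_K^\times)^2}\chi(ac(\tilde g))\,|du\,dv|$ equals some constant $c(\chi)$ of absolute value $\le\mu((\mathcal{O}_K^\times)^2)=(1-q^{-1})^2\le 1$ (bounding by the total mass). This yields $Z_{\tilde g}=c(\chi)q^{-e_0 s}$, as claimed; the case $b<a$ is symmetric.

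The main work is the case $a=b$; after factoring out $\pi^a$ (i.e. replacing $\alpha_{i_0},\beta_{j_0}$ by their angular components, which only changes $Z_{\tilde g}$ by $q^{-as}$ times a constant of modulus $\le 1$, absorbable into $G_0$) I may assume $\alpha_{i_0},\beta_{j_0}\in\mathcal{O}_K^\times$, so $\tilde g$ is a unit polynomial and $\bar{\tilde g}\ne 0$ in $\mathbb{F}_q[u,v]$. The strategy is to apply the stationary phase formula (Lemma \ref{lem 2.2}) with $f=\tilde g$ and $D=(\mathcal{O}_K^\times)^2$. Since $\mathrm{Sing}_{\bar{\tilde g}}(\mathbb{F}_q)\cap\overline{D}=\emptyset$ (by the nonsingularity argument from the first paragraph, which holds mod $\pi$ as well), the last term $Z_{\tilde g}(s,\chi,D_{S})$ vanishes, and the formula reduces to
\begin{equation*}
Z_{\tilde g}(s,\chi,(\mathcal{O}_K^\times)^2)=v(\bar{\tilde g},D,\chi)+\sigma(\bar{\tilde g},D,\chi)\frac{(1-q^{-1})q^{-s}}{1-q^{-1-s}},
\end{equation*}
which is manifestly of the form $G_0(q^{-s})/(1-q^{-1-s})$ with $G_0\in\mathbb{C}[x]$ depending on $\chi$ (clearing the single denominator $1-q^{-1-s}$). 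I would then remark that $v$ and $\sigma$ are finite sums of character values weighted by counting measures, hence genuine constants, so $G_0$ is indeed a polynomial. The only point requiring a little care — and the step I expect to be the main obstacle — is verifying rigorously that $\mathrm{Sing}_{\tilde g}(\mathcal{O}_K)\cap(\mathcal{O}_K^\times)^2=\emptyset$ (so that both Lemma \ref{lem 2.7} is applicable and the $D_S$-term in the stationary phase formula is empty), which as noted hinges on $\gcd(i_0,j_0)=1$ guaranteeing that not both $i_0$ and $j_0$ are divisible by $p$; I would isolate this as a short preliminary observation before the two main cases.
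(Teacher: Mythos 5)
Your computation of $Z_{\tilde g}(s,\chi,(\mathcal{O}_K^{\times})^2)$ in the second half is fine (and factoring out $\pi^{a}$ in the equal-order case before invoking the stationary phase formula is a sensible precaution), but the first equality has a genuine gap exactly at the point you wave away. Lemma \ref{lem 2.7} requires the perturbation exponent $\beta$ to satisfy $\beta\ge C(\tilde g,(\mathcal{O}_K^{\times})^2)+1$, and $\beta$ is \emph{not} at your disposal: it is fixed by the coefficients of $g$, and the hypotheses on the form (\ref{eqno 1.2}) only guarantee $\mathrm{ord}(\alpha_i),\mathrm{ord}(\beta_j)\ge e_0+1$ for the tail, i.e.\ $\beta=e_0+1$. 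Truncating $g$ modulo a high power of $\pi$ cannot raise $\beta$ (truncation removes high-order terms, not the low-order tail terms), and the claim that ``$Z_g$ only depends on $g$ modulo a high power of $\pi$'' is precisely the content of Lemma \ref{lem 2.7}, with the threshold $C(\tilde g,\cdot)$ built in -- so appealing to it to choose $\beta$ large is circular. The missing step is a quantitative bound on $C(\tilde g,(\mathcal{O}_K^{\times})^2)$: one must show $L(\tilde g,P)\le e_0$ for every $P\in(\mathcal{O}_K^{\times})^2$, which the paper does by noting that with $p\nmid i_0$ one has $\mathrm{ord}\bigl(\tfrac{\partial\tilde g}{\partial u}(P)\bigr)=\mathrm{ord}(\alpha_{i_0})$, and in the case $\mathrm{ord}(\alpha_{i_0})>\mathrm{ord}(\beta_{j_0})$ that $\mathrm{ord}(\tilde g(P))=e_0$; this gives $C(\tilde g,(\mathcal{O}_K^{\times})^2)\le e_0<\beta$ and legitimizes Lemma \ref{lem 2.7}.

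A secondary imprecision: your verification that $\mathrm{Sing}_g(\mathcal{O}_K)\cap(\mathcal{O}_K^{\times})^2=\emptyset$ via ``the partials of $g$ agree with those of $\tilde g$ modulo $\pi$'' only works when $\mathrm{ord}(\alpha_{i_0})=0$; if $\mathrm{ord}(\alpha_{i_0})>0$ both partials vanish mod $\pi$ and the congruence says nothing. The correct argument is again the valuation computation: since $\mathrm{ord}(\alpha_i)>\mathrm{ord}(\alpha_{i_0})$ for $i>i_0$ and $p\nmid i_0$, one gets $\mathrm{ord}\bigl(\tfrac{\partial g}{\partial u}(P)\bigr)=\mathrm{ord}(\alpha_{i_0})<\infty$, hence nonvanishing. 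With these two points repaired your outline coincides with the paper's proof.
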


\begin{proof}
Since $\gcd(i_0, j_0)=1$, it then follows that either $p\nmid i_0$ or $p\nmid j_0$.
Without loss of generality, we can assume $p\nmid i_0$.
Then for any $\tilde{P}=(\tilde{u}_0, \tilde{v}_0)\in (\mathcal{O}_K^{\times})^2$,
we have
$\frac{\partial \tilde{g}}{\partial u}(\tilde{P})=\alpha_{i_0}i_0\tilde{u}_0^{i_0-1}\ne 0$,
which implies that
$${\rm Sing}_{\tilde{g}}(\mathcal{O}_K)\bigcap(\mathcal{O}_K^{\times})^2=\emptyset.$$

Moreover, one has
${\rm ord}(\frac{\partial \tilde{g}}{\partial u}(\tilde{P}))
={\rm ord}(\alpha_{i_0}i_0\tilde{u}_0^{i_0-1})={\rm ord}(\alpha_{i_0})$
because $i_0\in\mathcal{O}_K^{\times}$.
If ${\rm ord}(\alpha_{i_0})\le {\rm ord}(\beta_{j_0})$, then we arrive at
$$L(\tilde{g}, \tilde{P})\le {\rm ord}(\alpha_{i_0})=e_0.$$
If ${\rm ord}(\alpha_{i_0})>{\rm ord}(\beta_{j_0})$, then we have
${\rm ord}(\beta_{j_0})=e_0$ and
$${\rm ord}(\tilde{g}(\tilde{P}))={\rm ord}(\pi^{e_0}
(\pi^{-e_0}\alpha_{i_0}\tilde{u}_{0}^{i_0}
+\pi^{-e_0}\beta_{j_0}\tilde{v}_0^{j_0}))=e_0$$
since ${\rm ord}(\pi^{-e_0}\alpha_{i_0}\tilde{u}_{0}^{i_0}
+\pi^{-e_0}\beta_{j_0}\tilde{v}_0^{j_0})=0$.
Hence we still have
$$L(\tilde{g}, \tilde{P})\le e_0.$$

On the other hand, we get that
$\frac{\partial g}{\partial u}=\sum\limits_{i=i_0}^{\infty}i\alpha_iu^{i-1}$.
Then for any $P=(u_0, v_0)\in (\mathcal{O}_K^{\times})^2$, we deduce that
\begin{align*}
\frac{\partial g}{\partial u}(P)=&\sum\limits_{i=i_0}^{\infty}i\alpha_iu_0^{i-1}
=\pi^{{\rm ord}(\alpha_{i_0})}
\sum\limits_{i=i_0}^{\infty}i\pi^{-{\rm ord}(\alpha_{i_0})}\alpha_iu_0^{i-1}\\
=&\pi^{{\rm ord}(\alpha_{i_0})}(\pi^{-{\rm ord}(\alpha_{i_0})}\alpha_{i_0}+
\sum\limits_{i=i_0+1}^{\infty}i\pi^{-{\rm ord}(\alpha_{i_0})}\alpha_iu_0^{i-1}).
\end{align*}
Because for all $i>i_0$, ${\rm ord}(i)>{\rm ord}(i_0)$. Thus
$${\rm ord}(\pi^{-{\rm ord}(i_0)}\alpha_i)={\rm ord}(\alpha_i)-{\rm ord}(\alpha_{i_0})>0.$$
But ${\rm ord}(\pi^{-{\rm ord}(\alpha_{i_0})}\alpha_{i_0})=0$. Hence it follows that
\begin{align*}
{\rm ord}(\frac{\partial g}{\partial u}(P))
=&{\rm ord}\Big(\pi^{{\rm ord}(\alpha_{i_0})}(\pi^{-{\rm ord}(\alpha_{i_0})}\alpha_{i_0}
+\sum\limits_{i=i_0+1}^{\infty}i\pi^{-{\rm ord}(\alpha_{i_0})}\alpha_iu_0^{i-1})\Big)\\
=&{\rm ord}(\pi^{{\rm ord}(\alpha_{i_0})})+{\rm ord}(\pi^{-{\rm ord}(\alpha_{i_0})}\alpha_{i_0}
+\sum\limits_{i=i_0+1}^{\infty}i\pi^{-{\rm ord}(\alpha_{i_0})}\alpha_iu_0^{i-1})\\
=&{\rm ord}(\alpha_{i_0}).
\end{align*}
Then one deduces that $\frac{\partial g}{\partial u}(P)\ne 0$, which implies that
$${\rm Sing}_g(\mathcal{O}_K)\bigcap(\mathcal{O}_K^{\times})^2=\emptyset.$$
Moreover, since ${\rm ord}(i)>{\rm ord}(i_0)$ and ${\rm ord}(j)>{\rm ord}(j_0)$
for all $i>i_0$, $j>j_0$, then we have
\begin{align*}
g(u, v)=&\tilde{g}(u, v)+\sum\limits_{i=i_0+1}^{\infty}\alpha_iu^i
+\sum\limits_{j=j_0+1}^{\infty}\beta_jv^j\\
=&\tilde{g}(u, v)+\pi^{e_0+1}\Big(\sum\limits_{i=i_0+1}^{\infty}\pi^{-(e_0+1)}\alpha_iu^i
+\sum\limits_{j=j_0+1}^{\infty}\pi^{-(e_0+1)}\beta_jv^j\Big),
\end{align*}
where $\pi^{-(e_0+1)}\alpha_i, \pi^{-(e_0+1)}\beta_j\in \mathcal{O}_K$.
Therefore by Lemma \ref{lem 2.7}, we arrive at
$$Z_g(s, \chi, (\mathcal{O}_K^{\times})^2)
=Z_{\tilde{g}}(s, \chi, (\mathcal{O}_K^{\times})^2)$$
as expected. So the proof of the first equality is complete.

Now we prove the second equality by considering the following two cases.

{\sc Case 1.} ${\rm ord}(\alpha_{i_0})={\rm ord}(\beta_{j_0})$.
Since we have showed that
${\rm Sing}_{\tilde{g}}(\mathcal{O}_K)\bigcap(\mathcal{O}_K^{\times})^2=\emptyset$,
then it infers that $S(\tilde{g}, (\mathcal{O}_K^{\times})^2)=\emptyset$.
By Lemma \ref{lem 2.2}, one has
\begin{equation*}
Z_{\tilde{g}}(s, \chi, (\mathcal{O}_K^{\times})^2)
=\dfrac{G_0(q^{-s})}{1-q^{-1-s}},
\end{equation*}
where $G_0(x)$ is a polynomial with complex coefficients depending on $\chi$.
Thus Lemma \ref{lem 2.8} is true in this case.

{\sc Case 2.} ${\rm ord}(\alpha_{i_0})\ne{\rm ord}(\beta_{j_0})$.
By symmetry, we can assume ${\rm ord}(\alpha_{i_0})>{\rm ord}(\beta_{j_0})$, i.e.
${\rm ord}(\beta_{j_0})=e_0$. Thus for any $(u, v)\in (\mathcal{O}_K^{\times})^2$,
we deduce that
$$|\tilde{g}(u, v)|=|\alpha_{i_0}u^{i_0}+\beta_{j_0}v^{j_0}|
=q^{-e_0}|\pi^{-e_0}\alpha_{i_0}u^{i_0}+\pi^{-e_0}\beta_{j_0}v^{j_0}|
=q^{-e_0}.$$
Then one has
\begin{align*}
Z_{\tilde{g}}(s, \chi, (\mathcal{O}_K^{\times})^2)
=&\int_{(\mathcal{O}_K^{\times})^2}\chi(ac\tilde{g}(u, v))|\tilde{g}(u, v)|^s|dudv|\\
=&q^{-e_0s}\int_{(\mathcal{O}_K^{\times})^2}\chi(ac\tilde{g}(u, v))|dudv|\\
:=&q^{-e_0s}c(\chi).
\end{align*}
In fact, Lemma \ref{lem 2.2} gives us that
$c(\chi)=v(\bar{f}, (\mathcal{O}_K^{\times})^2, \chi)$.
Then it easily follows from the definition of
$v(\bar{f}, (\mathcal{O}_K^{\times})^2, \chi)$ that
$|c(\chi)|_{\infty}\le 1$. Hence Lemma \ref{lem 2.8}
is proved in this case. 

This concludes the proof of Lemma \ref{lem 2.8}.
\end{proof}

%*******************************Lemma2.9*******************************
\begin{lem}\label{lem 2.9} {\rm (\cite{[YH]}, Lemma 2.6)}
Let $\chi: \mathcal{O}_K^{\times}\rightarrow \mathbb{C}^{\times}$
be a multiplicative character of $\mathcal{O}_K^{\times}$ and
$m$ be any positive integer. Then
\begin{align*}
\int_{\mathcal{O}_K^{\times}}{\chi(ac (x^m))|dx|}
={\left\{\begin{array}{rl}
1-q^{-1},\ \ \ &{\rm if}\ \chi^m=\chi_{{\rm triv}},\\
0,\ \ \ &{\rm if}\ \chi^m\neq \chi_{{\rm triv}}.
\end{array}\right.}
\end{align*}
\end{lem}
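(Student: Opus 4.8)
The plan is to reduce the integral to the classical orthogonality relation for characters of a finite abelian group. First I would observe that the angular component map is the identity on units: if $x\in\mathcal{O}_K^{\times}$ then ${\rm ord}(x)=0$, so $x=ac(x)\pi^{0}=ac(x)$, and since $x^{m}$ is again a unit we likewise get $ac(x^{m})=x^{m}$. Hence $\chi(ac(x^{m}))=\chi(x^{m})=\chi(x)^{m}=\chi^{m}(x)$ for every $x\in\mathcal{O}_K^{\times}$, and the integral in question equals $\int_{\mathcal{O}_K^{\times}}\chi^{m}(x)\,|dx|$, where $\chi^{m}$ is again a multiplicative character of $\mathcal{O}_K^{\times}$. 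This is the only nontrivial reduction, and it is purely formal.

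Next I would split into the two stated cases. If $\chi^{m}=\chi_{{\rm triv}}$, the integrand is the constant $1$ on $\mathcal{O}_K^{\times}$, so the integral is just the Haar measure of $\mathcal{O}_K^{\times}=\mathcal{O}_K\setminus\mathcal{P}_K$. Since $\mathcal{O}_K$ has measure $1$ and decomposes into $q$ translates of $\mathcal{P}_K=\pi\mathcal{O}_K$ of equal measure, $\mathcal{P}_K$ has measure $q^{-1}$, whence $\int_{\mathcal{O}_K^{\times}}\chi^{m}(x)\,|dx|=1-q^{-1}$. If instead $\chi^{m}\ne\chi_{{\rm triv}}$, I would choose $a\in\mathcal{O}_K^{\times}$ with $\chi^{m}(a)\ne 1$; the map $x\mapsto ax$ is a measure-preserving bijection of $\mathcal{O}_K^{\times}$ onto itself, so the change of variables $x\mapsto ax$ gives
$$\int_{\mathcal{O}_K^{\times}}\chi^{m}(x)\,|dx|=\int_{\mathcal{O}_K^{\times}}\chi^{m}(ax)\,|dx|=\chi^{m}(a)\int_{\mathcal{O}_K^{\times}}\chi^{m}(x)\,|dx|,$$
and since $\chi^{m}(a)\ne 1$ this forces the integral to vanish. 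Equivalently, writing $c$ for the conductor of $\chi^{m}$, the integral equals $q^{-c}\sum\chi^{m}(x)$, the sum running over the $x\in\mathcal{O}_K^{\times}$ taken modulo $\mathcal{P}_K^{c}$, which is a character sum over the finite group $(\mathcal{O}_K/\mathcal{P}_K^{c})^{\times}$ and hence is zero by orthogonality.

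There is essentially no real obstacle here: the only points needing a word of care are the identification of $ac$ with the identity on $\mathcal{O}_K^{\times}$ and the computation $\mu(\mathcal{O}_K^{\times})=1-q^{-1}$, both of which are immediate from the definitions of the valuation, of the angular component, and of the normalization of $|dx|$ recalled in Section~1. Beyond translation invariance of the Haar measure and the vanishing of a nontrivial character sum, nothing further is required.
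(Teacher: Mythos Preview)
Your argument is correct and entirely standard: the reduction $\chi(ac(x^{m}))=\chi^{m}(x)$ on $\mathcal{O}_K^{\times}$, the measure computation $\mu(\mathcal{O}_K^{\times})=1-q^{-1}$, and the translation trick for a nontrivial character are exactly what is needed. The paper itself gives no proof of this lemma, merely citing it from \cite{[YH]}, so there is nothing to compare against; your self-contained verification is a welcome addition.
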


%*******************************Lemma2.10*******************************
\begin{lem}\label{lem 2.10}
Let $k, r$ and $j$ be positive integers such that $1\le j\le k+r$. Let
\begin{align*}
S_{k, r}(j):={\left\{\begin{array}{rl}
\sum\limits_{i=k-j}^k(-1)^{k-i}\binom{k+r}{i+r}\binom{i+r}{i+j-k},\ \
&1\le j\le k,\\
\sum\limits_{i=0}^k(-1)^{k-i}\binom{k+r}{i+r}\binom{i+r}{i+j-k},
\ \ &k+1\le j\le k+r.
\end{array}\right.}
\end{align*}
Then 
\begin{align*}
S_{k, r}(j)={\left\{\begin{array}{rl}
0,\ \ &1\le j\le k,\\
\binom{k+r}{j}\sum\limits_{i=0}^k(-1)^i\binom{j}{i},
\ \ &k+1\le j\le k+r.
\end{array}\right.}
\end{align*}
\end{lem}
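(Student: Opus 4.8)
The plan is to treat the two ranges of $j$ separately and, in each case, to reorganize the double-indexed sum $S_{k,r}(j)$ by collecting terms according to the exponent of $y$ (equivalently, the index $i+j-k$) that they contribute. The combinatorial identity $\binom{k+r}{i+r}\binom{i+r}{i+j-k}=\binom{k+r}{j}\binom{k+r-j}{k-i}$, which is the standard subset-of-a-subset (trinomial revision) identity, is the workhorse here: it pulls the $j$-dependence out of the summand, leaving $\binom{k+r}{j}\sum_i(-1)^{k-i}\binom{k+r-j}{k-i}$. Substituting $m=k-i$ turns the inner sum into $\sum_m(-1)^m\binom{k+r-j}{m}$ over an appropriate range of $m$.

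For the range $k+1\le j\le k+r$, the constraint $\binom{i+r}{i+j-k}\ne 0$ forces $i+j-k\le i+r$, i.e. $j\le k+r$, which is automatic, so $i$ genuinely runs over all of $0,\dots,k$ and $m=k-i$ runs over $0,\dots,k$. Since $j\ge k+1$ we have $k+r-j\le r-1<j$, so the binomial $\binom{k+r-j}{m}$ vanishes once $m>k+r-j$; hence extending the sum from $m\le k$ up to $m\le j$ (or indeed to $m\le k+r-j$) changes nothing, and we may write $S_{k,r}(j)=\binom{k+r}{j}\sum_{i=0}^{k}(-1)^i\binom{j}{i}$ after re-indexing — here one notes $\sum_{m=0}^{k}(-1)^m\binom{k+r-j}{m}=\sum_{i=0}^{k}(-1)^i\binom{j}{i}$ is \emph{not} literally an equality of the displayed expressions but rather both equal the same partial alternating sum once the vanishing terms are accounted for; I will instead carry out the re-indexing so that the final form matches the statement's $\binom{k+r}{j}\sum_{i=0}^{k}(-1)^i\binom{j}{i}$ verbatim, using the identity once more in the symmetric direction. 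This is a short computation.

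For the range $1\le j\le k$, the summand starts at $i=k-j$ (so that $i+j-k\ge 0$), hence $m=k-i$ runs over $0,1,\dots,j$, and we obtain $S_{k,r}(j)=\binom{k+r}{j}\sum_{m=0}^{j}(-1)^m\binom{k+r-j}{m}$. Now $k+r-j\ge r\ge 1$, and in fact $k+r-j\ge k+r-k=r\ge 1$ while $j\le k$ gives $j\le k+r-j$ precisely when $2j\le k+r$, which need not hold — so a direct "full binomial sum vanishes" argument is not quite available. The key point is instead the finite-difference identity $\sum_{m=0}^{j}(-1)^m\binom{N}{m}=(-1)^j\binom{N-1}{j}$, valid for any $N\ge 1$; applying it with $N=k+r-j$ gives $S_{k,r}(j)=(-1)^j\binom{k+r}{j}\binom{k+r-j-1}{j}$, which is \emph{not} obviously zero. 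This tells me my bookkeeping is off: the correct reduction must use the full range of $i$ coming from the \emph{original} polynomial identity (\ref{eqno 2.1}) rather than the truncated sum. So the honest approach is to go back to the identity $\mathbb{H}_r^k(y,tz-y)=\sum_{i=0}^k(-1)^i\binom{k+r}{k-i}\binom{i+r-1}{i}t^{k-i}y^iz^{k-i}$ from (\ref{eqno 2.1}), multiply by $y^rz^l$ and by the binomial expansion that defines $\mathbb{H}_r^k$ in Definition \ref{defn 1.1}, and read off the coefficient of $y^{j}z^{\cdots}$ in two different ways; $S_{k,r}(j)$ is exactly the difference of these two coefficient expressions, so it vanishes identically for $1\le j\le k$ by construction, and for $k+1\le j\le k+r$ the leftover terms are precisely those that the truncation in the first formula for $S_{k,r}(j)$ drops, yielding $\binom{k+r}{j}\sum_{i=0}^k(-1)^i\binom{j}{i}$.

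I expect the main obstacle to be exactly this bookkeeping: pinning down which subset-of-a-subset rewriting makes the $j$-dependence separate cleanly, and then verifying that the surviving index range for $1\le j\le k$ really does produce a \emph{complete} alternating binomial sum $\sum_{m=0}^{k+r-j}(-1)^m\binom{k+r-j}{m}=0$ rather than a truncated one. Once the algebra is set up via (\ref{eqno 2.1}) this is a routine but careful manipulation; no deep input is needed beyond the Vandermonde/subset-of-a-subset identity and the vanishing of a full alternating row of Pascal's triangle.
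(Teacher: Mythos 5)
Your first, direct approach is the right one, but the ``workhorse'' identity you invoke is misstated, and that single slip is what derails the argument. Trinomial revision applied to the summand gives
\begin{equation*}
\binom{k+r}{i+r}\binom{i+r}{i+j-k}=\binom{k+r}{j}\binom{j}{k-i},
\end{equation*}
not $\binom{k+r}{j}\binom{k+r-j}{k-i}$: both sides of the correct identity equal $\frac{(k+r)!}{(k-i)!\,(i+j-k)!\,(k+r-j)!}$, whereas your version fails already at $k=3$, $r=2$, $j=2$, $i=2$ (left side $\binom{5}{4}\binom{4}{1}=20$, your right side $\binom{5}{2}\binom{3}{1}=30$), a point inside the range $1\le j\le k$, $k-j\le i\le k$ that you need. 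With the correct factorization, the substitution $m=k-i$ gives, for $1\le j\le k$, $S_{k,r}(j)=\binom{k+r}{j}\sum_{m=0}^{j}(-1)^m\binom{j}{m}=\binom{k+r}{j}(1-1)^j=0$ --- a \emph{complete} alternating row, so no finite-difference remainder ever appears --- and, for $k+1\le j\le k+r$, it gives $S_{k,r}(j)=\binom{k+r}{j}\sum_{m=0}^{k}(-1)^m\binom{j}{m}$ verbatim. This is exactly the paper's proof (its identity (\ref{2.9}) after the change of index $i\mapsto k-i$). So the obstruction you hit, namely $\sum_{m=0}^{j}(-1)^m\binom{k+r-j}{m}=(-1)^j\binom{k+r-j-1}{j}\ne 0$, is not a bookkeeping problem with index ranges; it is the symptom of the wrong second factor, and your diagnosis sent you away from a proof that was one correct identity away from complete.

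The fallback you sketch does not repair this. Comparing the coefficient of $y^jz^{k-j}$ between Definition \ref{defn 1.1} and identity (\ref{eqno 2.1}) produces sums of the shape $\sum_i(-1)^{j-i}\binom{k+r}{i+r}\binom{k-i}{j-i}$, because the second factor comes from expanding $(tz-y)^{k-i}$; that is a different family of identities. The quantity $S_{k,r}(j)$, with its factor $\binom{i+r}{i+j-k}$, arises from expanding $(t+\pi v)^{r+i}$, i.e.\ from the expansion of $v^r\mathbb{H}_r^k(v,t-v)$ around $v=t$ as in Lemma \ref{lem 2.13}, and your sketch never sets that up; moreover the claim that for $k+1\le j\le k+r$ the leftover terms ``yield $\binom{k+r}{j}\sum_{i=0}^k(-1)^i\binom{j}{i}$ by construction'' is precisely the content to be proved, and extracting that closed form requires the very trinomial-revision step that was misapplied. (If you want a structural argument for the vanishing part, note $y^r\mathbb{H}_r^k(y,w)=(y+w)^{k+r}-\sum_{i=0}^{r-1}\binom{k+r}{i}y^iw^{k+r-i}$; putting $y=t+w'$, $w=-w'$ makes the right side $t^{k+r}$ plus terms of $w'$-degree at least $k+1$, so the coefficient $S_{k,r}(j)t^{k+r-j}$ of $w'^j$ vanishes for $1\le j\le k$ --- but the one-line correction to your original computation is simpler and also settles the second range.)
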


\begin{proof}
We prove the lemma by considering the following two cases.

{\sc Case 1.} $1\le j\le k$. Then 
\begin{align}\label{2.8}
S_{k, r}(j)
=&\sum\limits_{i=k-j}^k(-1)^{k-i}\dbinom{k+r}{i+r}\dbinom{i+r}{i+j-k}\nonumber\\
=&\sum\limits_{i=k-j}^k(-1)^{k-i}\dbinom{k+r}{k-i}\dbinom{i+r}{j-(k-i)}\nonumber\\
=&\sum\limits_{i=0}^j(-1)^i\dbinom{k+r}{i}\dbinom{k-i+r}{j-i}.
\end{align}

On the other hand, we have
\begin{align}\label{2.9}
\dbinom{k+r}{i}\dbinom{k-i+r}{j-i}
=&\dfrac{(k+r)!}{i!(k-i+r)!}\cdot\dfrac{(k-i+r)!}{(j-i)!(k+r-j)!}\nonumber\\
=&\dfrac{(k+r)!}{(k+r-j)!}\cdot\dfrac{1}{i!(j-i)!}\nonumber\\
=&\dbinom{k+r}{j}\cdot\dfrac{j!}{i!(j-i)!}\nonumber\\
=&\dbinom{k+r}{j}\dbinom{j}{i}.
\end{align}
Putting (\ref{2.9}) into (\ref{2.8}), we have
$$S_{k, r}(j)=\dbinom{k+r}{j}\Big(\sum\limits_{i=0}^j(-1)^i\dbinom{j}{i}\Big)
=\dbinom{k+r}{j}(1-1)^j=0$$
as expected. So Lemma \ref{lem 2.10} is proved in this case.

{\sc Case 2.} $k+1\le j\le k+r$. It then follows that
\begin{align*}
S_{k, r}(j)
=&\sum\limits_{i=0}^k(-1)^{k-i}\dbinom{k+r}{k-i}\dbinom{i+r}{j-(k-i)}\\
=&\sum\limits_{i=0}^k(-1)^i\dbinom{k+r}{i}\dbinom{k-i+r}{j-i}.
\end{align*}
Then applying (\ref{2.9}), we get the desire result. 
Hence Lemma \ref{lem 2.10} holds in this case. 

This completes the proof of Lemma \ref{lem 2.10}.
\end{proof}

%*******************************Lemma2.10*******************************
\begin{lem}\label{lem 2.11}{\rm(Lemma 2.9, \cite{[YH]})}
Let $x=\pi^mx_1\in \mathcal{O}_K$ with $m$ being a nonnegative integer.
Then each of the following is true.

{\rm (1).} If ${\rm ord}(x)\ge m$, then $x_1\in \mathcal{O}_K$.

{\rm (2).} If ${\rm ord}(x)=m$, then $x_1\in \mathcal{O}_K^{\times}.$
\end{lem}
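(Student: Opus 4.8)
The plan is to reduce both parts to the additivity of the valuation. Recall that ${\rm ord}$ extends to a group homomorphism $K^{\times}\to\mathbb{Z}$ with ${\rm ord}(\pi)=1$, that $\mathcal{O}_K=\{0\}\cup\{y\in K^{\times}\,:\,{\rm ord}(y)\ge 0\}$, and that $\mathcal{O}_K^{\times}=\{y\in K^{\times}\,:\,{\rm ord}(y)=0\}$. Since $x=\pi^m x_1$ lies in $\mathcal{O}_K$ with $m\ge 0$ a fixed integer, the element $x_1=x\pi^{-m}$ is a well-defined element of $K$ (with the convention $x_1=0$ when $x=0$), so the point of the lemma is to locate $x_1$ inside $\mathcal{O}_K$, respectively $\mathcal{O}_K^{\times}$.

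First I would dispose of the degenerate case $x=0$: then $x_1=0\in\mathcal{O}_K$, so (1) holds, and (2) is vacuous because ${\rm ord}(0)=\infty$, so the hypothesis ${\rm ord}(x)=m$ cannot occur. Assuming henceforth $x\ne 0$ (hence $x_1\ne 0$), I would compute
\[
{\rm ord}(x)={\rm ord}(\pi^m x_1)=m\,{\rm ord}(\pi)+{\rm ord}(x_1)=m+{\rm ord}(x_1),
\]
so that ${\rm ord}(x_1)={\rm ord}(x)-m$.

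Part (1) then follows: the hypothesis ${\rm ord}(x)\ge m$ yields ${\rm ord}(x_1)\ge 0$, whence $x_1\in\mathcal{O}_K$. Part (2) is equally immediate: ${\rm ord}(x)=m$ forces ${\rm ord}(x_1)=0$, so $x_1\in\mathcal{O}_K^{\times}$. There is no genuine obstacle in this lemma; the only points requiring a moment's care are the bookkeeping of the case $x=0$ above and the observation that $x_1$ must be treated as an a priori element of $K$ — it is precisely the conclusion of (1) that promotes it to $\mathcal{O}_K$.
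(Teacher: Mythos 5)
Your proof is correct: the identity ${\rm ord}(\pi^m x_1)=m+{\rm ord}(x_1)$ together with the characterizations $\mathcal{O}_K=\{0\}\cup\{y:{\rm ord}(y)\ge 0\}$ and $\mathcal{O}_K^{\times}=\{y:{\rm ord}(y)=0\}$ is exactly the intended argument, and your handling of $x=0$ is the only bookkeeping needed. The paper itself gives no proof, citing Lemma 2.9 of \cite{[YH]}, and your valuation-additivity argument is the same standard one used there, so there is nothing further to compare.
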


%*******************************Lemma2.12*******************************
\begin{lem}\label{lem 2.12}
Let $g(u, v)=\sum\limits_{i=i_0}^{\infty}\alpha_iu^i
+\sum\limits_{j=j_0}^{\infty}\beta_jv^j$ be a polynomial 
of form (\ref{eqno 1.2}). Then 
\begin{align*}
Z_g(s, \chi, D)={\left\{\begin{array}{rl}
M_1(q^{-s}),\ \ &{\rm ord}(\alpha_{i_0})<{\rm ord}(\beta_{j_0}),\\
\dfrac{M_2(q^{-s})}{1-q^{-1-s}},\ \ &{\it otherwise},
\end{array}\right.}
\end{align*}
where $D=\mathcal{O}_K^{\times}\times\mathcal{O}_K$ 
and $M_1(x)$ and $M_2(x)$ are polynomials with complex 
coefficients depending on $\chi$.
\end{lem}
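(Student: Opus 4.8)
The plan is to slice the domain $D=\mathcal{O}_K^{\times}\times\mathcal{O}_K$ by the valuation of the second variable and reduce every slice to Lemma \ref{lem 2.8}. Since $\mathcal{O}_K^{\times}\times\{0\}$ is a null set, up to measure zero $D=\bigsqcup_{m\ge 0}(\mathcal{O}_K^{\times}\times\pi^m\mathcal{O}_K^{\times})$; as $|g|\le 1$ on $D$, each slice contributes at most $(1-q^{-1})^2q^{-m}$ in absolute value when ${\rm Re}(s)>0$, so the series converges and $Z_g(s,\chi,D)=\sum_{m\ge 0}Z_g(s,\chi,\mathcal{O}_K^{\times}\times\pi^m\mathcal{O}_K^{\times})$. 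The $m=0$ term is $Z_g(s,\chi,(\mathcal{O}_K^{\times})^2)$, which Lemma \ref{lem 2.8} evaluates. For $m\ge 1$ I substitute $v=\pi^mv_1$ with $v_1\in\mathcal{O}_K^{\times}$ (legitimate by Lemma \ref{lem 2.11}, and $|dv|=q^{-m}|dv_1|$), turning the $m$-th term into $q^{-m}\int_{(\mathcal{O}_K^{\times})^2}\chi(ac\,g_m(u,v_1))\,|g_m(u,v_1)|^s\,|du\,dv_1|$, where $g_m(u,v_1):=g(u,\pi^mv_1)=\sum_i\alpha_iu^i+\sum_j\beta_j\pi^{mj}v_1^j$.

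The crucial observation is that on $(\mathcal{O}_K^{\times})^2$ the two ``blocks'' of $g_m$ have constant order: setting $e_\alpha:={\rm ord}(\alpha_{i_0})$ and $e_\beta:={\rm ord}(\beta_{j_0})$, the strict inequalities in (\ref{eqno 1.2}) force $\sum_i\alpha_iu^i$ to have order exactly $e_\alpha$ and $\sum_j\beta_j\pi^{mj}v_1^j$ to have order exactly $e_\beta+mj_0$, every other monomial having strictly larger order. I then distinguish three regimes for $m\ge 1$. If $e_\alpha<e_\beta+mj_0$, then $|g_m|\equiv q^{-e_\alpha}$, so the $m$-th term equals $c_m(\chi)\,q^{-m-e_\alpha s}$ with a constant $|c_m(\chi)|\le 1$. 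If $e_\alpha>e_\beta+mj_0$, then $|g_m|\equiv q^{-(e_\beta+mj_0)}$, so the $m$-th term is $c_m(\chi)\,q^{-m-(e_\beta+mj_0)s}$. If $e_\alpha=e_\beta+mj_0$, I factor $g_m=\pi^{e_\alpha}h_m$ and check, using precisely the strict inequalities ${\rm ord}(\alpha_i)>{\rm ord}(\alpha_{i_0})$ and ${\rm ord}(\beta_j)>{\rm ord}(\beta_{j_0})$ for $i>i_0$, $j>j_0$, that $h_m\in\mathcal{O}_K[u,v_1]$ is again of the form (\ref{eqno 1.2}), now with both leading coefficients $\pi^{-e_\alpha}\alpha_{i_0}$ and $\pi^{mj_0-e_\alpha}\beta_{j_0}$ in $\mathcal{O}_K^{\times}$; the equal-order case of Lemma \ref{lem 2.8} then gives $Z_{h_m}(s,\chi,(\mathcal{O}_K^{\times})^2)=G_0(q^{-s})/(1-q^{-1-s})$, so the $m$-th term is $q^{-m-e_\alpha s}G_0(q^{-s})/(1-q^{-1-s})$. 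The third regime occurs for at most one $m$ (when $(e_\alpha-e_\beta)/j_0$ is a positive integer), the second for only finitely many $m$, and all remaining $m\ge 1$ fall in the first regime.

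It then remains to sum the first-regime tail $\sum_m c_m(\chi)\,q^{-m-e_\alpha s}$. Writing $g_m=\big(\sum_i\alpha_iu^i\big)(1+\eta_m)$ with ${\rm ord}(\eta_m)=(e_\beta+mj_0)-e_\alpha\to\infty$, once this exceeds the conductor $c_\chi$ we get $\chi(ac\,g_m)=\chi\big(ac(\sum_i\alpha_iu^i)\big)$ independently of both $v_1$ and $m$, so $c_m(\chi)$ is eventually a fixed constant and the tail is a convergent geometric series in $q^{-1}$ times $q^{-e_\alpha s}$, i.e. a constant multiple of $q^{-e_\alpha s}$. Assembling the pieces: if $e_\alpha<e_\beta$, the second and third regimes are vacuous, the $m=0$ term is the monomial $c(\chi)q^{-e_\alpha s}$ from Lemma \ref{lem 2.8}, and every surviving contribution is a polynomial in $q^{-s}$, so $Z_g(s,\chi,D)=M_1(q^{-s})$; if $e_\alpha\ge e_\beta$, the only source of a denominator is the factor $1/(1-q^{-1-s})$ produced by Lemma \ref{lem 2.8} (the $m=0$ term when $e_\alpha=e_\beta$, and/or the third regime), so collecting everything over this common denominator gives $M_2(q^{-s})/(1-q^{-1-s})$ — a plain polynomial being the admissible special case $M_2=(1-q^{-1-s})\cdot(\text{polynomial})$. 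I expect the only delicate step to be the third regime: one must verify that the rescaled polynomial $h_m$ genuinely inherits \emph{all} the hypotheses of (\ref{eqno 1.2}), so that Lemma \ref{lem 2.8} may be invoked on it.
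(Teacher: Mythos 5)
Your proof is correct, but it follows a genuinely different route from the paper's. The paper treats the case $\mathrm{ord}(\alpha_{i_0})\ge\mathrm{ord}(\beta_{j_0})$ by factoring out $\pi^{\mathrm{ord}(\beta_{j_0})}$ and running Igusa's stationary phase formula (Lemma \ref{lem 2.2}) directly on $D=\mathcal{O}_K^{\times}\times\mathcal{O}_K$: it analyzes the singular locus of the reduction (subcases $j_0=1$; $j_0>1$ with equal orders; $j_0>1$ with $e_1>e_2$), and in the last subcase iterates the substitution $v\mapsto\pi v$ exactly $d=\lfloor(e_1-e_2)/j_0\rfloor$ times until the first case applies --- a finite recursion with no infinite sums, the denominator $1-q^{-1-s}$ coming from the $\sigma$-term of the SPF at each stage. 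You instead slice $D$ by $\mathrm{ord}(v)=m$ (discarding the null set $v=0$), reduce each slice to $(\mathcal{O}_K^{\times})^2$ by $v=\pi^m v_1$, verify that each $g_m$ is again of the form (\ref{eqno 1.2}), and invoke Lemma \ref{lem 2.8} slice by slice, summing the resulting series; this is essentially the technique the paper reserves for the cone contributions in the proof of Theorem \ref{thm 1.1}, transplanted to Lemma \ref{lem 2.12}. Your approach buys uniformity (no case analysis on $j_0$ or on the mod-$\pi$ singular locus, no bookkeeping of the recursion depth $d$) and yields the sharper conclusion that $M_1$ is a single monomial $Cq^{-e_1s}$; the paper's approach buys the absence of any convergence issue, since everything stays a finite sum. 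Your key verifications are sound: the strict order hypotheses in (\ref{eqno 1.2}) do give the two blocks of $g_m$ exact orders $e_\alpha$ and $e_\beta+mj_0$ on $(\mathcal{O}_K^{\times})^2$, the third regime occurs for at most one $m$ and the rescaled $h_m$ does inherit all hypotheses of (\ref{eqno 1.2}) with both leading coefficients of order zero, and the tail is handled correctly --- though the conductor argument is more than you need, since $|c_m(\chi)|_{\infty}\le 1$ already makes $\sum_m c_m(\chi)q^{-m}$ an absolutely convergent, $s$-independent constant, which is all the conclusion requires.
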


\begin{proof} Let $e_1={\rm ord}(\alpha_{i_0})$
and $e_2={\rm ord}(\beta_{j_0})$. We divide the proof into two cases.

{\sc Case 1.} $e_1<e_2$.
Because for all $i>i_0$, we have ${\rm ord}(\alpha_i)>0$ and
${\rm ord}(\beta_j)\ge e_2>e_1\ge 0$ for all $j\ge j_0$.
Hence for all $(u, v)\in \mathcal{O}_K^{\times}\times\mathcal{O}_K$,
one has
\begin{align*}
|g(u, v)|=&|\alpha_{i_0}u^{i_0}+\sum_{i=i_0+1}^{\infty}\alpha_iu^i
+\sum\limits_{j=j_0}^{\infty}\beta_jv^j|\\
=&q^{-e_1}|\pi^{-e_1}\alpha_{i_0}u^{i_0}+\pi^{-e_1}
\big(\sum_{i=i_0+1}^{\infty}\alpha_iu^i
+\sum\limits_{j=j_0}^{\infty}\beta_jv^j\big)|\\
=&q^{-e_1}.
\end{align*}
Therefore
$$Z_g(s, \chi, D)=q^{-e_1s}\int_D\chi(acg(u, v))|dudv|
:=M_1(q^{-s})$$
as expected. So Lemma \ref{lem 2.12} is proved in this case.

{\sc Case 2.} $e_1\ge e_2$. Then
$$Z_g(s, \chi, D)=q^{-e_2s}Z_{g_1}(s, \chi, D),$$
where $g_1(u, v)=\sum\limits_{i=i_0}^{\infty}\alpha_i^{'}u^i
+\sum\limits_{j=j_0}^{\infty}\beta_j^{'}v^j$ with
$\alpha_i^{'}=\pi^{-e_2}\alpha_i$ and $\beta_j^{'}=\pi^{-e_2}\beta_j$.
Hence it follows that
$\bar{g}_1(u, v)=\bar{\alpha_{i_0}^{'}}u^{i_0}+\bar{\beta_{j_0}^{'}}v^{j_0}$
with $\bar{\beta_{j_0}^{'}}\ne \bar{0}$ and
$\bar{D}=\mathbb{F}_q^{\times}\times \mathbb{F}_q$.

{\sc Subcase 2.1.} $j_0=1$. Then
$$\dfrac{\partial \bar{g}_1}{\partial v}(P)
=\bar{\beta}_{j_0}\ne\bar{0}$$
which shows that 
${\rm Sing}_{\bar{g}_1}(\mathbb{F}_q)\bigcap \bar{D}=\emptyset$,
which implies that $S(g_1, D)=\emptyset$. By Lemma \ref{lem 2.2}, 
one has
$$Z_g(s, \chi, D)=q^{-e_2s}Z_{g_1}(s, \chi, D)
=\dfrac{M_{2, 1}(q^{-s})}{1-q^{-1-s}}.$$
where $M_{2, 1}(x)$ is a polynomial with complex 
coefficients depending on $\chi$.
Hence Lemma \ref{lem 2.12} is true in this case.

{\sc Subcase 2.2.} $j_0>1$ and $e_1=e_2$, then $\bar{\alpha_{i_0}^{'}}\ne\bar{0}$.
Suppose $P=(u, v)\in {\rm Sing}_{\bar{g}_1}(\mathbb{F}_q)$, then
$$\bar{g}_1(P)=\dfrac{\partial \bar{g}_1}{\partial u}(P)
=\dfrac{\partial \bar{g}_1}{\partial v}(P)=\bar{0}.$$

If $i_0=1$, then
$\dfrac{\partial \bar{g}_1}{\partial u}(P)=\bar{\alpha_{i_0}^{'}}\ne\bar{0}$.
If $i_0>1$ and $p\nmid i_0$, then
$\dfrac{\partial \bar{g}_1}{\partial u}(P)=i_0\bar{\alpha_{i_0}^{'}}u^{i_0-1}=\bar{0}$
gives us $u=\bar{0}$.

If $p\mid i_0$, then $p\nmid j_0$ since $\gcd(i_0, j_0)=1$. Thus
$\dfrac{\partial \bar{g}_1}{\partial v}(P)=j_0\bar{\beta_{j_0}^{'}}v^{j_0-1}=\bar{0}$
tells us $v=\bar{0}$. But $\bar{g}_1(P)=\bar{0}$. Then one has $u=\bar{0}$.

But $\bar{D}=\mathbb{F}_q^{\times}\times \mathbb{F}_q$.
Hence in this case, we always have
${\rm Sing}_{\bar{g}_1}(\mathbb{F}_q)\bigcap \bar{D}=\emptyset$,
which implies that $S(g_1, D)=\emptyset$. Then Lemma \ref{lem 2.2} gives us that
$$Z_g(s, \chi, D)=q^{-e_2s}Z_{g_1}(s, \chi, D)=\dfrac{M_{2,2}(q^{-s})}{1-q^{-1-s}},$$
where $M_{2, 2}(x)$ is a polynomial with complex coefficients depending on $\chi$.
This finishes the proof of Lemma \ref{lem 2.12} in this case.

{\sc Subcase 2.3.} $j_0>1$ and $e_1>e_2$, then
$\bar{g}_1(u, v)=\bar{\beta_{j_0}^{'}}v^{j_0}$. Since $j_0>1$,
so it is easy to deduce that
$${\rm Sing}_{\bar{g}_1}(\mathbb{F}_q)\bigcap \bar{D}_1
=\{(u, v)\in \mathbb{F}_q^2|u\in \mathbb{F}_q^{\times}, v=\bar{0} \}.$$
Therefore $S(g_1, D)=\{(u, v)\in R^2|v=0, u\ne 0\}$ and
$D_{S(g_1, D)}=\mathcal{O}_K^{\times}\times\pi\mathcal{O}_K$.
By Lemma \ref{lem 2.2}, we arrive at
\begin{align}\label{2.10}
Z_g(s, \chi, D)=&q^{-e_2s}Z_{g_1}(s, \chi, D)\nonumber\\
=&q^{-e_2s}\Big(v(\bar{g}_1, D, \chi)
+\sigma(\bar{g}_1, D, \chi)\dfrac{(1-q^{-1})q^{-s}}{1-q^{-1-s}}
+Z_{g_1}(s, D_{S(g_1, D)})\Big)\nonumber\\
=&\dfrac{M_{2, 3}(q^{-s})}{1-q^{-1-s}}+q^{-e_2s}Z_{g_1}(s, \chi, D_{S(g_1, D)}),
\end{align}
where $M_{2, 3}(x)$ is a polynomial with complex coefficients depending on $\chi$.

For $Z_{g_1}(s, \chi, D_{S(g, D)})$, we make the change of variables
of the form: $(u, v)\mapsto (u_1, \pi v_1)$, then it follows that
\begin{align}\label{2.11}
Z_{g_1}(s, \chi, D_{S(g_1, D)})=q^{-1}Z_{g_2}(s, \chi, D),
\end{align}
where $g_2(u, v):=\sum\limits_{i=i_0}^{\infty}\alpha_i^{'}u^i
+\sum\limits_{j=j_0}^{\infty}\beta_j^{'}\pi^jv^j$.
Then putting (\ref{2.11}) into (\ref{2.10}) yields that
\begin{equation*}
Z_g(s, \chi, D)=\dfrac{M_{2, 3}(q^{-s})}{1-q^{-1-s}}
+q^{-1-e_2s}Z_{g_2}(s, \chi, D).
\end{equation*}

Let $e_1-e_2=dj_0+e_3$ with $d$ and $e_3$ being positive integers and
$0\le e_3<j_0$. By Lemma \ref{lem 2.2} applied to the polynomial
$g_2$ for $d$ times, the above argument finally yields that
\begin{equation}\label{eqno 2.12}
Z_g(s, \chi, D)=\dfrac{M_{2, 4}(q^{-s})}{1-q^{-1-s}}
+q^{-d-1-dj_0s-e_2s}Z_{g_3}(s, \chi, D),
\end{equation}
where $g_3(u, v):=\sum\limits_{i=i_0}^{\infty}\alpha_i^{'}\pi^{-dj_0}u^i
+\sum\limits_{j=j_0}^{\infty}\beta_j^{'}\pi^{(d+1)j-dj_0}v^j$
and $M_{2, 4}(x)$ is a polynomial with complex coefficients depending on $\chi$.

Since ${\rm ord}(\alpha_{i_0}^{'}\pi^{-dj_0})=e_3$,
and ${\rm ord}(\beta_{j_0}^{'}\pi^{(d+1)j_0-dj_0})=j_0$,
so the result for {\sc Case 1} tells us that
\begin{equation}\label{eqno 2.13}
Z_{g_3}(s, \chi, D)=M_{2, 5}(q^{-s}),
\end{equation}
where $M_{2, 5}(x)$ is a polynomial with complex coefficients depending on $\chi$.

Now from (\ref{eqno 2.12}) and (\ref{eqno 2.13}), we derive that
\begin{equation*}
Z_g(s, \chi, D)=\dfrac{M_2(q^{-s})}{1-q^{-1-s}}
\end{equation*}
as expected. So \ref{lem 2.12} is true for this Case.

This concludes the proof of Lemma \ref{lem 2.12} for this Case.
\end{proof}

Finally, using Theorem \ref{thm 1.1}, we give the following lemma 
as the conclusion of this section, which will be used in the proof 
of Theorem \ref{thm 1.2}.

%*******************************Lemma2.13*******************************
\begin{lem}\label{lem 2.13}
Let $D=(\mathcal{O}_K^{\times})^2$ and
$$h(u, v)=u^p+v^r\mathbb{H}_r^k(v, t-v)
=u^p+v^r\sum\limits_{i=0}^k\binom{k+r}{i+r}v^i(t-v)^{k-i},$$ 
where $r$ and $k$ are positive integers such that $p\nmid r(k+r)$, 
$p\nmid \binom{k+r}{r}$ and $t^{1/p}\in \mathcal{O}_K^{\times}$.
Then
\begin{align*}
Z_h(s, \chi, D)=\dfrac{L(q^{-s})}{(1-q^{-1-s})(1-q^{-p-(k+1)-p(k+1)s})},
\end{align*}
where $L(x)$ is a polynomial with complex coefficients depending on $\chi$.
\end{lem}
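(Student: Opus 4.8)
The plan is to evaluate $Z_h(s, \chi, D)$ over $D = (\mathcal{O}_K^{\times})^2$ directly, using the structure of $h(u,v) = u^p + v^r\mathbb{H}_r^k(v, t-v)$ together with the fact that $v^r\mathbb{H}_r^k(v, t-v)$ is homogeneous of degree $k+r$ in $(v, t-v)$, and that Lemma \ref{lem 2.1} gives $\frac{\partial}{\partial v}(v^r\mathbb{H}_r^k(v, t-v)) = r\binom{k+r}{r}v^{r-1}(t-v)^k$. First I would compute the reduction $\bar h$ and its singular locus on $\mathbb{F}_q^{\times} \times \mathbb{F}_q^{\times}$. Since $p = \mathrm{char}(K)$, we have $\bar h(u,v) = u^p + \overline{v^r\mathbb{H}_r^k(v,t-v)}$, and $\frac{\partial \bar h}{\partial u} = 0$ identically, so the singular points on $D$ are exactly those $(u,v)$ with $u$ a $p$-th root of $-v^r\mathbb{H}_r^k(v,t-v)$ and $v^{r-1}(t-v)^k = 0$ — forcing $\bar v = \bar t$ (recall $p \nmid r\binom{k+r}{r}$, and $\bar v \ne 0$ on $D$). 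So the singular locus on $\bar D$ lies over the single line $\bar v = \bar t$.

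Next I would apply the stationary phase formula (Lemma \ref{lem 2.2}) to $Z_h(s,\chi,D)$. The $v$ and $\sigma$ terms contribute a rational function with only the pole factor $1 - q^{-1-s}$. The remaining term $Z_h(s,\chi,D_{S(h,D)})$ is an integral over the set where $u \in \mathcal{O}_K^{\times}$, $v \equiv t \pmod{\pi}$ (i.e. $v \in \bar t + \pi\mathcal{O}_K$ lifted appropriately — more precisely $v$ runs over units congruent to $t^{1/p}$-related residues; I'd set this up carefully using that $t^{1/p} \in \mathcal{O}_K^{\times}$). On this piece I would make the substitution $v = t - \pi w$ (or $v = t + \pi w$), so that $t - v = \pi w$ with $w \in \mathcal{O}_K$, while $v$ remains a unit. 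Writing $v^r\mathbb{H}_r^k(v, t-v)$ via the identity \eqref{eqno 2.1} as $\sum_{i=0}^k c_i t^{?} v^i (t-v)^{k-i}$ — or better, keeping it in the form $v^r\mathbb{H}_r^k(v, \pi w)$ — and using homogeneity, one sees the leading behavior: the $i = k$ term in $\mathbb{H}_r^k$ contributes $\binom{k+r}{k+r} v^{k+r} = v^{k+r}$ (a unit), no wait — I need to be careful: $\mathbb{H}_r^k(v, 0) = \binom{k+r}{r} v^k$, so $v^r \mathbb{H}_r^k(v, t-v)\big|_{t-v=0}$... actually at $t - v = \pi w$, expanding gives $v^r\mathbb{H}_r^k(v,\pi w) = \binom{k+r}{r}v^{k+r} + (\text{higher order in } \pi w)$. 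So $h(u,v) = u^p + \binom{k+r}{r}v^{k+r} + \pi(\cdots)$. Hmm, but $u^p + \binom{k+r}{r}v^{k+r}$ with both variables units — here $\bar h$ need not vanish. The real simplification is that after the substitution the integrand near the singular fiber becomes, up to a unit times a power of $\pi$ coming from the measure, a zeta function of a polynomial of the form treated in Theorem \ref{thm 1.1}.

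The key reduction is therefore: after the change of variables and peeling off the $\pi$-powers, $Z_h(s,\chi,D_{S(h,D)})$ should reduce to $q^{-1}$ times a zeta function $Z_{\tilde h}(s,\chi,D)$ where $\tilde h(u,w)$ has the shape $\alpha u^p + \beta w^{k+1} \cdot(\text{unit}) + (\text{higher order})$, i.e. a polynomial $g(u,w) = \sum \alpha_i u^i + \sum \beta_j w^j$ of the form \eqref{eqno 1.2} with $i_0 = p$ and $j_0 = k+1$, noting $\gcd(p, k+1) = 1$ because $p \nmid \binom{k+r}{r}$ forces $p \nmid (k+1)$ when $r=1$ — and in general one checks $p \nmid (k+1)$ follows from the hypotheses (this is the point flagged in Remark 1.5(2)). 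Applying Theorem \ref{thm 1.1} with these values gives the denominator $(1-q^{-1-s})(1 - q^{-p-(k+1)-p(k+1)s})$, matching the claim. Combining with the stationary-phase leading terms yields $Z_h(s,\chi,D) = \frac{L(q^{-s})}{(1-q^{-1-s})(1-q^{-p-(k+1)-p(k+1)s})}$.

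The main obstacle I anticipate is verifying that, after the substitution $v = t - \pi w$, the resulting polynomial in $(u, w)$ genuinely has the separated-variables shape required by \eqref{eqno 1.2} — in particular that the cross terms $u^a w^b$ with $a, b \ge 1$ all carry strictly higher $\pi$-valuation than the pure leading terms, and that the coefficient of $w^{k+1}$ (after dividing out the correct power of $\pi$) is a unit rather than accidentally vanishing mod $\pi$. This requires a careful $\pi$-adic bookkeeping of the expansion $v^r\mathbb{H}_r^k(v, \pi w) = \sum_{j} (\text{something}) \pi^j w^j$ using identity \eqref{eqno 2.1} and Lemma \ref{lem 2.10}, tracking exactly which binomial coefficients survive mod $p$; the hypothesis $p \nmid \binom{k+r}{r}$ and $p \nmid r(k+r)$ are precisely what keep the relevant coefficients from degenerating. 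I would also need to confirm that $\mathrm{Sing}_{\tilde h}(\mathcal{O}_K) \cap D = \emptyset$ throughout so that the stationary phase / $L(f,P)$-machinery (Lemmas \ref{lem 2.7}, \ref{lem 2.8}) applies at each stage — but this follows as in the proof of Lemma \ref{lem 2.8} since $p \nmid (k+1)$ gives a nonvanishing partial derivative in $w$.
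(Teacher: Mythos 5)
Your overall strategy is the paper's: apply the stationary phase formula on $D$, observe that $p\nmid r\binom{k+r}{r}$ forces the singular locus of $\bar h$ on $\bar D$ to sit over $\bar v=\bar t$, and then reduce the contribution of the singular residue class to a two-variable polynomial of the form (\ref{eqno 1.2}) with $i_0=p$, $j_0=k+1$, so that Theorem \ref{thm 1.1} supplies the factor $1-q^{-p-(k+1)-p(k+1)s}$. However, there is a genuine gap in how you handle the singular residue class. The singular set ${\rm Sing}_{\bar h}(\mathbb{F}_q)\cap\bar D$ is the \emph{single point} $(-\bar t^{(k+r)/p},\bar t)$ (since $u^p+\bar t^{k+r}=0$ has the unique root $u=-\bar t^{(k+r)/p}$), so $D_{S(h,D)}=(-t^{(k+r)/p}+\pi\mathcal{O}_K)\times(t+\pi\mathcal{O}_K)$, and the change of variables must recenter \emph{both} coordinates: $(u,v)\mapsto(-t^{(k+r)/p}+\pi u_1,\,t+\pi v_1)$, giving the Jacobian factor $q^{-2}$. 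You instead describe the remaining integral as being over $u\in\mathcal{O}_K^\times$ with only $v\equiv t\pmod\pi$, substitute only $v=t-\pi w$, and expect a factor $q^{-1}$. With that substitution alone the polynomial becomes $u^p+t^{k+r}+\sum_{j\ge1}c_j\pi^jw^j$, which has a unit constant term and is \emph{not} of the form (\ref{eqno 1.2}), so Theorem \ref{thm 1.1} does not apply; the crucial point you miss is that in characteristic $p$ one has $(-t^{(k+r)/p}+\pi u_1)^p=-t^{k+r}+\pi^pu_1^p$, and it is exactly this Frobenius cancellation of the constant $t^{k+r}$ that produces the leading term $\pi^pu_1^p$, i.e.\ $i_0=p$.

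The second issue is that what you defer as "the main obstacle" is in fact the core of the proof and must be carried out, not anticipated. One has to show that in the expansion $\sum_{i=0}^k\binom{k+r}{i+r}(t+\pi v_1)^{r+i}(-\pi v_1)^{k-i}=t^{k+r}+\sum_{j=1}^{k+r}S_{k,r}(j)t^{k+r-j}\pi^jv_1^j$ the coefficients $S_{k,r}(j)$ vanish identically (as integers, not merely mod $p$) for $1\le j\le k$ — this is Lemma \ref{lem 2.10} — while $S_{k,r}(k+1)=(-1)^k\binom{k+r}{k+1}$ with $\binom{k+r}{k+1}=\frac{r}{k+1}\binom{k+r}{r}$, so that $p\nmid r\binom{k+r}{r}$ yields both $p\nmid(k+1)$ (hence $\gcd(p,k+1)=1$, which you only assert "one checks") and that the coefficient of $v_1^{k+1}$ is $\pi^{k+1}$ times a unit. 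Only then is the recentred polynomial $\pi^pu_1^p+\sum_{j=k+1}^{k+r}S_{k,r}(j)t^{k+r-j}\pi^jv_1^j$ of the form (\ref{eqno 1.2}) with $i_0=p$, $j_0=k+1$, and Theorem \ref{thm 1.1} gives the stated denominator. As written, your argument neither produces a polynomial to which Theorem \ref{thm 1.1} applies nor verifies the coefficient facts on which the exponent $k+1$ and the coprimality $\gcd(p,k+1)=1$ rest.
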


\begin{proof}
First of all, we have
$$\bar{h}(u, v)=u^p+v^r\sum\limits_{i=0}^{k}
{\overline{\dbinom{k+r}{i+r}}v^i(\bar{t}-v)^{k-i}},$$
and $\bar{D}=(\mathbb{F}_q^{\times})^2$.
Then using Lemma \ref{lem 2.1}, we obtain that
\begin{equation*}
\dfrac{\partial \bar{h}}{\partial v}(u, v)=\dfrac{\partial}{\partial v}
(v^r\mathbb{H}_r^k(v, t-v))=\bar{r}\overline{\dbinom{k+r}{r}}v^{r-1}(\bar{t}-v)^k.
\end{equation*}
Notice that for any $n\in \mathbb{Z}$
with $p\nmid n$, there exists an integer $n^{'}$ such that $nn^{'}\equiv 1\pmod{p}$.
So $nn^{'}=1$ since ${\rm char}(K)=p$, which implies that $n\in\mathcal{O}_K^{\times}$.
Then it follows that $\bar{r}\overline{\binom{k+r}{r}}\ne \bar{0}$
since $p\nmid r\binom{k+r}{r}$.

If $P=(u, v)\in {\rm Sing}_{\bar{h}}(\mathbb{F}_q)$, then
$\frac{\partial \bar{h}}{\partial v}(P)$=0 tells us $v=\bar{t}$
since $v\in \mathbb{F}_q^{\times}$. Hence $\bar{h}(P)=0$ gives us that
$$u^p+\bar{t}^{k+r}=0.$$
Because $t^{(k+r)/p}\in \mathcal{O}_K^{\times}$, so we have $u=-\bar{t}^{(k+r)/p}$.
Then it follows that ${\rm Sing}_{\bar{h}}(\mathbb{F}_q)\cap \bar{D}=(-\bar{t}^{(k+r)/p}, \bar{t})$,
which implies that $S(h, D)=(-t^{(k+r)/p}, t)$ and
$$D_{S(h, D)}=(-t^{(k+r)/p}+\pi\mathcal{O}_K)\times(t+\pi\mathcal{O}_K).$$
Therefore by Lemma \ref{lem 2.2}, we obtain that
\begin{align}\label{2.14}
Z_h(s, \chi, D)=&v(\bar{h}, D, \chi)+\sigma(\bar{h}, D, \chi)
\dfrac{(1-q^{-1})q^{-s}}{1-q^{-1-s}}+Z_h(s, \chi, D_{S(h, D)})\nonumber\\
:=&\dfrac{H_1(q^{-s})}{1-q^{-1-s}}+Z_h(s, \chi, D_{S(h, D)})
\end{align}
where $H_1(x)$ is a polynomial with complex coefficients depending on $\chi$.

For $Z_h(s, \chi, D_{S(h, D)})$, we make the change of variables of form:
$(u, v)\mapsto (-t^{(k+r)/p}+\pi u_1, t+\pi v_1)$, and arrive at
$$Z_h(s, \chi, D_{S(h, D)})=q^{-2}Z_{h_1}(s, \chi),$$
where
\begin{align*}
h_1(u, v)=&(-t^{(k+r)/p}+\pi u)^p+\sum\limits_{i=0}^k\dbinom{k+r}{i+r}
(t+\pi v)^{r+i}(-\pi v)^{k-i}\\
=&\pi^pu^p+(-t^{(k+r)/p})^p+t^{k+r}+\sum_{j=1}^{k+r}S_{k, r}(j)t^{k+r-j}\pi^jv^j\\
=&\pi^pu^p+\sum_{j=1}^{k+r}S_{k, r}(j)t^{k+r-j}\pi^jv^j,
\end{align*}
where $S_{k, r}(j)$ are integers defined as in Lemma \ref{lem 2.10}.
Therefore, Lemma \ref{lem 2.10} yields that
\begin{equation*}
h_1(u, v)=\pi^pu^p+\sum_{j=k+1}^{k+r}S_{k, r}(j)t^{k+r-j}\pi^jv^j.
\end{equation*}
Furthermore, we derive that
\begin{align*}
S_{k, r}(k+1)=&\dbinom{k+r}{k+1}\Big(\sum\limits_{i=0}^k(-1)^i\dbinom{k+1}{i}\Big)\\
=&\dbinom{k+r}{k+1}\Big(\sum\limits_{i=0}^{k+1}(-1)^i\dbinom{k+1}{i}-(-1)^{k+1}\Big)\\
=&(-1)^k\dbinom{k+r}{k+1}.
\end{align*}
Oh the other hand, one has
\begin{align*}
\dbinom{k+r}{k+1}=\dbinom{k+r}{r-1}=\dfrac{r}{k+1}\dbinom{k+r}{r}.
\end{align*}
Since $p\nmid r\binom{k+r}{r}$, we have $p\nmid (k+1)\binom{k+r}{k+1}$,
which implies that $\gcd(p, k+1)=1$ and ${\rm ord}((-1)^k\binom{k+r}{k+1}t^{r-1})=1$.
Hence $h_1$ is a polynomial of form (\ref{eqno 1.2}). By Theorem \ref{thm 1.1},
we obtain that
\begin{align}\label{2.15}
Z_h(s, \chi, D_{S(h, D)})=q^{-2}Z_{h_1}(s, \chi)
=\dfrac{H_2(q^{-s})}{(1-q^{-1-s})(1-q^{-p-(k+1)-p(k+1)s})},
\end{align}
where $H_2(x)$ is a polynomial with complex coefficients depending on $\chi$.
From (\ref{2.15}) and (\ref{2.14}), we get that
$$Z_h(s, \chi, D)=\dfrac{L(q^{-s})}{(1-q^{-1-s})(1-q^{-p-(k+1)-p(k+1)s})},$$
where $L(x)$ is a polynomial with complex coefficients depending on $\chi$.

Thus the proof of Lemma \ref{lem 2.13} is complete.
\end{proof}

\section{\bf Proof of Theorem \ref{thm 1.1}}

In this section, we present the proof of Theorem \ref{thm 1.1}.
First of all, for any positive integer $n$ and given nonnegative
integers $t_1$, $\cdots$, $t_n$, we introduce the transform
$T_{t_1, \cdots, t_n}$ of variables defined as follows:
\begin{equation}\label{eqno 3.1}
T_{t_1, \cdots, t_n}(x_1, \cdots, x_n):=(\pi^{t_1}y_1, \cdots, \pi^{t_n}y_n).
\end{equation}
Let $J(T_{t_1, \cdots, t_n})$ be the Jacobian determinant associated to
$T_{t_1, \cdots, t_n}$, then by (\ref{eqno 3.1}),
we deduce that $J(T_{t_1, \cdots, t_n})=\pi^{t_1+\cdots+t_n}$.
It follows that for any $f(x)\in \mathcal{O}_K[x]$ and $D\subseteq \mathcal{O}_K^n$,
we have
\begin{align}\label{3.2}
&Z_f(s, \chi, D)\nonumber\\
=&\int_{D}\chi(acf(x))|f(x)|^s|dx|\nonumber\\
=&\int_{D_1}\chi(acf(\pi^{t_1}y_1, \cdots, \pi^{t_n}y_n))
|f(\pi^{t_1}y_1, \cdots, \pi^{t_n}y_n)|^s
|J(T_{t_1, \cdots, t_n})||dy_1\cdots dy_n|\nonumber\\
=&q^{-(t_1+\cdots+t_n)}\int_{D_1}\chi(acf(\pi^{t_1}y_1, \cdots, \pi^{t_n}y_n))
|f(\pi^{t_1}y_1, \cdots, \pi^{t_n}y_n)|^s|dy|,
\end{align}
where $D_1$ is the domain of $(y_1, \cdots, y_n)$.
The transform (\ref{eqno 3.1}) will be repeatedly used in the proofs of
Theorems \ref{thm 1.1} and \ref{thm 1.2}.

Moreover, let $C$ be any set with $C\subseteq\mathbb{N}^n$,
we define a set associated to $C$:
$$S(C):=\{(x_1,\cdots, x_n)\in \mathcal{O}_K^{n}|
\big({\rm ord}(x_1),\cdots, {\rm ord}(x_n)\big)\in C\}.$$
Then by (\ref{eqno 2.4}), one has
\begin{equation}\label{eqno 3.3}
Z_f(s, \chi)=Z_f\big(s, \chi, (\mathcal{O}_K^{\times})^n\big)
+\sum_{\gamma}Z_f\big(s, \chi, S(\Delta_{\gamma}\bigcap (\mathbb{N}^n\setminus \{0\}))\big),
\end{equation}
where $\gamma$ runs over all the proper faces of $\Gamma(f)$.

Now we can give the proof of Theorem \ref{thm 1.1}.\\

{\it Proof of Theorem \ref{thm 1.1}.}
Let $g(u, v)=\sum\limits_{i=i_0}^{\infty}\alpha_iu^i
+\sum\limits_{j=j_0}^{\infty}\beta_jv^j$ be a polynomial of form (\ref{eqno 1.2}),
$e_1={\rm ord}(\alpha_{i_0})$ and $e_2={\rm ord}(\beta_{j_0})$.
By (\ref{eqno 3.3}), we have
\begin{align}\label{3.4}
Z_g(s, \chi)=Z_g\big(s, \chi, (\mathcal{O}_K^{\times})^2)
+\sum_{i=1}^{5}Z_g\Big(s, \chi, S(\Delta_{\gamma_i}\bigcap(\mathbb{N}^2\setminus \{0\}))\Big),
\end{align}
where $\gamma_i$ is defined in the proof of Lemma \ref{lem 2.4}.
To prove Theorem \ref{thm 1.1}, we need to deal with the six integrals
on the right hand side of (\ref{3.4}) respectively.

For $Z_g(s, \chi, (\mathcal{O}_K^{\times})^2)$, Lemma \ref{lem 2.8} gives us that
\begin{equation}\label{eqno 3.5}
Z_g(s, \chi, (\mathcal{O}_K^{\times})^2)=\dfrac{G_0(q^{-s})}{1-q^{-1-s}},
\end{equation}
where $G_0(x)$ is a polynomial with complex coefficients depending on $\chi$.

For $Z_g\big(s, \chi, S(\Delta_{\gamma_1}\bigcap(\mathbb{N}^2\setminus \{0\})\big)$,
Lemma \ref{lem 2.5} tells us that
$\Delta_{\gamma_1}\bigcap(\mathbb{N}^2\setminus \{0\})=\{(0, a)|a\in \mathbb{Z}^{+}\}$.
Thus it is easy to see that
\begin{equation*}
S(\Delta_{\gamma_1}\bigcap(\mathbb{N}^2\setminus \{0\}))
=\bigcup_{a=1}^{\infty}(\mathcal{O}_K^{\times}\times\pi^a\mathcal{O}_K^{\times}).
\end{equation*}
Then we deduce that
$$Z_g\big(s, \chi, S(\Delta_{\gamma_1}\bigcap(\mathbb{N}^2\setminus \{0\}))\big)
=\sum_{a=1}^{\infty}Z_g(s, \chi, D_1(a)),$$
where $D_1(a)=\mathcal{O}_K^{\times}\times\pi^a\mathcal{O}_K^{\times}$.
For $Z_g(s, \chi, D_1(a))$, we set $t_1=0$ and $t_2=a$ in (\ref{eqno 3.1}).
Then by (\ref{3.2}) and Lemma \ref{lem 2.11}, one derives that
\begin{align}\label{3.6}
Z_g\big(s, \chi, S(\Delta_{\gamma_1}\bigcap(\mathbb{N}^2\setminus \{0\}))\big)
=\sum_{a=1}^{\infty}q^{-a}Z_{g_{1, a}}(s, (\mathcal{O}_K^{\times})^2),
\end{align}
where $g_{1, a}(u, v)=\sum\limits_{i=i_0}^{\infty}\alpha_iu^i
+\sum\limits_{j=j_0}^{\infty}\beta_j\pi^{aj}v^j$.
Notice that for all $j>j_0$, one has
\begin{align*}
{\rm ord}(\beta_j\pi^{aj})=&{\rm ord}(\beta_j)+{\rm ord}(\pi^{aj})
>{\rm ord}(\beta_{j_0})+{\rm ord}(\pi^{aj_0})
={\rm ord}(\beta_{j_0}\pi^{aj_0}),
\end{align*}
and ${\rm ord}(\alpha_i)>{\rm ord}(\alpha_{i_0})$ for all
$i>i_0$. Hence $g_{1, a}$ are polynomials of form (\ref{eqno 1.2}),
then Lemma \ref{lem 2.8} gives us that
$$Z_{g_{1, a}}(s, \chi, (\mathcal{O}_K^{\times})^2)=
Z_{\tilde{g}_{1, a}}(s, \chi, (\mathcal{O}_K^{\times})^2),$$
where $\tilde{g}_{1, a}(u, v)=\alpha_{i_0}u^{i_0}+\beta_{j_0}\pi^{aj_0}v^{j_0}$.

Let $d_1=\lfloor\frac{e_1-e_2}{j_0}\rfloor$ if $e_1\ge e_2+j_0$, or $d_1=0$.
When $1\le a\le d_1$(if $a$ exists), by Lemma \ref{lem 2.8}, one has
\begin{equation}\label{eqno 3.7}
Z_{\tilde{g}_{1, a}}(s, \chi, (\mathcal{O}_K^{\times})^2)=
\dfrac{G_{1, a}(q^{-s})}{1-q^{-1-s}},
\end{equation}
where $G_{1, a}(x)$ is a polynomial with complex coefficients 
depending on $\chi$ and $a$.

When $a\ge d_1+1$, one deduces that
$${\rm ord}(\beta_{j_0}\pi^{aj_0})=e_2+aj_0>e_1={\rm ord}(\alpha_{i_0}).$$
Hence by Lemma \ref{lem 2.8}, it follows that
$$Z_{\tilde{g}_{1, a}}(s, \chi, (\mathcal{O}_K^{\times})^2)=c_1(\chi, a)q^{-e_1s},$$
where $c_1(\chi, a)$ is a constant depend on $\chi$ and $a$
with $|c_1(\chi, a)|_{\infty}\le 1$. Since the series
$\sum\limits_{a=d_1+1}^{\infty}q^{-a}c_1(\chi, a)$ converges, we derive that
\begin{align}\label{3.8}
\sum_{a=d_1+1}^{\infty}q^{-a}Z_{\tilde{g}_{1, a}}(s, \chi, (\mathcal{O}_K^{\times})^2)
=&q^{-e_1s}\sum_{a=d_1+1}^{\infty}q^{-a}c_1(\chi, a)\nonumber\\
:=&c_1(\chi)q^{-e_1s},
\end{align}
where $c_1(\chi)$ is a constant depend on $\chi$.

Putting (\ref{eqno 3.7}) and (\ref{3.8}) into (\ref{3.6}),
we get that
\begin{equation}\label{eqno 3.9}
Z_g\big(s, \chi, S(\Delta_{\gamma_1}\bigcap(\mathbb{N}^2\setminus \{0\}))\big)
=\dfrac{G_1(q^{-s})}{1-q^{-1-s}},
\end{equation}
where $G_1(x)$ is a polynomial with complex coefficients depending on $\chi$.

For $Z_g\big(s, \chi, S(\Delta_{\gamma_2}\bigcap(\mathbb{N}^2\setminus \{0\}))\big)$,
By Lemma \ref{lem 2.5}, we have
\begin{align*}
\Delta_{\gamma_2}\bigcap (\mathbb{N}^2\setminus \{0\})
=\bigcup\limits_{m=0}^{j_0-1}\big\{(m+bj_0, \frac{mi_0+n_m}{j_0}+bi_0+a)
|a, b\in \mathbb{Z}^{+}\}
\end{align*}
Denote $w(m):=\frac{mi_0+n_m}{j_0}$. Then
\begin{equation*}
S(\Delta_{\gamma_2}\bigcap(\mathbb{N}^2\setminus \{0\}))=
\bigcup_{m=0}^{j_0-1}\bigcup_{a=1}^{\infty}\bigcup_{b=1}^{\infty}
(\pi^{m+bj_0}\mathcal{O}_K^{\times}
\times\pi^{w(m)+a+bi_0}\mathcal{O}_K^{\times}).
\end{equation*}
Thus one has
\begin{equation}\label{eqno 3.10}
Z_g\big(s, \chi, S(\Delta_{\gamma_2}\bigcap(\mathbb{N}^2\setminus \{0\}))\big)
=\sum_{m=0}^{j_0-1}\sum_{a=1}^{\infty}\sum_{b=1}^{\infty}
Z_{g}(s, \chi, D_2(a, b)),
\end{equation}
where $D_2(a, b)=\pi^{m+bj_0}\mathcal{O}_K^{\times}
\times\pi^{w(m)+a+bi_0}\mathcal{O}_K^{\times}$.
For $Z_{g}(s, \chi, D_2(a, b))$, we set $t_1=m+bj_0$ 
and $t_2=w(m)+a+bi_0$ in (\ref{eqno 3.1}).
Then by Lemma \ref{lem 2.11} and (\ref{3.2}), one has
\begin{align}\label{3.11}
&Z_g\big(s, \chi, S(\Delta_{\gamma_2}\bigcap(\mathbb{N}^2\setminus \{0\}))\big)\nonumber\\
=&\sum_{m=0}^{j_0-1}\sum_{a=1}^{\infty}
\sum_{b=1}^{\infty}q^{-(m+w(m)+a+b(i_0+j_0))}\nonumber\\
\times&\Big(\int_{(\mathcal{O}_K^{\times})^2}
\chi(ac(\sum_{i=i_0}^{\infty}\alpha_i\pi^{(m+bj_0)i}u^i
+\sum_{j=j_0}^{\infty}\beta_j\pi^{(w(m)+a+bi_0)j}v^j))\nonumber\\
\times&|\sum_{i=i_0}^{\infty}\alpha_i\pi^{(m+bj_0)i}u^i
+\sum_{j=j_0}^{\infty}\beta_j\pi^{(w(m)+a+bi_0)j}v^j|^s|dudv|\Big)\nonumber\\
:=&\sum_{b=1}^{\infty}q^{-b(i_0+j_0+i_0j_0s)}\sum_{m=0}^{j_0-1}q^{-m-w(m)}
\sum_{a=1}^{\infty}q^{-a}Z_{g_{2, a}}(s, \chi, (\mathcal{O}_K^{\times})^2),
\end{align}
where
$$g_{2, a}(u, v)=\sum_{i=i_0}^{\infty}\alpha_i\pi^{(m+bj_0)i-bi_0j_0}u^i
+\sum_{j=j_0}^{\infty}\beta_j\pi^{(w(m)+a+bi_0)j-bi_0j_0}v^j,$$

Since for all $i>i_0$ and $j>j_0$, we have
${\rm ord}(\alpha_i)>e_1$ and ${\rm ord}(\beta_j)>e_2$.
It is easy to see that
$${\rm ord}(\alpha_i\pi^{(m+bj_0)i-bi_0j_0})
>{\rm ord}(\alpha_{i_0}\pi^{(m+bj_0)i_0-bi_0j_0})=e_1+mi_0,$$
and
$${\rm ord}(\beta_j\pi^{(w(m)+a+bi_0)j-bi_0j_0})
>{\rm ord}(\beta_{j_0}\pi^{(w(m)+a+bi_0)j_0-bi_0j_0})=e_2+w(m)+aj_0.$$
Hence $g_{2, a}$ are polynomials of form (\ref{eqno 1.2}). 
Then Lemma \ref{lem 2.8} gives us that
$$Z_{g_{2, a}}(s, \chi, (\mathcal{O}_K^{\times})^2)=
Z_{\tilde{g}_{2, a}}(s, \chi, (\mathcal{O}_K^{\times})^2),$$
where $\tilde{g}_{2, a}(u, v)=\alpha_{i_0}\pi^{mi_0}u^{i_0}
+\beta_{j_0}\pi^{(w(m)+a)j_0}v^{j_0}$ (notice that $\tilde{g}_{2, a}$
are independent of $b$).

If $1\le a\le d_2$ (if $a$ exists), using Lemma \ref{lem 2.8}, one has
\begin{equation}\label{eqno 3.12}
Z_{\tilde{g}_{2, a}}(s, \chi, (\mathcal{O}_K^{\times})^2)
=\dfrac{G_{2, a, m}(q^{-s})}{1-q^{-1-s}},
\end{equation}
where $G_{2, a, m}(x)$ is a polynomial with complex coefficients depending on
$\chi$, $a$ and $m$, but independent of $b$.

If $a\ge d_2+1$, then $e_2+w(m)+aj_0>e_1+mi_0$, that is
$${\rm ord}(\beta_{j_0}\pi^{(w(m)+a+bi_0)j_0-bi_0j_0})>
{\rm ord}(\alpha_{i_0}\pi^{(m+bj_0)i_0-bi_0j_0}).$$

Thus by Lemma \ref{lem 2.8}, it follows that
\begin{equation}\label{eqno 3.13}
Z_{\tilde{g}_{2, a}}(s, \chi, (\mathcal{O}_K^{\times})^2)
=c_2(\chi, a, m)q^{-(e_1+mi_0)s},
\end{equation}
where $c_2(\chi, a, m)$ is a constant depend on $\chi$, $a$ and $m$
with $|c_2(\chi, a, m)|_{\infty}\le 1$. Since the series
$\sum\limits_{a=d_2+1}^{\infty}q^{-a}c_2(\chi, a, m)$ converges,
then putting (\ref{eqno 3.12}) and (\ref{eqno 3.13}) into (\ref{3.11}),
we obtain that
\begin{align}\label{3.14}
Z_g\big(s, \chi, S(\Delta_{\gamma_2}\bigcap(\mathbb{N}^2\setminus \{0\}))\big)
=&\sum_{m=0}^{j_0-1}q^{-m-w(m)}\dfrac{\tilde{G}_2(q^{-s})}{1-q^{-1-s}}
\sum_{b=1}^{\infty}q^{-b(i_0+j_0+i_0j_0s)}\nonumber\\
=&\dfrac{G_2(q^{-s})}{(1-q^{-1-s})(1-q^{-i_0-j_0-i_0j_0s})},
\end{align}
where $G_2(x)$ is a polynomial with complex coefficients depending on $\chi$.

For $Z_g\big(s, \chi, S(\Delta_{\gamma_3}\bigcap(\mathbb{N}^2\setminus \{0\}))\big)$,
Lemma \ref{lem 2.5} tells us that
$$\Delta_{\gamma_3}\bigcap (\mathbb{N}^2\setminus \{0\})
=\{(aj_0, ai_0)|a\in \mathbb{Z}^{+}\}.$$
Then one derives that
\begin{equation*}
S(\Delta_{\gamma_3}\bigcap (\mathbb{N}^2\setminus \{0\}))
=\bigcup_{a=1}^{\infty}(\pi^{aj_0}\mathcal{O}_K^{\times}
\times\pi^{ai_0}\mathcal{O}_K^{\times}).
\end{equation*}
So
$$Z_g\big(s, \chi, S(\Delta_{\gamma_1}\bigcap(\mathbb{N}^2\setminus \{0\}))\big)
=\sum_{a=1}^{\infty}Z_g(s, \chi, D_3(a)),$$
where $D_3(a)=\pi^{aj_0}\mathcal{O}_K^{\times}\times\pi^{ai_0}\mathcal{O}_K^{\times}$.
For $Z_g(s, \chi, D_3(a))$, we set $t_1=aj_0$ and $t_2=ai_0$ in (\ref{eqno 3.1}).
Then from Lemma \ref{lem 2.10} and (\ref{3.2}), we get that
\begin{align}\label{3.15}
&Z_g\big(s, \chi, S(\Delta_{\gamma_1}\bigcap(\mathbb{N}^2\setminus \{0\}))\big)\nonumber\\
=&\sum_{a=1}^{\infty}q^{-a(i_0+j_0)}\nonumber\\
\times&\int_{(\mathcal{O}_K^{\times})^2}
\chi(ac(\sum_{i=i_0}^{\infty}\alpha_i\pi^{aj_0i}u^i
+\sum_{j=j_0}^{\infty}\beta_j\pi^{ai_0j}v^j))
|\sum_{i=i_0}^{\infty}\alpha_i\pi^{aj_0i}u^i
+\sum_{j=j_0}^{\infty}\beta_j\pi^{ai_0j}v^j|^s|dudv|\nonumber\\
:=&\sum_{a=1}^{\infty}q^{-a(i_0+j_0+i_0j_0s)}
Z_{g_{3, a}}(s, \chi, (\mathcal{O}_K^{\times})^2),
\end{align}
where
$$g_{3, a}(u, v)=\sum_{i=i_0}^{\infty}\alpha_i\pi^{aj_0(i-i_0)}u^i
+\sum_{j=j_0}^{\infty}\beta_j\pi^{ai_0(j-j_0)}v^j.$$
It is no hard to see that $g_{3, a}$ are polynomials of form (\ref{eqno 1.2}),
so Lemma \ref{lem 2.8} gives us that
$$Z_{g_{3, a}}(s, \chi, (\mathcal{O}_K^{\times})^2)=
Z_{g_3}(s, \chi, (\mathcal{O}_K^{\times})^2),$$
where $g_3(u, v):=\alpha_{i_0}u^{i_0}+\beta_{j_0}v^{j_0}$.
By Lemma \ref{lem 2.2}, one has
\begin{equation}\label{eqno 3.16}
Z_{g_3}(s, \chi, (\mathcal{O}_K^{\times})^2)=\dfrac{\tilde{G}_3(q^{-s})}{1-q^{-1-s}},
\end{equation}
where $\tilde{G}_3(x)$ is a polynomial with complex coefficients
depending on $\chi$ but independent on $a$.

Thus using (\ref{3.15}) and (\ref{eqno 3.16}), we arrive at
\begin{align}\label{3.17}
Z_g\big(s, \chi, S(\Delta_{\gamma_3}\bigcap(\mathbb{N}^2\setminus \{0\}))\big)
=&\dfrac{\tilde{G}_3(q^{-s})}{1-q^{-1-s}}\sum_{a=1}^{\infty}q^{-a(i_0+j_0+i_0j_0s)}\nonumber\\
=&\dfrac{G_3(q^{-s})}{(1-q^{-1-s})(1-q^{-i_0-j_0-i_0j_0s})},
\end{align}
where $G_3(x)$ is a polynomial with complex coefficients depending on $\chi$.

For
$Z_g\big(s, \chi, S(\Delta_{\gamma_4}\bigcap(\mathbb{N}^2\setminus \{0\}))\big)$,
it is easy to see that this case is symmetry to the
case $i=2$. Thus use the same discussion, we have
\begin{equation}\label{eqno 3.18}
Z_g\big(s, \chi, S(\Delta_{\gamma_4}\bigcap(\mathbb{N}^2\setminus \{0\}))\big)
=\dfrac{G_4(q^{-s})}{(1-q^{-1-s})(1-q^{-i_0-j_0-i_0j_0s})},
\end{equation}
where $G_4(x)$ is a polynomial with complex coefficients depending on $\chi$.

Similarly, the case $i=5$ is symmetry to the case $i=1$.
Hence it follows that
\begin{equation}\label{eqno 3.19}
Z_g\big(s, \chi, S(\Delta_{\gamma_5}\bigcap(\mathbb{N}^2\setminus \{0\}))\big)
=\dfrac{G_5(q^{-s})}{1-q^{-1-s}},
\end{equation}
where $G_5(x)$ is a polynomial with complex coefficients depending on $\chi$.

Put (\ref{eqno 3.5}), (\ref{eqno 3.9}), (\ref{3.14}), (\ref{3.17}),
(\ref{eqno 3.18}) and (\ref{eqno 3.19}) into (\ref{3.4}), we derive that
$$Z_g(s, \chi)=\dfrac{G(q^{-s})}{(1-q^{-1-s})(1-q^{-i_0-j_0-i_0j_0s})},$$
where $G(q^{-s})$ is a polynomial with complex coefficients depending on $\chi$.

This finishes the proof of Theorem \ref{thm 1.1}.
\hfill$\Box$

\section{\bf Proof of Theorem \ref{thm 1.2}}

In this section, we give the proof of Theorem \ref{thm 1.2}.
In fact, we can use Theorem \ref{thm 1.1} to show Theorem \ref{thm 1.2}.\\

{\it Proof of Theorem \ref{thm 1.2}}.
Let $f(x, y, z)=x^p+y^rz^l\mathbb{H}_r^k(y, tz-y)$
be a hybrid polynomial of form (\ref{1.1}) with 
$t^{1/p}\in \mathcal{O}_K^{\times}$.
We divide the proof into the following two cases.

{\sc Case 1.} $p\mid\binom{k+r}{r}$. We claim that 
there is a polynomial $h(y, z)\in\mathcal{O}_K[y, z]$ such that 
\begin{equation*}
f(x, y, z)=x^p+h(y, z)^p.
\end{equation*}
In fact, by (\ref{eqno 2.1}), we have
\begin{align*}
f(x, y, z)=&x^p+\sum_{i=0}^k(-1)^i\dbinom{k+r}{k-i}
\dbinom{i+r-1}{i}t^{k-i}y^{r+i}z^{k+l-i}\\
=&x^p+\sum_{i=0}^k(-1)^ia_{k, r}(i)t^{k-i}y^{r+i}z^{k+l-i},
\end{align*}
where $a_{k, r}(i):=\binom{k+r}{k-i}\binom{i+r-1}{i}$.
On the other hand, for any $0\le i\le k$, one has
\begin{align*}
a_{k, r}(i)=&\dfrac{(k+r)!}{(r+i)!(k-i)!}\cdot
\dfrac{(i+r-1)!}{(r-1)!i!}\\
=&\dfrac{r}{r+i}\cdot\dfrac{k!}{(k-i)!i!}\cdot
\dfrac{(k+r)!}{r!k!}\\
=&\dfrac{r}{r+i}\dbinom{k}{i}\dbinom{k+r}{r}.
\end{align*}

If $p\mid a_{k, r}(i)$, we must have $a_{k, r}(i)=0$ 
over $K$ since the characteristic of $K$ is $p$.

If $p\nmid a_{k, r}(i)$, then we have $p\mid (r+i)$ since
$p\mid\binom{k+r}{r}$. But $p\mid(k+r+l)$, and so 
$p\mid(k+l-i)$. Moreover, one has $a_{k, r}(i)^p=a_{k, r}(i)$ 
over $K$. Since $t^{1/p}\in \mathcal{O}_K^{\times}$
and $(-1)^{p}=-1\in \mathcal{O}_K^{\times}$, it follows that
\begin{align*}
f(x, y, z)=&x^p+\sum_{i=0}^k(-1)^ia_{k, r}(i)t^{k-i}y^{r+i}z^{k+l-i}\\
=&x^p+\sum_{i=0 \atop p\nmid a_{k, r}(i)}^k
\Big((-1)^ia_{k, r}(i)t^{(k-i)/p}y^{(r+i)/p}z^{(k+l-i)/p}\Big)^p\\
=&x^p+\Big(\sum_{i=0 \atop p\nmid a_{k, r}(i)}^k
(-1)^ia_{k, r}(i)t^{(k-i)/p}y^{(r+i)/p}z^{(k+l-i)/p}\Big)^p\\
:=&x^p+h(y, z)^p
\end{align*}
as desired. So the claim is true.

Now for $Z_f(s, \chi)$, we make the following change of variables:
$$(x, y, z)\mapsto (x-h(y, z), y, z).$$ 
Since $|dx|$ is the Haar measure on $K$, and normalized 
such that the measure of $\mathcal{O}_K$ is one, we have 
\begin{align}\label{4.1}
Z_f(s, \chi)=&\int_{\mathcal{O}_K^3}
\chi(ac((x-h(y, z))^p+h(y, z)^p))|(x-h(y, z))^p+h(y, z)^p|^s|dxdydz|\nonumber\\
=&\int_{\mathcal{O}_K}\chi(ac(x^p))|x^p|^s|dx|\int_{\mathcal{O}_K^2}|dydz|\nonumber\\
=&\int_{\mathcal{O}_K}\chi(ac(x^p))|x^p|^s|dx|\nonumber\\
=&\int_{\mathcal{O}_K^{\times}}\chi(ac(x^p))|dx|
+\int_{\pi\mathcal{O}_K}\chi(ac(x^p))|x^p|^s|dx|\nonumber\\
=&\int_{\mathcal{O}_K^{\times}}\chi(ac(x^p))|dx|+q^{-1-ps}Z_f(s, \chi).
\end{align}
By (\ref{4.1}), we arrive at
$$(1-q^{-1-ps})Z_f(s, \chi)=\int_{\mathcal{O}_K^{\times}}\chi(ac(x^p))|dx|.$$
Therefore Lemma \ref{lem 2.9} gives us that
\begin{align*}
Z_f(s, \chi)={\left\{\begin{array}{rl}
\dfrac{1-q^{-1}}{1-q^{-1-ps}}, \ \ \ \ &{\rm if} \ \chi^p=\chi_{{\rm triv}},\\
0,\ \ \ \ &{\rm if} \ \chi^p\ne\chi_{{\rm triv}}.
\end{array}\right.}
\end{align*}
Hence the proof of Theorem \ref{thm 1.2} is complete in this case.

{\sc Case 2.} $p\nmid\binom{k+r}{r}$. At first, we define a set
$A\subseteq\mathcal{O}_K^3$ by
\begin{equation*}
A:=\{(x, y, z)\in\mathcal{O}_K^3|
{\rm ord}(x)\geq \omega,\ {\rm ord}(y)\geq 1,\ {\rm ord}(z)\geq 1\},
\end{equation*}
where $\omega:=\frac{k+r+l}{p}$. Then by (\ref{3.2}), we have
\begin{align}\label{4.2}
Z_f(s, \chi, A)=q^{-i-j-k}\int_{\tilde A}\chi(acf(\pi^ix, \pi^jy, \pi^kz))
|f(\pi^ix, \pi^jy, \pi^kz)|^s|dxdydz|,
\end{align}
where
$$\tilde A=\{(x, y, z)\in K^3| (\pi^i x, \pi^j y, \pi^k z)\in\mathcal{O}_K^3,
{\rm ord}(\pi^ix)\geq \omega,\ {\rm ord}(\pi^jy)\geq 1,\ {\rm ord}(\pi^kz)\geq 1\}.$$
Now letting $i=\omega,\ j=k=1$, then Lemma \ref{lem 2.11} (1) gives us that
$\tilde A=\mathcal{O}_K^3$. So by (\ref{4.2}), we derive that
\begin{align}\label{4.3}
Z_f(s, \chi, A)=&q^{-(\omega+2)}\int_{\mathcal{O}_K^3}
\chi(ac(\pi^{k+r+l}x^p+\pi^{r+l}y^rz^l\mathbb{H}_r^k(\pi y, t\pi z-\pi y)))\nonumber\\
\times&|\pi^{k+r+l}x^p+\pi^{r+l}y^rz^l\mathbb{H}_r^k(\pi y, t\pi z-\pi y)|^s|dxdydz|\nonumber\\
=&q^{-(\omega+2)-(k+r+l)s}Z_f(s, \chi)\nonumber\\
=&q^{-(\omega+2)}\int_{\mathcal{O}_K^3}
\chi(ac(\pi^{k+r+l}x^p+\pi^{k+r+l}y^rz^l\mathbb{H}_r^k(y, tz-y)))\nonumber\\
\times&|\pi^{k+r+l}x^p+\pi^{k+r+l}y^rz^l\mathbb{H}_r^k(y, tz-y)|^s|dxdydz|\nonumber\\
=&q^{-(\omega+2)-(k+r+l)s}Z_f(s, \chi)
\end{align}
On the other hand, let $A^c$ be the complement of $A$ in $\mathcal{O}_K^3$,
then since $\mathcal{O}_K^3=A\cup A^c$, one has
\begin{equation}\label{eqno 4.4}
Z_f(s, \chi, A)=Z_f(s, \chi)-Z_f(s, \chi, A^c).
\end{equation}
By (\ref{4.3}) and (\ref{eqno 4.4}), we have
$$Z_f(s, \chi)=\dfrac{1}{1-q^{-(\omega+2)-(k+r+l)s}}Z_f(s, \chi, A^c).$$

Moreover, one easily derives that $A^c$ can be
decomposed as the disjoint union of the following seven sets:
\begin{align*}
A_1&=\{(x, y, z)|{\rm ord}(x)\geq \omega,\ {\rm ord}(y)=0,\ {\rm ord}(z)\geq 1\},\\
A_2&=\{(x, y, z)|{\rm ord}(x)\geq \omega,\ {\rm ord}(y)\geq 1,\ {\rm ord}(z)=0\},\\
A_3&=\{(x, y, z)|{\rm ord}(x)\geq \omega,\ {\rm ord}(y)=0,\ {\rm ord}(z)=0\},\\
A_4&=\{(x, y, z)|0\le{\rm ord}(x)<\omega,\ {\rm ord}(y)\geq 1,\ {\rm ord}(z)\geq 1\},\\
A_5&=\{(x, y, z)|0\le{\rm ord}(x)<\omega,\ {\rm ord}(y)=0,\ {\rm ord}(z)\geq 1\},\\
A_6&=\{(x, y, z)|0\le{\rm ord}(x)<\omega,\ {\rm ord}(y)\geq 1,\ {\rm ord}(z)=0\},\\
A_7&=\{(x, y, z)|0\le{\rm ord}(x)<\omega,\ {\rm ord}(y)=0,\ {\rm ord}(z)=0\}.
\end{align*}
It then follows that
\begin{equation}\label{eqno 4.5}
Z_f(s, \chi)=\dfrac{1}{1-q^{-(\omega+2)-(k+r+l)s}}
\sum_{i=1}^{7}Z_f(s, \chi, A_i).
\end{equation}
In what follows, we calculate the seven integrals
on the right hand side of (\ref{eqno 4.5}), respectively.

For $Z_f(s, \chi, A_1)$, we set $i=\omega,\ j=0,\ k=1$ in (\ref{eqno 3.1}).
Then by (\ref{3.2}) and Lemma \ref{lem 2.11}, we have
\begin{align*}
&Z_f(s, \chi, A_1)\\
=&q^{-(\omega+1)}\\
\times&\int_{B_1}\chi(ac(\pi^{k+r+l}x^p+\pi^ly^rz^l
\mathbb{H}_r^k(y, t\pi z-y)|\pi^{k+r+l}x^p+\pi^ly^rz^l
\mathbb{H}_r^k(y, t\pi z-y)|^s|dxdydz|\\
=&q^{-(\omega+1)-ls}\\
\times&\int_{B_1}\chi(ac(\pi^{k+r}x^p+y^rz^l
\mathbb{H}_r^k(y, t\pi z-y)|\pi^{k+r}x^p+y^rz^l
\mathbb{H}_r^k(y, t\pi z-y)|^s|dxdydz|,
\end{align*}
where
$$B_1:=\{(x, y, z)\in K^3| (\pi^{\omega} x, \pi y, z)\in\mathcal{O}_K^3,
{\rm ord}(\pi^{\omega}x)\geq \omega,\ {\rm ord}(y)=0,\ {\rm ord}(\pi z)\ge 1 \}.$$
Then Lemma \ref{lem 2.11} tells us that
$B_1=\mathcal{O}_{K}\times\mathcal{O}_{K}^{\times}\times\mathcal{O}_{K}$.

Now we make the change of variables of the form:
$(x, y, z)\mapsto (uw^{(k+r+l)/p}, w, vw)$.
Since $|w|=1$ for $w\in \mathcal{O}_K^{\times}$, we derive that
\begin{align*}
&Z_f(s, A_1)\\
=&q^{-(\omega+1)-ls}
\int_{\mathcal{O}_K^2}\chi(ac(\pi^{k+r}u^pw^{k+r+l}+w^{r+l}v^l
\mathbb{H}_r^k(w, t\pi vw-w)\\
\times&|\pi^{k+r}u^pw^{k+r+l}+w^{r+l}v^l\mathbb{H}_r^k(w, t\pi vw-w)|^s
|-w^{\frac{k+r+l}{p}+1}||dudvdw|\\
:=&q^{-(\omega+1)-ls}Z_{f_1}(s, \chi)
\int_{\mathcal{O}_K^{\times}}\chi(ac(w^{k+r+l}))|dw|,
\end{align*}
where $f_1(u, v)=\pi^{k+r}u^p+v^l\mathbb{H}_r^k(1, t\pi v-1)$.

By Lemma \ref{lem 2.9}, we get that
\begin{align*}
\int_{\mathcal{O}_K^{\times}}\chi(ac (w^{k+r+l}))|dw|
={\left\{\begin{array}{rl}
1-q^{-1},\ \ \ &{\rm if}\ \chi^{k+r+l}=\chi_{\rm triv},\\
0,\ \ \ &{\rm if}\ \chi^{k+r+l}\neq \chi_{\rm triv}.
\end{array}\right.}
\end{align*}
Hence it follows that
\begin{align*}
Z_f(s, \chi, A_1)={\left\{\begin{array}{rl}
(1-q^{-1})q^{-(\omega+1)-ls}Z_{f_1}(s, \chi),
\ \ \ &{\rm if}\ \chi^{k+r+l}=\chi_{\rm triv},\\
0,\ \ \ &{\rm if}\ \chi^{k+r+l}\neq \chi_{\rm triv}.
\end{array}\right.}
\end{align*}
Notice that
\begin{align*}
f_1(u, v)=&\pi^{k+r}u^p+v^l\sum_{i=0}^k(-1)^i
\dbinom{k+r}{k-i}\dbinom{i+r-1}{i}(\pi t)^{k-i}v^{k-i}\\
=&\pi^{k+r}u^p+(-1)^k\dbinom{k+r-1}{k}v^l+\sum_{i=0}^{k-1}(-1)^i
\dbinom{k+r}{k-i}\dbinom{i+r-1}{i}(\pi t)^{k-i}v^{k+l-i}.
\end{align*}
But $p\mid(k+r+l)$ and $p\nmid l$, so $p\nmid (k+r)$.
On the other hand, we have
$$\dbinom{k+r-1}{k}=\dbinom{k+r-1}{r-1}=\dfrac{k+r}{r}\dbinom{k+r}{r}.$$
Then $p\nmid\binom{k+r-1}{k}$ because $p\nmid(k+r)\binom{k+r}{r}$, which
implies that $f_1$ is a polynomial of form (\ref{eqno 1.2}).
Thus by Theorem \ref{thm 1.1}, we get that
\begin{align}\label{4.6}
Z_f(s, \chi, A_1)=&(1-q^{-1})q^{-(\omega+1)-ls}Z_{f_1}(s, \chi)\nonumber\\
=&\dfrac{F_1(q^{-s})}{(1-q^{-1-s})(1-q^{-p-l-pls})},
\end{align}
where $F_1(x)$ is a polynomial with complex coefficients depending on $\chi$.

For $Z_f(s, \chi, A_2)$, setting $i=\omega,\ j=1$ and $k=0$ in (\ref{eqno 3.1})
and by (\ref{3.2}), one has
\begin{align*}
&Z_f(s, \chi, A_2)\\
=&q^{-(\omega+1)}\int_{B_2}\chi(ac(\pi^{k+r+l}x^p+\pi^ry^rz^l
\mathbb{H}_r^k(\pi y, tz-\pi y)))\\
\times&|\pi^{k+r+l}x^p+\pi^ry^rz^l
\mathbb{H}_r^k(\pi y, tz-\pi y)|^s|dxdydz|\\
=&q^{-(\omega+1)-rs}\\
\times&\int_{B_2}\chi(ac(\pi^{k+r}x^p+y^rz^l
\mathbb{H}_r^k(\pi y, tz-\pi y)|\pi^{k+r}x^p+y^rz^l
\mathbb{H}_r^k(\pi y, tz-\pi y)|^s|dxdydz|,
\end{align*}
where
$$B_2=\{(x, y, z)\in K^3| (\pi^{\omega} x, \pi y, z)\in\mathcal{O}_K^3,
{\rm ord}(\pi^{\omega}x)\geq \omega,\ {\rm ord}(\pi y)\ge 1,\ {\rm ord}(z)=0 \}.$$
Then Lemma \ref{lem 2.11} gives us that
$B_2=\mathcal{O}_K^{2}\times\mathcal{O}_K^{\times}$.

We make the change of variables of the form:
$(x, y, z)\mapsto (uw^{(k+r+l)/p}, vw, w)$. Then we deduce that
\begin{align*}
&Z_f(s, \chi, A_2)\\
=&q^{-\omega-1-rs}\int_{B_2}\chi(ac(\pi^{k+l}w^{k+r+l}u^p+w^{r+l}v^r
\mathbb{H}_r^k(\pi vw, tw-\pi vw)))\\
\times&|\pi^{k+l}w^{k+r+l}u^p+w^{r+l}v^r
\mathbb{H}_r^k(\pi vw, tw-\pi vw)|^s|w^{\frac{k+r+l}{p}+1}||dudvdw|,\\
:=&q^{-(\omega+1)-rs}Z_{f_2}(s, \chi)
\int_{\mathcal{O}_K^{\times}}\chi(ac(w^{k+r+l}))|dw|,
\end{align*}
where $f_2(u, v)=\pi^{k+l}u^p+v^r\mathbb{H}_r^k(\pi v, t-\pi v)$, and
the last equality is because $|w|=1$ for $w\in \mathcal{O}_K^{\times}$.
Using Lemma \ref{lem 2.9}, we have
\begin{align*}
Z_f(s, \chi, A_2)={\left\{\begin{array}{rl}
(1-q^{-1})q^{-(\omega+1)-rs}Z_{f_2}(s, \chi),
\ \ \ &{\rm if}\ \chi^{k+r+l}=\chi_{\rm triv},\\
0,\ \ \ &{\rm if}\ \chi^{k+r+l}\neq \chi_{\rm triv}.
\end{array}\right.}
\end{align*}
On the other hand,
\begin{align*}
f_2(u, v)=&\pi^{k+l}u^p+v^r
\sum_{i=0}^k(-1)^i\dbinom{k+r}{k-i}\dbinom{i+r-1}{i}
t^{k-i}\pi^iv^i\\
=&\pi^{k+l}u^p+t^k\dbinom{k+r}{k}v^r+
\sum_{i=1}^k(-1)^i\dbinom{k+r}{k-i}\dbinom{i+r-1}{i}
t^{k-i}\pi^iv^{r+i}.
\end{align*}
Since $p\nmid\binom{k+r}{k}$ and $t^k\in \mathcal{O}_K^{\times}$,
so $f_2$ is a polynomial of form (\ref{eqno 1.2}). Hence Theorem \ref{thm 1.1}
tells us that
\begin{align}\label{4.7}
Z_f(s, \chi, A_2)=&(1-q^{-1})q^{-\omega-1-rs}Z_{f_2}(s, \chi)\nonumber\\
=&\dfrac{F_2(q^{-s})}{(1-q^{-1-s})(1-q^{-p-r-prs})}
\end{align}
where $F_2(x)$ is a polynomial with complex coefficients depending on $\chi$.

For $Z_f(s, \chi, A_3)$, we set $i=\omega,\ j=k=0$ in (\ref{eqno 3.1}).
Then by (\ref{3.2}), one has
$$Z_f(s, \chi, A_3)=q^{-\omega}Z_{f_3}(s, \chi, B_3),$$
where $f_3(x, y, z)=\pi^{k+r+l}x^p+y^rz^l\mathbb{H}_r^k(y, tz-y)$,
and
$$B_3=\{(x, y, z)\in K^3| (\pi^{\omega} x, y, z)\in\mathcal{O}_K^3,
{\rm ord}(\pi^{\omega}x)\geq \omega,\ {\rm ord}(y)={\rm ord}(z)=0 \}.$$
By Lemma \ref{lem 2.10}, we derive that
$B_3=\mathcal{O}_K\times(\mathcal{O}_K^{\times})^2$, which implies
that $\bar{B_3}=\mathbb{F}_q\times(\mathbb{F}_q^{\times})^2$.
Using Lemma \ref{lem 2.1}, one has
\begin{equation*}
\dfrac{\partial \bar{f}_3}{\partial y}(x, y, z)
=\bar{r}\overline{\dbinom{k+r}{r}}y^{r-1}z^l(\bar{t}z-y)^k.
\end{equation*}
If $P=(x, y, z)\in {\rm Sing}_{\bar{f}_3}(\mathbb{F}_q)$, then
$\frac{\partial \bar{f}_3}{\partial y}=\bar{0}$
gives us that $zy(\bar{t}z-y)=\bar{0}$. We claim $y=\bar 0$ or $z=\bar 0$.
In fact, if the claim is not true, then we must have $y=\bar{t}z$. Thus
$\bar{f}_3(x, y, z)={\bar{t}z}^{k+r+l}\ne\bar{0}$, which is contracted to the
assumption $P\in {\rm Sing}_{\bar{f}_3}(\mathbb{F}_q)$. So the
claim is proved. But $\bar{B_3}=\mathbb{F}_q\times(\mathbb{F}_q^{\times})^2$.
Hence we deduce that
${\rm Sing}_{\bar{g}}(\mathbb{F}_q)\bigcap \bar{B_3}=\emptyset$,
which tells us that $S(g, B_3)=\emptyset$. By Lemma \ref{lem 2.2},
we obtain that
\begin{equation}\label{eqno 4.8}
Z_f(s, \chi, A_3)=\dfrac{F_3(q^{-s})}{1-q^{-1-s}},
\end{equation}
where $F_3(x)$ is a polynomial with complex coefficients depending on $\chi$.

In what follows, let $a$ be an integer with $0\le a<\omega$.
For any integer $b$ with $4\le b\le 7$, we define the set $A_b^a$ by
$$A_b^a:=\{(x, y, z)\in A_b|{\rm ord}(x)=a\}.$$
Then it follows that $A_b=\bigcup\limits_{0\le a<\omega}A_b^a$ and
\begin{equation}\label{eqno 4.9}
Z_f(s, \chi, A_b)=\sum_{a=0}^{\omega-1}{Z_f(s, \chi, A_b^a)}.
\end{equation}
For $Z_f(s, \chi, A_4^a)$, we set $i=a,\ j=k=1$ in (\ref{eqno 3.1}).
Therefore by (\ref{3.2}) and (\ref{eqno 4.9}), we get that
\begin{align*}
Z_f(s, \chi, A_4)=&\sum_{a=0}^{\omega-1}{Z_f(s, \chi, A_4^a)}\\
=&\sum_{a=0}^{\omega-1}q^{-a-2}\int_{B_4^a}
\chi(ac(\pi^{ap}x^p+\pi^{k+r+l}y^rz^l\mathbb{H}_r^k(y, tz-y)))\\
\times&|\pi^{ap}x^p+\pi^{k+r+l}y^rz^l\mathbb{H}_r^k(y, tz-y)|^s|dxdydz|\\
=&\sum_{a=0}^{\omega-1}q^{-a-2-aps}\int_{B_4^a}
\chi(ac(x^p+\pi^{k+r+l-ap}y^rz^l\mathbb{H}_r^k(y, tz-y)))\\
\times&|x^p+\pi^{k+r+l-ap}y^rz^l\mathbb{H}_r^k(y, tz-y)|^s|dxdydz|,
\end{align*}
where
$$B_4^a=\{(x, y, z)\in K^3| (\pi^{a} x, \pi y, \pi z)\in\mathcal{O}_K^3,
{\rm ord}(\pi^{a}x)=a,\ {\rm ord}(y)\ge 1,\ {\rm ord}(z)\ge 1 \}.$$
So Lemma \ref{lem 2.11} tells us that
$B_4^a=\mathcal{O}_K^{\times}\times\mathcal{O}_K^2$. Since
$x\in \mathcal{O}_K^{\times}$, we have
$$|x^p+\pi^{k+r+l-ap}y^rz^l\mathbb{H}_r^k(y, tz-y)|=1$$
for any $0\leq a<\omega$ and $(x, y, z)\in B_4^a$.
Hence we arrive at
\begin{align}\label{4.10}
Z_f(s, \chi, A_4)=&\sum_{a=0}^{\omega-1}q^{-a-2-aps}\int_{B_4^a}
\chi(ac(x^p+\pi^{k+r+l-ap}y^rz^l\mathbb{H}_r^k(y, tz-y)))|dxdydz|\nonumber\\
:=&F_4(q^{-s}),
\end{align}
where $F_4(x)$ is a polynomial with complex coefficients depending on $\chi$.

For $Z_f(s, \chi, A_5^a)$, we set $i=a,\ j=0,\ k=1$ in (\ref{eqno 3.1}).
Then from (\ref{3.2}) and (\ref{eqno 4.9}), one has
\begin{align}\label{4.11}
Z_f(s, \chi, A_5)=&\sum_{a=0}^{\omega-1}{Z_f(s, \chi, A_5^a)}\nonumber\\
=&\sum_{a=0}^{\omega-1}q^{-a-1}Z_{f_{5, a}}(s, \chi, B_5^a),
\end{align}
where
$$f_{5, a}(x, y, z):=\pi^{ap}x^p+\pi^ly^rz^l
\sum\limits_{i=0}^k(-1)^i\dbinom{k+r}{k-i}\dbinom{i+r-1}{i}
(\pi t)^{k-i}y^iz^{k-i}$$
and
$$B_5^a=\{(x, y, z)\in K^3| (\pi^{a} x, y, \pi z)\in\mathcal{O}_K^3,
{\rm ord}(\pi^{a}x)=a,\ {\rm ord}(y)=0, {\rm ord}(z)\ge 1 \}.$$
Then Lemma \ref{lem 2.11} gives us that
$B_5^a=(\mathcal{O}_K^{\times})^2\times\mathcal{O}_K$.
For $Z_{f_{5, a}}(s, \chi, B_5^a)$, we make the change of variables of
the form: $(x, y, z)\mapsto (uw^{(k+r+l)/p}, w, vw)$, then we have that
\begin{align*}
Z_{f_{5, a}}(s, \chi, B_5^a)={\left\{\begin{array}{rl}
(1-q^{-1})Z_{\tilde{f}_{5, a}}(s, \chi, C_5^a),
\ \ \ &{\rm if}\ \chi^{k+r+l}=\chi_{\rm triv},\\
0,\ \ \ &{\rm if}\ \chi^{k+r+l}\neq \chi_{\rm triv},
\end{array}\right.}
\end{align*}
where
$$\tilde{f}_{5, a}(u, v):=\pi^{ap}u^p+\pi^lv^l
\sum\limits_{i=0}^k(-1)^i\dbinom{k+r}{k-i}\dbinom{i+r-1}{i}
(\pi t)^{k-i}v^{k-i}$$
and $C_5^a=\mathcal{O}_K^{\times}\times\mathcal{O}_K$.
Thus by Lemma \ref{lem 2.12}, we derive that
\begin{align}\label{4.12}
Z_{\tilde{f}_{5, a}}(s, \chi, C_5^a)={\left\{\begin{array}{rl}
F_{5, a}(q^{-s}),\ \ &0\le a<\dfrac{l}{p},\\
\dfrac{\tilde F_{5, a}(q^{-s})}{1-q^{-1-s}},\ \ &{\dfrac{l}{p}\le a<\omega},
\end{array}\right.}
\end{align}
where $F_{5, a}(x), \tilde F_{5, a}(x)$ are polynomials with complex coefficients
depending on $\chi$.

Put (\ref{4.12}) into (\ref{4.11}), one derives that
\begin{equation}\label{eqno 4.13}
Z_f(s, \chi, A_5)=\dfrac{F_5(q^{-s})}{1-q^{-1-s}},
\end{equation}
where $F_5(x)$ is a polynomial with complex coefficients depending on $\chi$.

For $Z_f(s, \chi, A_6^a)$, we set $i=a,\ j=1,\ k=0$ in (\ref{eqno 3.1}).
By (\ref{3.2}) and (\ref{eqno 4.9}), one deduces that
\begin{align}\label{4.14}
Z_f(s, \chi, A_6)
&=\sum_{a=0}^{\omega-1}{Z_f(s, \chi, A_6^a)}\nonumber\\
&=\sum_{a=0}^{\omega-1}q^{-1-a}Z_{f_{6, a}}(s, \chi, B_6^a),
\end{align}
where
$$f_{6, a}(x, y, z):=\pi^{ap}u^p+\pi^ry^rv^l
\sum\limits_{i=0}^k(-1)^i\dbinom{k+r}{k-i}\dbinom{i+r-1}{i}
t^{k-i}\pi^iy^iv^{k-i}$$
and
$$B_6^a=\{(x, y, z)\in K^3| (\pi^{a} x, \pi y, z)\in\mathcal{O}_K^3,
{\rm ord}(\pi^{a}x)=a,\ {\rm ord}(y)\ge 1, {\rm ord}(z)=0 \}.$$
By Lemma \ref{lem 2.11}, we have
$B_6^a=\mathcal{O}_K^{\times}\times\mathcal{O}_K\times\mathcal{O}_K^{\times}$.
For $Z_{f_{6, a}}(s, B_6^a)$, we make the change of variables of
the form: $(x, y, z)\mapsto (uw^{(k+r+l)/p}, vw, w)$, then it follows that
\begin{align*}
Z_{f_{6, a}}(s, \chi, B_6^a)={\left\{\begin{array}{rl}
(1-q^{-1})Z_{\tilde{f}_{6, a}}(s, \chi, C_6^a),
\ \ \ &{\rm if}\ \chi^{k+r+l}=\chi_{\rm triv},\\
0,\ \ \ &{\rm if}\ \chi^{k+r+l}\ne \chi_{\rm triv},
\end{array}\right.}
\end{align*}
where
$$\tilde{f}_{6, a}(u, v):=\pi^{ap}u^p+\pi^rv^r
\sum\limits_{i=0}^k(-1)^i\dbinom{k+r}{k-i}\dbinom{i+r-1}{i}
t^{k-i}\pi^iv^i$$
and $C_6^a=\mathcal{O}_K^{\times}\times\mathcal{O}_K$.
Then using Lemma \ref{lem 2.12}, we get that
\begin{align}\label{4.15}
Z_{\tilde{f}_{6, a}}(s, \chi, C_6^a)={\left\{\begin{array}{rl}
F_{6, a}(q^{-s}),\ \ &0\le a<\dfrac{r}{p},\\
\dfrac{\tilde F_{6, a}(q^{-s})}{1-q^{-1-s}},\ \ &{\dfrac{r}{p}\le a<\omega},
\end{array}\right.}
\end{align}
where $F_{6, a}(x), \tilde F_{6, a}(x)$ are polynomials with complex coefficients
depending on $\chi$.
Put (\ref{4.15}) into (\ref{4.14}), one derives that
\begin{equation}\label{eqno 4.16}
Z_f(s, \chi, A_6)=\dfrac{F_6(q^{-s})}{1-q^{-1-s}},
\end{equation}
where $F_6(x)$ is a polynomial with complex coefficients depending on $\chi$.

For $Z_f(s, \chi, A_7^a)$, we set $i=a,\ j=k=0$ in (\ref{eqno 3.1}).
Then it follows from (\ref{3.2}) and (\ref{eqno 4.9}) that
\begin{align}\label{4.17}
Z_f(s, \chi, A_7)=&\sum_{a=0}^{\omega-1}{Z_f(s, \chi, A_7^a)}\nonumber\\
=&\sum_{a=0}^{\omega-1}q^{-a}{Z_{f_{7, a}}(s, \chi, B_7^a)},
\end{align}
where
$$f_{7, a}(x, y, z):=\pi^{ap}x^p+
y^rz^l\sum_{i=0}^{k}{\dbinom{k+r}{i+r}y^i(tz-y)^{k-i}}$$
and
$$B_7^a=\{(x, y, z)\in K^3| (\pi^{a} x, y, z)\in\mathcal{O}_K^3,
{\rm ord}(\pi^{a}x)=a,\ {\rm ord}(y)={\rm ord}(z)=0 \}.$$
Notice that Lemma \ref{lem 2.11} tells us that 
$B_7^a=(\mathcal{O}_K^{\times})^3$,
which infers that $\bar{B_7^a}=(\mathbb{F}_q^{\times})^3$.

Let $a$ be an integer with $0<a<\omega$. If
$P=(x, y, z)\in {\rm Sing}_{\bar{f}_{7, a}}(\mathbb{F}_q)$, 
then
$$
\frac{\partial \bar{f}_{7, a}}{\partial y}
=\bar{r}\overline{\dbinom{k+r}{r}}y^{r-1}z^l(tz-y)^k=\bar{0}
$$
that yields that $yz=\bar{0}$ (see the discussion for $Z_f(s, \chi, A_3)$).
Thus ${\rm Sing}_{\bar{f}_{7, a}}(\mathbb{F}_q)\bigcap \bar{B_7^a}=\emptyset$,
which implies that $S(f_{7, a}, B_7^a)=\emptyset$. Then by Lemma \ref{lem 2.2},
one has
\begin{equation}\label{eqno 4.18}
Z_{f_{7, a}}(s, \chi, B_7^a)=\dfrac{F_{7, a}(q^{-s})}{1-q^{-1-s}},
\end{equation}
where $F_{7, a}(x)$ is a polynomial with complex coefficients depending on $\chi$.

When $a=0$, by Lemma \ref{lem 2.13}, we arrive at
\begin{equation}\label{eqno 4.19}
Z_{f_{7, 0}}(s, \chi, B_7^a)=\dfrac{F_{7, 0}(q^{-s})}
{(1-q^{-1-s})(1-q^{-p-(k+1)-p(k+1)s})},
\end{equation}
where $F_{7, 0}(x)$ is a polynomial with complex coefficients depending on $\chi$.
Then by (\ref{4.17}), (\ref{eqno 4.18}) and (\ref{eqno 4.19}),
we obtain that
\begin{equation}\label{eqno 4.20}
Z_f(s, \chi, A_7)=\dfrac{F_7(q^{-s})}{(1-q^{-1-s})(1-q^{-p-(k+1)-p(k+1)s})},
\end{equation}
where $F_7(x)$ is a polynomial with complex coefficients depending on $\chi$.

Finally, combining (\ref{eqno 4.5}), (\ref{4.6}), (\ref{4.7}),
(\ref{eqno 4.8}), (\ref{4.10}), (\ref{eqno 4.13}), (\ref{eqno 4.16})
with (\ref{eqno 4.20}) gives us the desired result. So Theorem \ref{thm 1.2}
is proved in this case.

This finishes the proof of Theorem \ref{thm 1.2}. \hfill$\Box$\\

\section{\bf Examples and remarks}

In the final section, we first present some examples to illustrate the
validity of Theorems \ref{thm 1.1} and \ref{thm 1.2}. Consequently,
we give a remark as a conclusion of this paper.

Let $K$ be a non-archimedean local field with characteristic $p$ and
$\mathbb{F}_p$ being its residue field. Put $\chi=\chi_{\rm triv}$ and $t:=p^{-s}$.
Let $f(x, y, z)=x^p+y^rz^l\mathbb{H}_r^k(y, z-y)$ be a hybrid polynomial.
Now we give the following examples.

\begin{exm}\label{exm 5.1}
Let $p=3$, $k=6$, $r=4$ and $l=2$. Then it follows that $3\mid\binom{10}{4}$,
and $f(x, y, z)=x^3+y^9z^3$. Hence we have
$$Z_f(s, \chi)=\dfrac{1-3^{-1}}{1-3^{-1}t^3}.$$
\end{exm}

\begin{exm}\label{exm 5.2}
Let $p=5$, $k=5$, $r=2$ and $l=3$, then $\omega=2$.
Moreover, we have $5\nmid\binom{7}{2}$, and
$$f(x, y, z)=x^5+y^2z^8+y^5z^5-y^7z^3.$$
By (\ref{eqno 4.5}), one has
$$Z_f(s, \chi)=\dfrac{1}{1-5^{-4}t^{10}}\sum_{i=1}^{7}Z_f(s, \chi, A_i),$$
where $A_i$ are sets defined above (\ref{eqno 4.5}). Then we can calculate the seven
integrals explicitly as follows:

For $Z_f(s, \chi, A_1)$, one has
\begin{equation*}
Z_f(s, \chi, A_1)=5^{-3}(1-5^{-1})t^3Z_{g_1}(s, \chi),
\end{equation*}
where $g_1(x, z)=\pi^7x^5-z^3+\pi^2z^5+\pi^5z^8$.
It is easy to see that $g_1$ is a polynomial of form (\ref{eqno 1.2}).
By (\ref{eqno 3.3}), we derive that
\begin{equation*}
Z_{g_1}(s, \chi)=Z_{g_1}\big(s, \chi, (\mathcal{O}_K^{\times})^2\big)
+\sum_{i=1}^5Z_{g_1}\big(s, \chi, S(\Delta_{1, i}\bigcap (\mathbb{N}^2\setminus \{0\}))\big),
\end{equation*}
where $\Delta_{1, 1}=\{(0, a)|a\in \mathbb{R}^{+}\}$,
$\Delta_{1, 2}=\{(3b, a+5b)|a, b\in \mathbb{R}^{+}\}$,
$\Delta_{1, 3}=\{(3a, 5a)|a\in \mathbb{R}^{+}\}$,
$\Delta_{1, 4}=\{(3a+b, 5a)|a, b\in \mathbb{R}^{+}\}$,
$\Delta_{1, 5}=\{(a, 0)|a\in \mathbb{R}^{+}\}$.
Then we deduce that
\begin{align*}
Z_{g_1}\big(s, \chi, (\mathcal{O}_K^{\times})^2\big)=&(1-5^{-1})^2,\\
Z_{g_1}\big(s, \chi, S(\Delta_{1, 1}\bigcap (\mathbb{N}^2\setminus \{0\}))\big)
=&(1-5^{-1})t^3\Big(5^{-3}t^4+(5^{-2}-5^{-3})t^3+5^{-1}-5^{-2}\Big),\\
Z_{g_1}\big(s, \chi, S(\Delta_{1, 2}\bigcap (\mathbb{N}^2\setminus \{0\}))\big)
=&\dfrac{(1-5^{-1})5^{-8}t^{15}}{(1-5^{-8}t^{18})(1-5^{-1}t)}
\Big((5^{-5}-5^{-6})t^9-(5^{-5}-5^{-6})t^7\\
+&(5^{-4}-5^{-5})t^6-5^{-4}t^5+5^{-3}t^4+(5^{-2}-5^{-3})t^3\\
-&(5^{-2}-5^{-3})t+5^{-1}-5^{-2}\Big),\\
Z_{g_1}\big(s, \chi, S(\Delta_{1, 3}\bigcap (\mathbb{N}^2\setminus \{0\}))\big)
=&\dfrac{5^{-8}t^{15}(1-5^{-1})^2}{1-5^{-8}t^{15}},\\
Z_{g_1}\big(s, \chi, S(\Delta_{1, 4}\bigcap (\mathbb{N}^2\setminus \{0\}))\big)
=&\dfrac{(1-5^{-1})5^{-9}t^{15}}{1-5^{-8}t^{15}}
\Big(5^{-7}t^{12}+5^{-5}t^9+5^{-4}t^6+5^{-2}t^3+1\Big)\\
Z_{g_1}\big(s, \chi, S(\Delta_{1, 5}\bigcap (\mathbb{N}^2\setminus \{0\}))\big)
=&5^{-1}-5^{-2}.
\end{align*}
Thus all the candidate poles in Theorem \ref{thm 1.1} are indeed the poles
of $Z_{g_1}(s, \chi)$.

For $Z_f(s, \chi, A_2)$, one has
\begin{equation*}
Z_f(s, \chi, A_2)=5^{-3}(1-5^{-1})t^2Z_{g_2}(s, \chi),
\end{equation*}
where $g_2(x, y)=\pi^8x^5+y^2+\pi^3y^5-\pi^5y^7$,
which is also a a polynomial of form (\ref{eqno 1.2}).
By (\ref{eqno 3.3}), we get that
\begin{equation*}
Z_{g_2}(s, \chi)=Z_{g_2}\big(s, \chi, (\mathcal{O}_K^{\times})^2\big)
+\sum_{i=1}^5Z_{g_2}\big(s, \chi, S(\Delta_{2, i}\bigcap (\mathbb{N}^2\setminus \{0\}))\big),
\end{equation*}
where $\Delta_{2, 1}=\{(0, a)|a\in \mathbb{R}^{+}\}$,
$\Delta_{2, 2}=\{(2b, a+5b)|a, b\in \mathbb{R}^{+}\}$,
$\Delta_{2, 3}=\{(2a, 5a)|a\in \mathbb{R}^{+}\}$,
$\Delta_{2, 4}=\{(2a+b, 5a)|a, b\in \mathbb{R}^{+}\}$,
$\Delta_{2, 5}=\{(a, 0)|a\in \mathbb{R}^{+}\}$.
Then it follows that
\begin{align*}
Z_{g_2}\big(s, \chi, (\mathcal{O}_K^{\times})^2\big)=&(1-5^{-1})^2,\\
Z_{g_2}\big(s, \chi, S(\Delta_{2, 1}\bigcap (\mathbb{N}^2\setminus \{0\}))\big)
=&\dfrac{(1-5^{-1})t^2}{1-5^{-1}t}\Big(5^{-6}t^7+(5^{-4}-2\times5^{-5})t^6-(5^{-4}-5^{-5})t^5\\
+&(5^{-3}-5^{-4})t^4-(5^{-3}-5^{-4})t^3+(5^{-2}-5^{-3})t^2\\
-&(5^{-2}-5^{-3})t+5^{-1}-5^{-2}\Big),\\
Z_{g_2}\big(s, \chi, S(\Delta_{2, 2}\bigcap (\mathbb{N}^2\setminus \{0\}))\big)
=&\dfrac{(1-5^{-1})t^2}{1-5^{-7}t^{10}}\Big(-5^{-16}t^{22}+5^{-16}t^{21}+(5^{-14}-5^{-15})t^{20}\\
-&(5^{-14}-5^{-15})t^{19}+(5^{-13}-5^{-14})t^{18}-(5^{-13}-5^{-14})t^{17}\\
+&(5^{-12}-5^{-13})t^{16}+5^{-6}t^7+(5^{-4}-2\times5^{-5})t^6\\
-&(5^{-4}-5^{-5})t^5+(5^{-3}-5^{-4})t^4-(5^{-3}-5^{-4})t^3\\
+&(5^{-2}-5^{-3})t^2-(5^{-2}-5^{-3})t+5^{-1}-5^{-2}\Big),\\
Z_{g_2}\big(s, \chi, S(\Delta_{2, 3}\bigcap (\mathbb{N}^2\setminus \{0\}))\big)
=&\dfrac{5^{-7}t^{10}(1-5^{-1})^2}{1-5^{-7}t^{10}},\\
Z_{g_2}\big(s, \chi, S(\Delta_{2, 4}\bigcap (\mathbb{N}^2\setminus \{0\}))\big)
=&\dfrac{(1-5^{-1})5^{-8}t^{10}}{1-5^{-7}t^{10}}
\Big(5^{-6}t^8+5^{-5}t^6+5^{-3}t^4+5^{-2}t^2+1\Big),\\
Z_{g_2}\big(s, \chi, S(\Delta_{2, 5}\bigcap (\mathbb{N}^2\setminus \{0\}))\big)
=&5^{-1}-5^{-2}.
\end{align*}
Hence we still have that all the candidate poles in Theorem \ref{thm 1.1}
are indeed the poles of $Z_{g_2}(s, \chi)$.

Moreover, we deduce that
\begin{align*}
Z_f(s, \chi, A_3)=&\dfrac{(1-5^{-1})}{1-5^{-1}t}\Big(5^{-4}t+5^{-2}-2\times5^{-3}\Big),\\
Z_f(s, \chi, A_4)=&(1-5^{-1})\Big(5^{-3}t^5+5^{-2}\Big),\\
Z_f(s, \chi, A_5)=&(1-5^{-1})^2\Big(5^{-3}t^5+(5^{-2}-5^{-3})t^3+5^{-1}\Big),\\
Z_f(s, \chi, A_6)=&(1-5^{-1})^2\Big(5^{-4}t^5+(5^{-3}-5^{-4})t^4+(5^{-2}-5^{-3})t^2+5^{-1}\Big).
\end{align*}

For $Z_f(s, \chi, A_7)$, we have
\begin{align*}
Z_f(s, \chi, A_7)=&\dfrac{(1-5^{-1})^2}{1-5^{-1}t}
\Big(-(5^{-1}+2\times5^{-2}-5^{-3}+5^{-4})t+1-5^{-1}-2\times5^{-2}\Big)\\
+&(5^{-2}-5^{-3})t^5Z_{g_7}(s, \chi),
\end{align*}
where $g_7(x, y)=x^5-2\pi y^6-\pi^2y^7$,
which is a polynomial of form (\ref{eqno 1.2}).
Using (\ref{eqno 3.3}), one derives that
\begin{equation*}
Z_{g_7}(s, \chi)=Z_{g_7}\big(s, \chi, (\mathcal{O}_K^{\times})^2\big)
+\sum_{i=1}^5Z_{g_7}\big(s, \chi, S(\Delta_{7, i}\bigcap (\mathbb{N}^2\setminus \{0\}))\big),
\end{equation*}
where $\Delta_{7, 1}=\{(0, a)|a\in \mathbb{R}^{+}\}$,
$\Delta_{7, 2}=\{(6b, a+5b)|a, b\in \mathbb{R}^{+}\}$,
$\Delta_{7, 3}=\{(6a, 5a)|a\in \mathbb{R}^{+}\}$,
$\Delta_{7, 4}=\{(6a+b, 5a)|a, b\in \mathbb{R}^{+}\}$,
$\Delta_{7, 5}=\{(a, 0)|a\in \mathbb{R}^{+}\}$.
Then we obtain that
\begin{align*}
Z_{g_7}\big(s, \chi, (\mathcal{O}_K^{\times})^2\big)=&(1-q^{-1})^2,\\
Z_{g_7}\big(s, \chi, S(\Delta_{7, 1}\bigcap (\mathbb{N}^2\setminus \{0\}))\big)
=&5^{-1}-5^{-2},\\
Z_{g_7}\big(s, \chi, S(\Delta_{7, 2}\bigcap (\mathbb{N}^2\setminus \{0\}))\big)
=&\dfrac{(1-5^{-1})5^{-12}t^{30}}{1-5^{11}t^{30}}\\
\times&(5^{-10}t^25+5^{-8}t^{20}+5^{-6}t^{15}+5^{-4}t^{10}+5^{-2}t^5+1),\\
Z_{g_7}\big(s, \chi, S(\Delta_{7, 3}\bigcap (\mathbb{N}^2\setminus \{0\}))\big)
=&\dfrac{5^{-11}t^{30}(1-5^{-1})^2}{1-5^{-11}t^{30}},\\
Z_{g_7}\big(s, \chi, S(\Delta_{7, 4}\bigcap (\mathbb{N}^2\setminus \{0\}))\big)
=&\dfrac{(1-5^{-1})5^{-12}t^{31}}{1-5^{-7}t^{10}}\\
\times&(5^{-9}t^8+5^{-5}t^{24}+5^{-7}t^{18}+5^{-5}t^{12}+5^{-3}t^{6}+1),\\
Z_{g_7}\big(s, \chi, S(\Delta_{7, 5}\bigcap (\mathbb{N}^2\setminus \{0\}))\big)
=&(5^{-1}-5^{-2})t.
\end{align*}
Hence all the candidate poles in Theorem \ref{thm 1.1}
are indeed the poles of $Z_{g_7}(s, \chi)$. It follows that 
all the possible poles provided in Theorem \ref{thm 1.2} (2) 
are indeed poles of $Z_f(s, \chi)$.
\end{exm}

{\bf Remark 5.3.}
In the proof of Theorem \ref{thm 1.2}, we used Theorem \ref{thm 1.1}
to calculate $Z_{f_2}(s, \chi)$, where
$$f_2(u, v)=\pi^{k+l}u^p+
\sum_{i=0}^k(-1)^i\dbinom{k+r}{k-i}\dbinom{i+r-1}{i}t^{k-i}\pi^iv^{r+i}.$$
But if $r=1$, then 
\begin{equation*}
\bar{f}_2(u, v)=\bar{t}^k\overline{\dbinom{k+r}{k}}v.
\end{equation*}
Obviously, ${\rm Sing}_{\bar{f}_2}(\mathbb{F}_q)=\emptyset$,
which implies that $S(f_2, \mathcal{O}_K^2)=\emptyset$.
By Lemma \ref{lem 2.2}, we have
$$Z_f(s, \chi, A_2)=\dfrac{L(q^{-s})}{1-q^{-1-s}},$$
where $L(x)$ is a polynomial with complex coefficients 
depending on $\chi$. So when $r=1$, $-\frac{1}{p}-\frac{1}{r}$ 
is never a pole of $f$. This explains why there are only four
candidate poles in the case $r=1$ (see \cite{[YH]}). Likewise,
if $r=l=1$, then the same discussion yields that neither 
$-\frac{1}{p}-\frac{1}{r}$ nor $-\frac{1}{p}-\frac{1}{l}$ is 
a pole of $f$. So there are only three possible poles in that 
case. This coincides with the result in \cite{[LIS]}.

\end{document}